\numberwithin{equation}{section}
\newtheorem{theorem}[subsection]{Theorem}
\newtheorem{corollary}[subsection]{Corollary}
\newtheorem{lemma}[subsection]{Lemma}
\newtheorem{proposition}[subsection]{Proposition}
\theoremstyle{definition}
\newtheorem{definition}[subsection]{Definition}
\newtheorem{notation}[subsection]{Notation}
\newtheorem{remark}[subsection]{Remark}
\newtheorem{construction}[subsection]{Construction}
\newcommand{\bC}{\mathbb{C}}
\newcommand{\bN}{\mathbb{N}}
\newcommand{\bS}{\mathbb{S}}
\newcommand{\bZ}{\mathbb{Z}}
\newcommand{\bF}{\mathbb{F}}
\newcommand{\bm}{\mathbf{m}}
\newcommand{\cC}{\mathcal{C}}
\newcommand{\cJ}{\mathcal{J}}
\newcommand{\cS}{\mathcal{S}}
\newcommand{\longto}{\longrightarrow}
\DeclareMathOperator{\hocolim}{hocolim}
\DeclareMathOperator{\THH}{THH}
\DeclareMathOperator{\cone}{cone}
\DeclareMathOperator{\im}{im}
\DeclareMathOperator{\cok}{cok}
\DeclareMathOperator{\Tor}{Tor}
\renewcommand{\:}{\colon}
\newcommand{\ovl}{\overline}
\newcommand{\ot}{\leftarrow}
\newcommand{\sm}{\wedge}
\newcommand{\iso}{\cong}
\newcommand{\tensor}{\otimes}
\newcommand{\bld}[1]{{\mathbf{#1}}}
\newcommand{\gp}{\mathrm{gp}}
\newcommand{\rep}{\mathrm{rep}}
\newcommand{\cy}{\mathrm{cy}}
\newcommand{\op}{{\mathrm{op}}}
\newcommand{\id}{{\mathrm{id}}}
\newcommand{\Spsym}{{\mathrm{Sp}^{\Sigma}}}
\DeclareMathOperator{\capitalGL}{GL}
\newcommand{\GLoneJof}[1]{\capitalGL^{\cJ}_1\!\!{#1}}
\newcommand{\concat}{\sqcup}
\DeclareMathOperator{\colim}{colim}
\newcommand{\C}[1]{\bC\langle{#1}\rangle}
\newcommand{\postnikovsec}[3]{#1[{#2},{#3}\rangle}
\newcommand{\kup}{ku_{(p)}}
\newcommand{\KUp}{KU_{(p)}}
\newcommand{\AmodmodM}{A/(M_{>0})}
\newcommand{\arxivlink}[1]{\href{http://arxiv.org/abs/#1}{\texttt{arXiv:#1}}}
\title[Logarithmic $\THH$ of topological $K$-theory spectra]{Logarithmic
  topological Hochschild homology\\ of topological $K$-theory spectra}
\author{John Rognes} \address{Department of Mathematics, University of
  Oslo, Box 1053 Blindern, 0316 Oslo,\newline Norway}
\email{rognes@math.uio.no}
\author{Steffen Sagave} \address{IMAPP, Radboud University Nijmegen, PO Box 9010, 6500 GL Nijmegen,\newline The Netherlands} \email{s.sagave@math.ru.nl}
\author{Christian Schlichtkrull}
\address{Department of Mathematics, University of Bergen, PO Box 7803, 5020 Bergen,\newline Norway}
\email{christian.schlichtkrull@math.uib.no}
\date{\today}
\begin{document}
\begin{abstract}
  In this paper we continue our study of logarithmic topological
  Hochschild homology. We show that the inclusion of the connective
  Adams summand~$\ell$ into the $p$-local complex connective
  $K$-theory spectrum~$ku_{(p)}$, equipped with suitable log
  structures, is a formally log $\THH$-{\'e}tale map, and compute the
  $V(1)$-homotopy of their logarithmic topological Hochschild homology
  spectra. As an application, we recover Ausoni's computation of the
  $V(1)$-homotopy of the ordinary $\THH$ of~$ku$.
\end{abstract}

\keywords{Logarithmic ring spectrum, Adams summand, complex topological $K$-theory spectrum}
\subjclass[2010]{55P43; 14F10, 19D55}

\maketitle

\section{Introduction}
Logarithmic topological Hochschild homology (log $\THH$) is an
extension of the usual topological Hochschild homology, which is defined
on more general objects than ordinary rings or ordinary structured
ring spectra. Its input is a \emph{pre-log ring spectrum} $(A,M)$,
consisting of a commutative symmetric ring spectrum $A$ together with certain extra data.
We recall the precise definition below. One reason for considering
this theory is that the log $\THH$ of appropriate pre-log ring spectra
participates in interesting localization homotopy cofiber sequences
that do not exist for ordinary $\THH$. In the first
part~\cite{RSS_LogTHH-I} of this series of papers, we have shown that
there is a localization homotopy cofiber sequence
\begin{equation}\label{eq:loc-sequ-intro} 
\THH(e) \to \THH(e, j_*\!\GLoneJof(E)) \to \Sigma \THH(\postnikovsec{e}{0}{d})
\end{equation}
associated with the connective cover map $j\colon e \to E$ of a
$d$-periodic commutative symmetric ring spectrum $E$. Here $(e,
j_*\!\GLoneJof(E))$ is a pre-log ring spectrum with underlying ring
spectrum $e$, and $ \postnikovsec{e}{0}{d}$ is the $(d-1)$-st
Postnikov section of $e$. Real and complex topological $K$-theory
spectra give rise to examples of this homotopy cofiber sequence.

In the present paper, we compute log $\THH$ in some important
examples. One reason this is interesting is that the homotopy groups
of $\THH(A,M)$ sometimes have a simpler structure than those of
$\THH(A)$. By means of the homotopy cofiber
sequence~\eqref{eq:loc-sequ-intro}, one can then use knowledge about
$\THH(A,M)$ to determine $\THH(A)$.  Specifically, we implement this strategy in the case of the $p$-local complex $K$-theory spectrum $ku_{(p)}$ for an odd prime $p$.  Let $V(1)$ denote the Smith--Toda complex of type 2 (see Notation~\ref{nota:Smith-Toda-complex} below). We shall then first determine the $V(1)$-homotopy of $\THH(ku_{(p)},j_*\!\GLoneJof(KU_{(p)}))$ and based on this compute $V(1)_*\THH(ku_{(p)})$. This realizes the approach to
$V(1)_*\THH(ku)\iso V(1)_*\THH(ku_{(p)})$ outlined by Ausoni
in~\cite{Ausoni_THH-ku}*{\S10} and gives an independent proof of the
main result of~\cite{Ausoni_THH-ku}. One key ingredient for this is
that the tame ramification of the inclusion of the Adams summand $\ell
\to ku_{(p)}$ is detected by log $\THH$. This strategy is motivated by
related results for discrete rings obtained by
Hesselholt--Madsen~\cite{Hesselholt-M_local_fields}*{\S2}. The idea of
extending it to the topological $K$-theory example was first promoted
by Hesselholt.

\subsection{Definition of log \texorpdfstring{$\THH$}{THH}} We briefly recall the definition
of log $\THH$ and refer the reader to~\cite{RSS_LogTHH-I} or Sections
2 to 4 of the present paper for more details, and
to~\cite{RSS_LogTHH-I} and~\cite{Rognes_TLS} for background and
motivation. Let $A$ be a commutative symmetric ring spectrum.  It has
an underlying \emph{graded} multiplicative $E_{\infty}$~space~$\Omega^{\cJ}(A)$.  This object $\Omega^{\cJ}(A)$ is a
\emph{commutative $\cJ$-space monoid} in the sense of~\cite[Section
4]{Sagave-S_diagram}, i.e., a symmetric monoidal functor from a
certain indexing category $\cJ$ to spaces.  A pre-log ring spectrum
$(A,M)$ is a commutative symmetric ring spectrum $A$ together with a
commutative $\cJ$-space monoid $M$ and a map of commutative
$\cJ$-space monoids $\alpha \colon M \to \Omega^{\cJ}(A)$. The
following direct image construction is a source of non-trivial pre-log
ring spectra: If $j\colon e \to E$ is the connective cover map of a
periodic commutative symmetric ring spectrum $E$, then one can form a
diagram of commutative $\cJ$-space monoids
\begin{equation}\label{eq:jGLoneJofE-intro} 
\GLoneJof(E) \to \Omega^{\cJ}(E) \ot \Omega^{\cJ}(e), 
\end{equation}
where $\GLoneJof(E)$ is the commutative $\cJ$-space monoid of
\emph{graded} units associated with the ring spectrum $E$.  The
pullback $j_*\!\GLoneJof(E)$ of~\eqref{eq:jGLoneJofE-intro} comes with a
canonical map to $\Omega^{\cJ}(e)$ and defines a pre-log ring spectrum
$(e, j_*\!\GLoneJof(E))$.

Let $(A,M)$ be a pre-log ring spectrum. By definition, its logarithmic
topological Hochschild homology $\THH(A,M)$ is the homotopy pushout of
the following diagram of cyclic commutative symmetric ring spectra:
\begin{equation}\label{eq:pushout-logTHH-intro} 
 \bS^{\cJ}[B^{\rep}(M)] \ot \bS^{\cJ}[B^{\cy}(M)] \to \THH(A). 
\end{equation} The
right hand term is the ordinary $\THH$ of $A$, given by the cyclic bar
construction on $A$. The middle term is the graded suspension  
spectrum associated with the cyclic bar construction $B^{\cy}(M)$ on
the commutative $\cJ$-space monoid $M$. The right hand map is induced
by the adjoint $\bS^{\cJ}[M] \to A$ of the structure map $\alpha \colon M \to
\Omega^{\cJ}(A)$ of $(A,M)$.  The left hand map is induced by the map
$B^{\cy}(M) \to B^{\rep}(M)$ to the \emph{replete} bar construction
$B^{\rep}(M)$ of $M$. The latter can be viewed as a variant of
$B^{\cy}(M)$ that is formed relative to the group completion of $M$.
\subsection{Log \texorpdfstring{$\THH$}{THH} of the Adams summand}
Let $p$ be an odd prime and let $\ell$ be the Adams summand of the
$p$-local complex connective $K$-theory spectrum $ku_{(p)}$. It is
known that the map $j\colon \ell \to L$ to the periodic version $L$ of
$\ell$ can be represented by a map of commutative symmetric ring
spectra. Hence we can form the pre-log ring spectrum $(\ell,
j_*\!\GLoneJof(L))$. In this case, the homotopy cofiber
sequence~\eqref{eq:loc-sequ-intro} relates $\THH( \ell,
j_*\!\GLoneJof(L))$ to the ordinary $\THH$ of $\ell$ and $H\bZ_{(p)}$.

Writing $E$ and $P$ for exterior and polynomial algebras over $\bF_p$,
respectively, we can formulate our first main result.
\begin{theorem}\label{thm:V(1)THHellGLoneJofL-intro}
  There is an algebra isomorphism
  \[
  V(1)_* \THH(\ell, j_*\!\GLoneJof(L)) \iso E(\lambda_1, d\log v)
  \otimes P(\kappa_1) \,,
  \]
  with $|\lambda_1| = 2p-1$, $|d\log v| = 1$ and $|\kappa_1| =
  2p$. The suspension operator~$\sigma$ arising from the circle action on $\THH(\ell, j_*\!\GLoneJof(L))$ satisfies $\sigma(\kappa_1) = \kappa_1 \cdot
  d\log v$, and is zero on $\lambda_1$ and $d\log v$.
\end{theorem}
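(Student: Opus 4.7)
The plan is to compute $V(1)_*\THH(\ell, j_*\!\GLoneJof(L))$ directly from the pushout definition~\eqref{eq:pushout-logTHH-intro}. The main input is McClure--Staffeldt's computation $V(1)_*\THH(\ell) \iso E(\lambda_1, \lambda_2) \otimes P(\mu_1)$ (with $|\lambda_1|=2p-1$, $|\lambda_2|=2p^2-1$, $|\mu_1|=2p^2$), together with an analysis of the commutative $\cJ$-space monoid $M = j_*\!\GLoneJof(L)$ and its cyclic and replete bar constructions. Since log THH is the homotopy pushout
\[
\bS^{\cJ}[B^{\rep}(M)] \longleftarrow \bS^{\cJ}[B^{\cy}(M)] \longto \THH(\ell),
\]
I would compute $V(1)_*$ of each corner and run a Tor/Bar spectral sequence to assemble the result. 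As a sanity check on the final additive answer, one can smash the localization cofiber sequence~\eqref{eq:loc-sequ-intro} with $V(1)$ and compare against B\"okstedt's computation of $V(1)_*\THH(H\bZ_{(p)})$, using that $\postnikovsec{\ell}{0}{2p-2} \simeq H\bZ_{(p)}$.

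To identify the generators, I would proceed as follows. The class $\lambda_1$ is inherited from $V(1)_*\THH(\ell)$ via the right-hand map of the pushout. The degree-$1$ class $d\log v$ arises from the Bott element $v\in\pi_{2p-2}L$: since $L$ is periodic, the group completion of $M$ incorporates $v$, and the comparison $B^{\cy}(M)\to B^{\rep}(M)$ produces a corresponding degree-$1$ loop class representing the formal logarithmic differential of $v$. The polynomial generator $\kappa_1$ should arise as a $p$-th root of $\mu_1$ in the pushout: the left-hand map should identify a suitable power of $\kappa_1$ with a unit multiple of $\mu_1$, so that the periodicity classes $\lambda_2$ and $\mu_1$ disappear from the final answer, leaving only the simpler generators $d\log v$ and $\kappa_1$.

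The main obstacles are the precise Tor spectral sequence computation and the verification of the Connes operator formulae. For the first, one must show that repletion ``divides out'' $\mu_1$ and $\lambda_2$ in favor of $\kappa_1$ and $d\log v$; this should reflect the formal log-\'etaleness (in the sense of the paper) of the inclusion of the log structure. For the second, $\sigma$ is a graded derivation for the ring structure of log THH, $\sigma(\lambda_1)=0$ is inherited from McClure--Staffeldt, and $\sigma(d\log v)=0$ follows because $d\log v$ lies in degree~$1$ and is the image of a class in $B^{\rep}(M)$ on which the Connes operator acts trivially by construction of the replete bar construction. The relation $\sigma(\kappa_1)=\kappa_1\cdot d\log v$ is then a topological realization of the classical logarithmic derivative identity $d(v^n)/v^n = n\,d\log v$, and should follow from naturality of $\sigma$ together with the explicit description of $\kappa_1$ in terms of $v$ via the log structure.
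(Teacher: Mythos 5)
Your high-level plan — compute each corner of the pushout and run a Tor spectral sequence, feeding in McClure–Staffeldt's $V(1)_*\THH(\ell)$ and B\"okstedt's $V(1)_*\THH(\bZ_{(p)})$ — matches the shape of the paper's argument, but the proposal leaves two genuine gaps unfilled.

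First, you do not address how to actually compute the $E^2$-term. You propose to analyze $M=j_*\!\GLoneJof(L)$ directly, but the homotopy type of $(j_*\!\GLoneJof(L))_{h\cJ}$ is not simple (its $\cJ$-degree~$0$ part depends on $\capitalGL_1(\bZ_{(p)})$); the paper first invokes invariance of log $\THH$ under logification (Proposition~\ref{prop:logification-of-D-of-x}) to replace $j_*\!\GLoneJof(L)$ by $D(v)$, whose associated space is $Q_{\ge0}S^0$. This replacement, combined with the graded Thom isomorphism (Propositions~\ref{prop:graded-thom} and~\ref{prop:homology-B^cyDx}), is what gives the computable descriptions $H_\circledast(B^{\cy}(D(v))_{h\cJ})\cong P(v)\otimes E(dv)\otimes C_*$ and $H_\circledast(B^{\rep}(D(v))_{h\cJ})\cong P(v)\otimes E(d\log v)\otimes C_*$. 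Without this step your $E^2$-term is not accessible.

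Second, and more seriously, you describe the B\"okstedt comparison as a ``sanity check,'' whereas it is actually the engine that determines the differentials. Your $E^2$-term is roughly $E(\lambda_1,\lambda_2)\otimes P(\mu_2)\otimes E(d\log v)\otimes\Gamma([dv])$ (note: your ``$\mu_1$'' of degree $2p^2$ should be $\mu_2$; in the paper $\mu_1$ lives in degree $2p$ in $V(1)_*\THH(\bZ_{(p)})$). The spectral sequence does \emph{not} collapse: the divided power generators $\gamma_{p^i}[dv]$ support $d^p$-differentials hitting $\lambda_2\cdot\gamma_{p^i-p}[dv]$, and there is a multiplicative extension $[dv]^p\doteq\mu_2$. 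You offer no mechanism for establishing either. The paper's mechanism is the chain of three Tor spectral sequences with $E^2$-terms comparing the abutments $V(1)_*\THH(\ell,D(v))\to V(1)_*(\THH(\ell)\sm_{\bS^\cJ[B^{\cy}(D(v))]}\bS^\cJ[B^{\cy}_{\{0\}}(D(v)^{\gp})])\leftarrow V(1)_*\THH(\bZ_{(p)})$: the differentials and extensions are first deduced in the $\bZ_{(p)}$-case by counting against the known answer $E(\epsilon_1,\lambda_1)\otimes P(\mu_1)$ (Proposition~\ref{prop:thhz}), then lifted along the chain using that the maps of $E^2$-terms are injective. Your phrase that formal log $\THH$-\'etaleness should make repletion ``divide out $\lambda_2$ and $\mu_1$'' is not how this works --- \'etaleness enters only when passing from $\ell$ to $ku_{(p)}$, not in determining this spectral sequence. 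Finally, your reasoning for $\sigma(\kappa_1)=\kappa_1\cdot d\log v$ via ``the explicit description of $\kappa_1$ in terms of $v$'' is too vague; the paper's proof (Proposition~\ref{prop:circle-operator-values}) derives it by naturality from the $ku_{(p)}$-case, which in turn uses that Ausoni's class $b\in V(1)_*K(ku)$ hits $b_1$ under the trace, forcing $\sigma(b_1)=0$, and then applies the Leibniz rule to $\rho'_*(b_1)=u\kappa_1$.
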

The strategy for the proof of
Theorem~\ref{thm:V(1)THHellGLoneJofL-intro} is as follows.  In a first
step, we use the invariance of log $\THH$ under logification
established in \cite[Theorem~4.24]{RSS_LogTHH-I} to replace
$(\ell, j_*\!\GLoneJof(L))$ by a pre-log ring spectrum $(\ell, D(v))$
with equivalent log $\THH$. This $(\ell, D(v))$ was also considered
in~\cite{Sagave_log-on-k-theory}. Its advantage is that the
$E_{\infty}$ space $\hocolim_{\cJ}D(v)$ associated with $D(v)$ is
equivalent to $Q_{\geq 0}S^0$, the non-negative components of $QS^0
= \Omega^{\infty}\Sigma^{\infty}S^0$. So the homology of
$\hocolim_{\cJ}D(v)$ is well understood and independent of
$\ell$. Using the graded Thom isomorphism established by the last two
authors in~\cite{Sagave-S_graded-Thom}, this allows us to determine
the homology of $\bS^{\cJ}[D(v)]$, $\bS^{\cJ}[B^{\cy}(D(v))]$, and
$\bS^{\cJ}[B^{\rep}(D(v))]$. Combining this with the computation of
$V(1)_* \THH(\ell)$ by McClure and Staffeldt~\cite{McClure-S_thh-bu},
an application of the K{\"u}nneth spectral sequence associated with
the homotopy pushout~\eqref{eq:pushout-logTHH-intro} leads to our
computation of $V(1)_* \THH(\ell, j_*\!\GLoneJof(L))$.

\subsection{The inclusion of the Adams summand}
In analogy with the notion of \emph{formally $\THH$-{\'e}tale} maps
in~\cite[\S 9.2]{Rognes_Galois}, we say that $(A,M) \to (B,N)$ is a
\emph{formally log $\THH$-{\'e}tale} map of pre-log ring spectra if
$B\sm_A \THH(A,M) \to \THH(B,N) $ is a stable equivalence. Our second
main theorem verifies this property in an example:
\begin{theorem}\label{thm:ell-ku-direct-image-logthh-etale-intro}
The inclusion of the Adams summand $\ell \to ku_{(p)}$ induces a
formally log $\THH$-{\'e}tale map $(\ell, j_*\!\GLoneJof(L)) \to
(ku_{(p)}, j_*\!\GLoneJof(KU_{(p)}))$.
\end{theorem}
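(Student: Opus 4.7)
The plan is to show the natural base-change morphism
\[ \kup \sm_{\ell} \THH(\ell, j_*\!\GLoneJof(L)) \longto \THH(\kup, j_*\!\GLoneJof(\KUp)) \]
is a stable equivalence by computing both sides on $V(1)$-homotopy and then upgrading to a stable equivalence.

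First, I would simplify both pre-log ring spectra using logification invariance of log~$\THH$ from~\cite[Theorem~4.24]{RSS_LogTHH-I}: replace $(\ell, j_*\!\GLoneJof(L))$ by $(\ell, D(v))$ as in the proof of Theorem~\ref{thm:V(1)THHellGLoneJofL-intro}, and analogously replace $(\kup, j_*\!\GLoneJof(\KUp))$ by $(\kup, D(u))$, where $u \in \pi_2\kup$ is the Bott element and $D(u)$ is the commutative $\cJ$-space monoid of~\cite{Sagave_log-on-k-theory}. Both $\hocolim_{\cJ}D(v)$ and $\hocolim_{\cJ}D(u)$ are equivalent to $Q_{\geq 0}S^0$, and the inclusion of the Adams summand lifts to a map $(\ell, D(v)) \to (\kup, D(u))$ corresponding to $v \mapsto u^{p-1}$.

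Next I would compute $V(1)_*\THH(\kup, D(u))$ by mimicking the strategy outlined after Theorem~\ref{thm:V(1)THHellGLoneJofL-intro}: the graded Thom isomorphism of~\cite{Sagave-S_graded-Thom} gives access to the homology of $\bS^{\cJ}[D(u)]$, $\bS^{\cJ}[B^{\cy}D(u)]$ and $\bS^{\cJ}[B^{\rep}D(u)]$, which feeds together with Ausoni's computation of $V(1)_*\THH(\kup)$~\cite{Ausoni_THH-ku} into the K{\"u}nneth spectral sequence for the homotopy pushout~\eqref{eq:pushout-logTHH-intro}. Separately, since $\kup$ is free of rank $p-1$ as an $\ell$-module on $1, u, \ldots, u^{p-2}$, a K{\"u}nneth argument combined with Theorem~\ref{thm:V(1)THHellGLoneJofL-intro} computes $V(1)_*$ of the domain of the base-change morphism. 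Tracking algebra generators through the map, in particular the identification $d\log v \mapsto (p-1)\cdot d\log u$ and the matching of $\kappa_1$, $\lambda_1$ and the $\sigma$-operator across both presentations, would establish that the base-change morphism is a $V(1)$-equivalence.

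The main obstacle is ensuring that the $V(1)$-equivalence can be upgraded to a genuine stable equivalence. Both spectra are connective and $p$-local of finite type, so one option is to refine the computation to an $H\bF_p$-equivalence (essentially a $V(0)$-equivalence), from which a $p$-local stable equivalence follows by a standard bounded-below Bockstein argument. Alternatively, and perhaps more cleanly, if the entire pushout diagram defining log $\THH$ can be shown to base-change strictly along $\bS^{\cJ}[D(v)] \to \bS^{\cJ}[D(u)]$ when smashed with $\kup$ over $\ell$, treating the pre-log extension as a Kummer-type tame extension adjoining a $(p-1)$-st root of $v$, then the stable equivalence would follow directly without recourse to $V(1)$-homotopy.
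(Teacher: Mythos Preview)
Your primary strategy---computing $V(1)_*\THH(\kup, D(u))$ via the K{\"u}nneth spectral sequence with Ausoni's $V(1)_*\THH(\kup)$ as input, then comparing---faces a real obstacle. The $E^2$-term is $\Tor^{E(du)}_{**}(E(\lambda_1)\otimes\Theta_*, E(d\log u))$ with $\Theta_*$ Ausoni's algebra, and this is genuinely complicated: there are infinite families of classes in $\Gamma([du])$ and nontrivial $d^2$-differentials to determine. The paper in fact runs this spectral sequence (in Section~\ref{sec:log-THH-ku}), but only \emph{after} the {\'e}tale theorem is already established, using the known abutment $P_{p-1}(u)\otimes E(\lambda_1,d\log u)\otimes P(\kappa_1)$ to pin down the differentials and even to resolve a key multiplicative question ($du\cdot z \doteq \lambda_2$ versus $du\cdot z = 0$). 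Without the abutment in hand it is not clear you can complete this computation, and you would still have to check that the \emph{map} induces an isomorphism and upgrade beyond $V(1)$-homotopy.

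The paper's proof is structural and avoids all of this. After the same reduction to $(\ell,D(v))\to(\kup,D(u))$, it shows directly that each face of the cube built from the defining pushouts for log $\THH$ is homotopy cocartesian. The key inputs are: (i)~Proposition~\ref{prop:Dv-Du-ell-ku-hty-cocartesian}, that $\bS^\cJ[D(v)]\to\bS^\cJ[D(u)]$ and $\ell\to\kup$ form a homotopy pushout (proved via the graded Thom isomorphism and Lemma~\ref{lem:free-p-local-homology}); and (ii)~Proposition~\ref{prop:M-N-rep-homotopy-cocartesian}, which reduces the $B^{\rep}$-square to checking that $B(D(v)_{h\cJ})\to B(D(u)_{h\cJ})$ is a $\bZ_{(p)}$-homology equivalence. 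The latter holds because this map is multiplication by $p-1$ on homotopy groups (Lemma~\ref{lem:gp-compl-map-BJ-map}), hence a $\bZ_{(p)}$-equivalence. No $V(1)$-computation enters; the paper then \emph{uses} the {\'e}tale result to compute $V(1)_*\THH(\kup,D(u))$ and from there re-derive Ausoni's theorem. Your closing ``alternative'' paragraph is essentially gesturing at this structural argument---that is the one to pursue.
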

Here $KU_{(p)}$ is the periodic version of $ku_{(p)}$, and
$j_*\!\GLoneJof(KU_{(p)})$ denotes the direct image pre-log structure
on $ku_{(p)}$, constructed as above. We stress that the proof of this
theorem does not depend on computations of $ \THH(\ell,
j_*\!\GLoneJof(L)) $ and $\THH(\kup,j_*\!\GLoneJof(KU_{(p)}))$. Instead,
it is based on a certain decomposition of the replete bar construction,
and on the graded Thom isomorphism and the invariance of log $\THH$
under logification mentioned above.

It is shown in~\cite[Theorem 1.6]{Sagave_log-on-k-theory} that $(\ell,
j_*\!\GLoneJof(L)) \to (ku_{(p)}, j_*\!\GLoneJof(KU_{(p)}))$ is also
formally \'{e}tale with respect to logarithmic topological
Andr\'{e}-Quillen homology. Since the logarithmic K{\"a}hler
differentials of algebraic geometry can be used to measure
ramification beyond \emph{tame} ramification of discrete valuation
rings, these results show that $\ell \to ku_{(p)}$ behaves as a tamely
ramified extension of ring spectra. By analogy with Emmy Noether's
correspondence between tame ramification and the existence of normal
bases, these results are compatible with the fact that $ku_{(p)}$ is a
retract of a finite cell $\ell[\Delta]$-module spectrum, where $\Delta
= (\bZ/p)^\times$ is the Galois group of $L \to KU_{(p)}$.

\subsection{\texorpdfstring{$\THH$}{THH} of the connective complex \texorpdfstring{$K$}{K}-theory spectrum}
Combining the last two theorems leads to the following result. Here $P_{p-1}$ denotes a height $p-1$ truncated polynomial algebra. 
\begin{theorem}\label{thm:V(1)THHkuGLoneJofKU-intro}
There is an algebra isomorphism
\[
V(1)_* \THH(\kup,j_*\!\GLoneJof(KU_{(p)})) \cong P_{p-1}(u) \otimes E(\lambda_1, d\log u)
        \otimes P(\kappa_1)
\]
with $|u| = 2$, $|\lambda_1| = 2p-1$, $|d\log u| = 1$ and $|\kappa_1|
= 2p$. The suspension operator~$\sigma$ arising from the circle action
on $\THH(\kup,j_*\!\GLoneJof(KU_{(p)}))$ satisfies $\sigma(u) = {u
\cdot d\log u}$ and $\sigma(\kappa_1) = -\kappa_1 \cdot d\log u$, and
is zero on $\lambda_1$ and $d\log u$.
\end{theorem}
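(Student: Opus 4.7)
The plan is to combine the two previous theorems directly. By Theorem~\ref{thm:ell-ku-direct-image-logthh-etale-intro}, the canonical map
\[
\kup \sm_\ell \THH(\ell, j_*\!\GLoneJof(L)) \longto \THH(\kup,j_*\!\GLoneJof(KU_{(p)}))
\]
is a stable equivalence, so it suffices to compute the $V(1)$-homotopy of the left hand side. The Adams summand inclusion makes $\kup$ into a free $\ell$-module on the Bott classes $1, u, u^2, \ldots, u^{p-2}$, giving $V(1)_* \kup \iso P_{p-1}(u)$ and $V(1)_* \ell \iso \bF_p$. A degenerate K{\"u}nneth spectral sequence for the $\ell$-relative smash product, combined with Theorem~\ref{thm:V(1)THHellGLoneJofL-intro}, then produces the isomorphism
\[
V(1)_* \THH(\kup, j_*\!\GLoneJof(KU_{(p)})) \iso P_{p-1}(u) \tensor E(\lambda_1, d\log v) \tensor P(\kappa_1).
\]

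The next step is to replace $d\log v$ by $d\log u$. Since $v$ agrees with $u^{p-1}$ up to a unit in $\pi_*\kup$, the multiplicativity of $d\log$ on homotopy units gives $d\log v = (p-1)\, d\log u = -d\log u$ in $V(1)$-coefficients, so the exterior subalgebras $E(\lambda_1, d\log v)$ and $E(\lambda_1, d\log u)$ coincide and the stated algebra structure follows.

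For the suspension operator, naturality of $\sigma$ under the map $\THH(\ell, j_*\!\GLoneJof(L)) \to \THH(\kup, j_*\!\GLoneJof(KU_{(p)}))$ together with Theorem~\ref{thm:V(1)THHellGLoneJofL-intro} and the identification $d\log v = -d\log u$ yields $\sigma(\lambda_1) = 0$, $\sigma(d\log u) = 0$, and $\sigma(\kappa_1) = -\kappa_1 \cdot d\log u$. The remaining formula $\sigma(u) = u \cdot d\log u$ is the characteristic Leibniz-type relation for $d\log$ in log $\THH$: the element $d\log u$ is constructed, via the pushout~\eqref{eq:pushout-logTHH-intro}, precisely so that the $S^1$-action applied to the image of $u$ in the log structure produces $u \cdot d\log u$. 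This last identity is where I expect the main obstacle. Making it precise requires tracing $u$ through the cyclic and replete bar constructions of the pre-log structure $j_*\!\GLoneJof(KU_{(p)})$ and verifying that the cyclic structure on $B^{\cy}(M)$ and the map $B^{\cy}(M) \to B^{\rep}(M)$ indeed give the claimed factorization of $\sigma$ through $d\log$ in $V(1)$-homotopy. The other equalities reduce to algebraic bookkeeping via naturality and the freeness of $\kup$ over $\ell$.
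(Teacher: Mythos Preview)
Your approach to the algebra isomorphism is essentially the paper's: the Tor/K{\"u}nneth spectral sequence arising from the log $\THH$-{\'e}tale equivalence of Theorem~\ref{thm:ell-ku-direct-image-logthh-etale-intro} collapses because $\pi_*\kup$ is free over $\pi_*\ell$, and the identification $d\log v = -d\log u$ is made precise by chasing $d\log v$ through $f^\flat_\circledast \colon H_\circledast(B^{\rep}(D(v))_{h\cJ}) \to H_\circledast(B^{\rep}(D(u))_{h\cJ})$ and the adjoint pre-log structure maps (see Theorem~\ref{thm:thhkudu}).

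For the suspension operator there are two points worth flagging. First, your derivation of $\sigma(\kappa_1) = -\kappa_1 \cdot d\log u$ by naturality from Theorem~\ref{thm:V(1)THHellGLoneJofL-intro} is logically valid, but the paper actually runs the naturality the other way: the formula $\sigma(\kappa_1) = \kappa_1 \cdot d\log v$ stated in Theorem~\ref{thm:V(1)THHellGLoneJofL-intro} is itself deduced in Proposition~\ref{prop:circle-operator-values}(ii) from the $\kup$ case, which in turn rests on an external input---Ausoni's class $b \in V(1)_{2p+2}K(ku)$ hits $b_1$ under the trace, forcing $\sigma(b_1)=0$ and hence $\sigma(u\kappa_1)=0$. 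So if you take Theorem~\ref{thm:V(1)THHellGLoneJofL-intro} as a black box your argument is fine, but be aware that its $\sigma(\kappa_1)$ clause is not proved in Section~\ref{sec:log-THH-ell}; it comes from the $\kup$ side.

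Second, you correctly identify $\sigma(u) = u\cdot d\log u$ as the substantive remaining point, and your heuristic is on the right track. The paper makes it precise not for $j_*\!\GLoneJof(KU_{(p)})$ but for the equivalent pre-log structure $D(u)$: Proposition~\ref{prop:homology-B^cyDx} already establishes $\sigma(u) = u \cdot d\log u$ and $\sigma(d\log u)=0$ in $H_\circledast(B^{\rep}(D(u))_{h\cJ})$, and the proof of Theorem~\ref{thm:thhkudu} transfers these to $V(1)_*\THH(\kup,D(u))$ via the $(2p-3)$-connected map $V(1)\to H$, which makes the graded Thom isomorphism determine the relevant low-degree $V(1)$-classes. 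The explicit combinatorial model $D(u)$ is what makes this tractable; tracing through $j_*\!\GLoneJof(KU_{(p)})$ directly, as you suggest, would be harder to control.
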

In logarithmic algebraic geometry, the passage from K{\"a}hler
differentials to logarithmic K{\"a}hler differentials allows one to have
differentials with logarithmic poles, i.e., it introduces elements
$d\log x$ satisfying $x\cdot d\log x = dx$. By analogy with the
Hochschild--Kostant--Rosenberg correspondence between $(\mathrm{HH},\sigma)$
and $(\Omega,d)$, one may expect similar phenomena for logarithmic
$\THH$. In view of this, the above relation $\sigma(u) = u \cdot d\log u$
is a justification for denoting the relevant homotopy class by $d\log
u$.

Using the homotopy cofiber sequence~\eqref{eq:loc-sequ-intro}, the
previous theorem allows us to recover Ausoni's computation of the
rather complicated finitely presented $\bF_p$-algebra
$V(1)_*\THH(ku_{(p)})$~\cite{Ausoni_THH-ku}. For this application of
Theorem~\ref{thm:V(1)THHkuGLoneJofKU-intro} it is important that
already in the case of the Adams summand, the explicit definition of
logarithmic $\THH$ allows us to determine the homomorphisms in the
long exact sequence of $V(1)$-homotopy groups induced
by~\eqref{eq:loc-sequ-intro}. It is not clear if the construction of a
localization homotopy cofiber sequence for $\THH$ by
Blumberg--Mandell~\cite{Blumberg-M_loc-sequenceTHH} provides such an
explicit understanding of the resulting long exact
sequence. Nonetheless, we expect our sequence to be equivalent to
theirs, in the special cases they consider. If true, this would be one
way to relate our homotopy cofiber sequence~\eqref{eq:loc-sequ-intro}
to the corresponding $K$-theoretical localization sequence.

\subsection{Organization}
In Section~\ref{sec:cyclic-bar} we briefly review commutative
$\cJ$-space monoids and their cyclic bar construction and prove a
decomposition result for the cyclic bar construction of grouplike
commutative $\cJ$-space monoids. In Section~\ref{sec:rep-bar} we
review the replete bar construction and prove a similar decomposition
formula. In Section~\ref{sec:log-thh} we briefly recall the definition
of log $\THH$. In Section~\ref{sec:graded-Thom} we explain how the
graded Thom isomorphism established in~\cite{Sagave-S_graded-Thom} can
be used to compute the homology of $\bS^{\cJ}[M]$ for certain commutative
$\cJ$-space monoids $M$.  Section~\ref{sec:elog-etaleness} contains
the proof of
Theorem~\ref{thm:ell-ku-direct-image-logthh-etale-intro}. In
Section~\ref{sec:log-THH-ell} we compute the $V(1)$-homotopy of the
log $\THH$ of the Adams summand and prove
Theorem~\ref{thm:V(1)THHellGLoneJofL-intro}. In the final
Section~\ref{sec:log-THH-ku} we prove
Theorem~\ref{thm:V(1)THHkuGLoneJofKU-intro} about the log $\THH$ of
$\kup$ and explain how to use this for computing the $V(1)$-homotopy
of $\THH(ku_{(p)})$.
\subsection{Acknowledgments} The authors would like to thank the
  referee for useful comments.

\section{The cyclic bar construction for commutative \texorpdfstring{$\cJ$}{J}-space monoids}\label{sec:cyclic-bar}
We briefly recall some terminology that is needed to state the definition of
logarithmic $\THH$ in Section~\ref{sec:log-thh}. More details on these
foundations can be found in~\cite[Section~4]{Sagave-S_diagram}.

\subsection{\texorpdfstring{$\cJ$}{J}-spaces}
Let $\cJ$ be the category given by Quillen's localization construction
on the permutative category $\Sigma$ of finite sets and bijections. It
is a symmetric monoidal category whose classifying space $B\cJ$ is
weakly equivalent to $QS^0$. The objects of $\cJ$ are pairs
$(\bld{m_1},\bld{m_2})$ of finite sets of the form $\bld{m_i} =
\{1,\dots, m_i\}$, where each $m_i \geq 0$.  A \emph{$\cJ$-space} is a
functor from $\cJ$ to simplicial sets. For each object
$(\bld{m_1},\bld{m_2})$ of $\cJ$ there is a functor from the category
$\cS$ of simplicial sets to $\cJ$-spaces \[F_{(\bld m_1,\bld
  m_2)}^{\cJ}\colon \cS \to \cS^{\cJ}, \qquad K \mapsto \cJ((\bld
m_1,\bld m_2),-) \times K,\] which is left adjoint to the evaluation of
a $\cJ$-space at $(\bld{m_1},\bld{m_2})$.

The category of $\cJ$-spaces admits a
Day type convolution product $\boxtimes$ induced by the ordered
concatenation of finite sets and the cartesian product of simplicial
sets. The unit $U^{\cJ}$ of this symmetric monoidal product is the
functor $\cJ((\bld{0},\bld{0}),-)$, corepresented by the monoidal unit
$(\bld{0},\bld{0})$ of $\cJ$. A \emph{commutative $\cJ$-space monoid}
is a commutative monoid object in $(\cS^{\cJ},\boxtimes,U^{\cJ})$, and
we write $\cC\cS^{\cJ}$ for the category of commutative $\cJ$-space
monoids.

The category $\cC\cS^{\cJ}$ admits a proper simplicial positive
projective model structure where $M \to N$ is a weak equivalence if it
induces a weak equivalence of spaces $M_{h\cJ} \to
N_{h\cJ}$~\cite[Proposition 4.10]{Sagave-S_diagram}. Here $M_{h\cJ} =
\hocolim_{\cJ} M$ denotes the Bousfield--Kan homotopy colimit of $M$
over $\cJ$, which is an associative (but not commutative)
simplicial monoid.  In the following we will refer to this model
structure as the \emph{positive $\cJ$-model} structure and call its
weak equivalences the \emph{$\cJ$-equivalences}. Unless otherwise
stated, the notions of cofibrations or fibrations in $\cC\cS^{\cJ}$
will refer to this model structure. Equipped with the positive
$\cJ$-model structure, $\cC\cS^{\cJ}$ is Quillen equivalent
to the category of $E_{\infty}$ spaces over $B\cJ$.  We therefore
think of commutative $\cJ$-space monoids as ($QS^0$-)graded
$E_{\infty}$ spaces. The category $\cJ$ is closely related to
symmetric spectra. In particular, there is a Quillen adjunction
\[ 
\bS^{\cJ} \colon \cC\cS^{\cJ} \rightleftarrows \cC\Spsym \colon \Omega^{\cJ}
\] 
relating $\cC\cS^{\cJ}$ to the category of commutative symmetric ring
spectra $\cC\Spsym$ with the positive projective stable model
structure.  If $A$ is a commutative symmetric ring spectrum, we view
$\Omega^{\cJ}(A)$ as a model for the underlying graded multiplicative
$E_{\infty}$ space of $A$.  We say that a commutative $\cJ$-space
monoid $M$ is \emph{grouplike} if the monoid $\pi_0(M_{h\cJ})$ is a
group. If $A$ is positive fibrant, then $\Omega^{\cJ}(A)$ has a
subobject $\GLoneJof(A)$ of \emph{graded units} such that inclusion
$\GLoneJof(A) \to \Omega^{\cJ}(A)$ corresponds to the inclusion
$\pi_*(A)^{\times} \to \pi_*(A)$ of the units of the graded ring of
stable homotopy groups of $A$.
\subsection{The cyclic bar construction}
Let $M$ be a commutative $\cJ$-space monoid. The \emph{cyclic bar
  construction} $B^{\cy}(M)$ of $M$ is the realization of a
simplicial object $[q] \mapsto M^{\boxtimes (q+1)}$.  Its structure maps
are defined using the unit and the multiplication of $M$ and the twist
isomorphism for the symmetric monoidal product $\boxtimes$\,;
see~\cite[Section~3]{RSS_LogTHH-I} for
details. The object $B^{\cy}(M)$ will be one of the three building
blocks of log $\THH$. We note that since $M$ is
commutative, the iterated multiplication maps induce a natural
augmentation $\epsilon\colon B^{\cy}(M)\to M$.

Our first goal is to decompose $B^{\cy}(M)$ as a coproduct of
commutative $\cJ$-space monoids, in the case when $M$ is
grouplike. For this we fix a factorization of
the unit of $M$ into an acyclic cofibration followed by a positive
$\cJ$-fibration as indicated in the bottom row of the diagram
\[
\xymatrix@-1pc{
&& V(M) \ar@{->>}[rr] \ar[d]&& B^{\cy}(M)\ar[d]^{\epsilon}\\
U^{\cJ} \ar@{>->}[rr]^-{\sim} &&U(M)\ar@{->>}[rr] && M. 
}
\]
We define $V(M)$ to be the pullback of the augmentation
$\epsilon\colon B^{\cy}(M)\to M$ and $U(M) \to M$. It is a model for
the homotopy fiber of the augmentation over the unit. Using the
multiplicative structure of $B^{\cy}(M)$ we get a map of commutative
$\cJ$-space monoids
\[
M\boxtimes V(M) \to B^{\cy}(M)\boxtimes B^{\cy}(M) \to B^{\cy}(M).
\]
\begin{proposition}\label{prop:grouplike-M-VM-equivalence}
The map $M\boxtimes V(M)\to B^{\cy}(M)$ is a $\cJ$-equivalence provided that $M$ is grouplike and cofibrant.
\end{proposition}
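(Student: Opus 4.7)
The strategy is to reduce to the classical fact that, for a grouplike commutative monoid $N$ in simplicial sets, there is a natural weak equivalence $N \times F \xrightarrow{\sim} B^{\cy}(N)$, where $F$ is the homotopy fiber of the augmentation $\epsilon \colon B^{\cy}(N) \to N$ over the unit component; this equivalence is constructed by using the multiplication of $N$ on $B^{\cy}(N)$ to trivialize the principal fibration $\epsilon$. I would verify the $\cJ$-space analog by passing to underlying $E_\infty$-spaces via the functor $(-)_{h\cJ}$.

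The first step is to observe that, since $U(M) \twoheadrightarrow M$ is a positive $\cJ$-fibration, the strict pullback defining $V(M)$ is also a homotopy pullback, and, because $U(M)$ is $\cJ$-equivalent to the monoidal unit $U^{\cJ}$, the space $V(M)_{h\cJ}$ models the homotopy fiber of $\epsilon_{h\cJ}\colon B^{\cy}(M)_{h\cJ} \to M_{h\cJ}$ over the image of the basepoint coming from $(U^{\cJ})_{h\cJ}$. Next I would exploit two compatibilities of $(-)_{h\cJ}$. First, it sends $\boxtimes$ of cofibrant commutative $\cJ$-space monoids to the cartesian product of underlying spaces up to weak equivalence, as follows from the Quillen equivalence between $\cC\cS^{\cJ}$ and $E_\infty$-spaces over $B\cJ$ of \cite[Section~4]{Sagave-S_diagram}; hence $(M \boxtimes V(M))_{h\cJ} \simeq M_{h\cJ} \times V(M)_{h\cJ}$. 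Second, $(-)_{h\cJ}$ commutes with geometric realization, so $B^{\cy}(M)_{h\cJ}$ is equivalent to the classical cyclic bar construction of the underlying grouplike $E_\infty$-space $M_{h\cJ}$. Unwinding the construction of the map $M \boxtimes V(M) \to B^{\cy}(M)$ from the multiplication on $B^{\cy}(M)$ identifies its underlying map with the classical trivialization above, which is a weak equivalence by the grouplike hypothesis on $M_{h\cJ}$.

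The main obstacle will be the homotopical bookkeeping: verifying that $V(M)$ is cofibrant enough for the monoidal-to-cartesian comparison to apply, that the strict pullback in $\cC\cS^{\cJ}$ computes the homotopy pullback (which is where properness of the positive $\cJ$-model structure plays a role), and that the constructed map indeed agrees, on underlying spaces, with the classical trivialization of the principal fibration $B^{\cy}(M_{h\cJ}) \to M_{h\cJ}$. Once these compatibilities are in place, the classical splitting of $B^{\cy}(N)$ for grouplike commutative simplicial monoids $N$ yields the desired $\cJ$-equivalence.
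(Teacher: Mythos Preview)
Your overall strategy---pass to underlying spaces via $(-)_{h\cJ}$ and invoke the classical splitting of $B^{\cy}$ for a grouplike commutative monoid---is close in spirit to the paper's proof, but there is a genuine gap in the reduction step. You want to apply the classical fact to $N = M_{h\cJ}$, but $M_{h\cJ}$ is only an \emph{associative} simplicial monoid, not a strictly commutative one (it is an $E_\infty$-space, as you note, but that is not the same thing). For a non-commutative associative monoid $N$, the iterated multiplication $N^{q+1} \to N$ does not commute with the last face map $d_q$ of the cyclic bar construction, so there is no strict augmentation $\epsilon\colon B^{\cy}(N) \to N$; for the same reason, the left $N$-action on $B^{\cy}(N)$ via the first coordinate fails to be simplicial. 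Hence the ``classical trivialization'' you invoke does not exist for $N = M_{h\cJ}$ as stated, and the identification you propose cannot be made at the level of strict simplicial maps.

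The paper circumvents this by keeping the multiplicative structure at the $\cJ$-space level, where $M$ is genuinely commutative: both the augmentation $\epsilon\colon B^{\cy}(M) \to M$ and the $M$-action on $B^{\cy}(M)$ are defined there, and one only applies $(-)_{h\cJ}$ at the end to obtain the map of spaces $M_{h\cJ}\times V(M)_{h\cJ} \to B^{\cy}(M)_{h\cJ}$. The argument then uses that this map is $M_{h\cJ}$-equivariant (equivariance being inherited from the $\cJ$-space level), that $V(M)_{h\cJ}$ is path connected, and that the unit inclusion $M_{h\cJ} \to B^{\cy}(M)_{h\cJ}$ is a $\pi_0$-isomorphism; since $M_{h\cJ}$ is grouplike it suffices to check the weak equivalence on the unit components, where a comparison of horizontal homotopy fiber sequences finishes the job. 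This is essentially the principal-fibration argument you have in mind, but carried out directly on $B^{\cy}(M)_{h\cJ}$ rather than on $B^{\cy}(M_{h\cJ})$. Your sketch can be repaired along these lines; the key correction is that the augmentation and the action must be imported from the commutative $\cJ$-space structure on $M$, not constructed on $M_{h\cJ}$ itself.
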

\begin{proof}
Consider the commutative diagram of homotopy colimits
\[
\xymatrix@-1pc{
V(M)_{h\cJ} \ar[r] \ar@{=}[d]& M_{h\cJ}\times V(M)_{h\cJ} \ar[r] \ar[d]_{\sim}& 
M_{h\cJ}\times U(M)_{h\cJ}\ar[d]_{\sim}\\
V(M)_{h\cJ} \ar[r] \ar@{=}[d] & (M\boxtimes V(M))_{h\cJ} \ar[r]\ar[d] & (M\boxtimes U(M))_{h\cJ}\ar[d]_{\sim}\\ 
V(M)_{h\cJ} \ar[r] & B^{\cy}(M)_{h\cJ} \ar[r] & M_{h\cJ} 
}
\]
in which the map in the lemma induces the middle lower vertical
arrow. The bottom part is obtained by passing to homotopy colimits from
the corresponding diagram of commutative $\cJ$-space monoids, and the
vertical equivalences in the upper part of the diagram arise from the
monoidal structure map of the homotopy colimit.

We must show that the vertical composition $M_{h\cJ}\times V(M)_{h\cJ}
\to B^{\cy}(M)_{h\cJ}$ is a weak homotopy equivalence. Notice that the
latter is equivariant as a map of spaces with left
$M_{h\cJ}$-action. Furthermore, the assumption that $M$ is commutative
and grouplike implies that $V(M)_{h\cJ}$ is path connected and that
$M_{h\cJ}\to B^{\cy}(M)_{h\cJ}$ induces an isomorphism of path
components. Since $M_{h\cJ}$ is assumed to be grouplike it therefore
suffices to show that the map in question restricts to a weak homotopy
equivalence on the path components containing the unit elements. For
this we observe that the corresponding restriction of the above
diagram is a diagram of horizontal homotopy fiber sequences, which
gives the result.
\end{proof}
The next proposition identifies the homotopy type of $V(M)$
for grouplike $M$.
\begin{proposition}\label{prop:grouplike-V(M)-equivalence}
  Suppose that $M$ is a grouplike and cofibrant commutative
  $\cJ$-space monoid. Then there is a chain of natural
  $\cJ$-equivalences of $\cJ$-spaces augmented over $M$ relating
  $V(M)$ and $U^{\cJ}\times B(M_{h\cJ})$.
\end{proposition}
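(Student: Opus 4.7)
The plan is to view $V(M)$ as a $\cJ$-space model for the $\cJ$-homotopy fiber of the augmentation $\epsilon\colon B^{\cy}(M)\to M$ over the unit, to identify this fiber with a bar-construction style $\cJ$-space built from $M$, and finally to compare that bar construction with $U^{\cJ}\times B(M_{h\cJ})$.

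First, since $U(M)\twoheadrightarrow M$ is a positive $\cJ$-fibration and $U^{\cJ}\rightarrowtail U(M)$ is an acyclic cofibration factoring the unit of $M$, the defining pullback exhibits $V(M)$ as a strict model for the $\cJ$-homotopy fiber of $\epsilon$ over the unit. The canonical augmentation $V(M)\to U(M)\to M$ thus factors through $U^{\cJ}\to M$, matching the augmentation on $U^{\cJ}\times B(M_{h\cJ})$ that sends everything to the unit of $M$.

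Next I would introduce the two-sided bar construction $BM_{\cJ}$ in $\cJ$-spaces, realized from the simplicial $\cJ$-space $[q]\mapsto M^{\boxtimes q}$ with standard bar face and degeneracy maps built from the multiplication on $M$ and the unit $U^{\cJ}\to M$. For commutative grouplike $M$, the classical shear construction yields a $\cJ$-equivalence
\[
M\boxtimes BM_{\cJ} \xrightarrow{\ \sim\ } B^{\cy}(M)
\]
over $M$ whose $q$-simplex component sends $a\boxtimes(b_1,\dots,b_q)$ to $(a,b_1,\dots,b_q)$; grouplikeness ensures this is a $\cJ$-equivalence, mirroring the familiar splitting $B^{\cy}(G)\simeq G\times BG$ for grouplike commutative simplicial monoids. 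In combination with the previous proposition, pulling back along $U(M)\to M$ then gives a chain
\[
V(M) \simeq U(M)\boxtimes BM_{\cJ} \xleftarrow{\ \sim\ } U^{\cJ}\boxtimes BM_{\cJ} \cong BM_{\cJ},
\]
all augmented over $M$ through $U^{\cJ}\to M$.

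Finally I would relate $BM_{\cJ}$ to $U^{\cJ}\times B(M_{h\cJ})=F^{\cJ}_{(\bld{0},\bld{0})}(B(M_{h\cJ}))$ by a natural zig-zag of $\cJ$-equivalences. Both $\cJ$-spaces have homotopy colimit weakly equivalent to $B(M_{h\cJ})$: for $BM_{\cJ}$ this uses that $(-)_{h\cJ}$ preserves realizations and is lax monoidal up to $\cJ$-equivalence on cofibrant objects, so the realization of $[q]\mapsto(M^{\boxtimes q})_{h\cJ}$ computes $B(M_{h\cJ})$; for $F^{\cJ}_{(\bld{0},\bld{0})}(B(M_{h\cJ}))$ this follows from the contractibility of $(U^{\cJ})_{h\cJ}$. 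The natural zig-zag can be produced by passing through an intermediate simplicial $\cJ$-space, for instance the levelwise free $\cJ$-space $[q]\mapsto F^{\cJ}_{(\bld{0},\bld{0})}((M^{\boxtimes q})_{h\cJ})$, which receives maps from both $BM_{\cJ}$ (via the adjunction unit $X\to F^{\cJ}_{(\bld{0},\bld{0})}(X(\bld{0},\bld{0}))$ composed with $X(\bld{0},\bld{0})\to X_{h\cJ}$) and $F^{\cJ}_{(\bld{0},\bld{0})}(B(M_{h\cJ}))$ (via the monoidal structure map of the homotopy colimit).

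The main obstacle lies in this last step. Although the agreement of homotopy colimits is essentially formal, producing an explicit natural chain of $\cJ$-\emph{space} equivalences is delicate because $BM_{\cJ}$ has non-trivial values throughout $\cJ$ while $U^{\cJ}\times B(M_{h\cJ})$ is concentrated at the monoidal unit $(\bld{0},\bld{0})$. Verifying that the intermediate construction is naturally $\cJ$-equivalent to both sides—while preserving the augmentation over $M$ and remaining functorial in $M$—will require careful simplicial bookkeeping and appropriate (co)fibrant replacements.
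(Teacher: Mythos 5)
Your overall strategy — split $B^{\cy}(M)$ multiplicatively, identify the ``fiber'' $\cJ$-space as a bar construction, then compare with $U^{\cJ}\times B(M_{h\cJ})$ — is the right shape, but two of the steps you propose do not go through in $\cJ$-spaces, and these are exactly the difficulties the paper's bar-resolution machinery is designed to circumvent.

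First, the two-sided bar construction $BM_{\cJ}$ you invoke is not available. The outer face maps $d_0, d_q$ of a bar construction $B(U^{\cJ},M,U^{\cJ})$ require $U^{\cJ}$ to carry an $M$-module structure, i.e.\ a map of commutative $\cJ$-space monoids $M\to U^{\cJ}$. But $U^{\cJ}=\cJ((\bld 0,\bld 0),-)$ is \emph{not} the terminal $\cJ$-space, so such an augmentation does not exist in general — in particular it fails for precisely the grouplike $M$ of interest, such as $\GLoneJof(E)$ for a periodic $E$, which are not concentrated in $\cJ$-degree $0$. There is no ``standard bar construction'' $[q]\mapsto M^{\boxtimes q}$ here. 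Second, even granting a suitable stand-in, the shear map you write, $a\boxtimes(b_1,\dots,b_q)\mapsto(a,b_1,\dots,b_q)$, is not a simplicial map: $d_0$ on the target multiplies $ab_1$ while $d_0$ on the source would drop $b_1$. The classical shear for a commutative simplicial group goes in the opposite direction, $B^{\cy}(G)\to G\times BG$, $(g_0,\dots,g_q)\mapsto(g_0g_1\cdots g_q;g_1,\dots,g_q)$, and that formula has no naive $\boxtimes$-analogue because there is no projection $M^{\boxtimes(q+1)}\to M^{\boxtimes q}$.

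The paper's proof resolves both problems at once by replacing $M$ with its bar resolution $\overline M$. Although $\overline M$ is only a (non-commutative) $\cJ$-space monoid, $B^{\cy}(\overline M)$ is still defined and level equivalent to $B^{\cy}(M)$ (Lemma~\ref{lem:bar-cyclic-equivalence}). Crucially, $\overline M$ comes with a canonical monoid map $\overline M\to M_{h\cJ}$ to the constant $\cJ$-space monoid (Lemma~\ref{lem:colim-bar-resolution}); this is what produces the ``projection away from the zeroth coordinate'' factor $\ovl\pi\colon B^{\cy}(\overline M)\to B(M_{h\cJ})$, and together with the augmentation $\ovl\epsilon$ yields the $\cJ$-equivalence $B^{\cy}(\overline M)\to M\times B(M_{h\cJ})$ of Lemma~\ref{lem:Bcy-bar-M-decomposition}. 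Your final step (producing the natural zig-zag between your hypothetical $BM_{\cJ}$ and $F^{\cJ}_{(\bld 0,\bld 0)}(B(M_{h\cJ}))$) is also the part you flag as delicate; in the paper this is handled by pulling back the $\cJ$-equivalence $(\ovl\epsilon,\ovl\pi)$ along $U(M)\twoheadrightarrow M$ and invoking right properness, which avoids any direct comparison of bar constructions. In short, the missing idea is the bar resolution $\overline M$ and its canonical map to $M_{h\cJ}$; without it neither your bar construction nor your shear map exists.
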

The proof of this proposition needs some preparation and will be given
at the end of this section.

\subsection{The bar resolution of \texorpdfstring{$\cJ$}{J}-spaces} 
Let $X$ be a $\cJ$-space. We define the \emph{bar
  resolution} $\overline X$ of $X$ to be the $\cJ$-space given by the
bar construction
\[
\overline X(\bld n_1,\bld n_2)=B(\cJ(-,(\bld n_1,\bld n_2)),\cJ,X),
\] 
where we view $\cJ(-,(\bld n_1,\bld n_2))$ as a $\cJ^{\op}$-space in
the obvious way. (See e.g.\ \cite{Hollender-V_modules} for a
discussion of the bar construction in the context of diagram spaces.)
By definition, this is the same as the homotopy left Kan extension of
$X$ along the identity functor on $\cJ$. Equivalently, we may describe
$\overline X(\bld n_1,\bld n_2)$ as the homotopy colimit
\[
\overline X(\bld n_1,\bld n_2)=\hocolim_{\cJ\downarrow(\bld n_1,\bld n_2)}X\circ \pi_{(\bld n_1,\bld n_2)}
\]
over the comma category $\cJ\downarrow(\bld n_1,\bld n_2)$ of
objects in $\cJ$ over $(\bld n_1,\bld n_2)$. Here $\pi_{(\bld n_1,\bld
  n_2)}$ denotes the forgetful functor from $\cJ\downarrow(\bld
n_1,\bld n_2)$ to $\cJ$. Each of the categories $\cJ\downarrow(\bld
n_1,\bld n_2)$ has a terminal object and hence the projection of the
homotopy colimit onto the colimit defines an evaluation map of
$\cJ$-spaces $\overline X\to X$ that is a level equivalence.

\begin{lemma}\label{lem:colim-bar-resolution}
There is a natural isomorphism $\colim_{\cJ}\overline X\cong X_{h\cJ}$. \qed
\end{lemma}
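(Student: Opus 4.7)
The plan is to unwind the definition of the bar resolution levelwise and exploit the cocontinuity of $\colim_\cJ$. By construction, $\overline X(\bld{n_1},\bld{n_2})$ is the geometric realization of the simplicial space whose $q$-simplices are
\[
\coprod_{a_0,\dots,a_q} \cJ(a_q,(\bld{n_1},\bld{n_2})) \times \cJ(a_{q-1},a_q) \times \cdots \times \cJ(a_0,a_1) \times X(a_0).
\]
The variable $(\bld{n_1},\bld{n_2})$ appears only in the corepresentable factor $\cJ(a_q,-)$. Since $\colim_\cJ\colon \cS^{\cJ} \to \cS$ is a left adjoint, it commutes with both geometric realization and coproducts, so computing $\colim_\cJ \overline X$ reduces to computing $\colim_{(\bld{n_1},\bld{n_2})\in\cJ} \cJ(a_q,(\bld{n_1},\bld{n_2}))$ for each fixed $a_q$.

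Next I would invoke the standard observation that for any small category $\cC$ and any object $c$, the colimit of the corepresentable functor $\cC(c,-)$ is canonically a singleton: every morphism $f\colon c \to c'$ is identified in the colimit with $f \circ \id_c$, so the class of $\id_c \in \cC(c,c)$ exhausts the colimit. Applied to $\cJ$, this collapses the corepresentable factor to a point and leaves
\[
\coprod_{a_0,\dots,a_q} \cJ(a_{q-1},a_q) \times \cdots \times \cJ(a_0,a_1) \times X(a_0)
\]
as the $q$-simplices of $\colim_\cJ \overline X$. These are precisely the $q$-simplices of the two-sided bar construction $B(*,\cJ,X)$, whose realization is the Bousfield--Kan definition of $\hocolim_{\cJ} X = X_{h\cJ}$. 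Naturality in $X$ is transparent since every identification is functorial in the last variable of the bar construction.

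I do not anticipate a genuine obstacle: the argument is purely formal, resting only on the commutation of $\colim_\cJ$ with geometric realization and coproducts, together with the elementary fact that corepresentable functors have terminal colimit. The only mild bookkeeping issue is verifying that the simplicial face and degeneracy operators of $\overline X$ descend to the expected operators of $B(*,\cJ,X)$ after collapsing the corepresentable slot, but this is immediate from the explicit formulas and requires no nontrivial input.
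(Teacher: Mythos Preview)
Your argument is correct and is precisely the standard verification that the paper omits: the lemma is stated with an immediate \qed\ and no proof, so you are simply filling in the routine details. The key points you use---cocontinuity of $\colim_\cJ$, the collapse of corepresentables to a point, and the identification of the resulting simplicial object with the Bousfield--Kan bar model for $X_{h\cJ}$---are exactly what one would supply if asked to justify the lemma.
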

As a consequence of the lemma there is a natural map of $\cJ$-spaces
$\overline X\to X_{h\cJ}$ when we view $X_{h\cJ}$ as a constant
$\cJ$-space. (Notice that this is not a $\cJ$-equivalence since $B\cJ$
is not contractible). The lemma suggests that $\overline X$
is a kind of cofibrant replacement of $X$. More precisely we have the
following result, which can be deduced from the skeletal filtration of
the bar construction.

\begin{lemma}\label{lem:bar-resolution-cofibrant}
  Let $X$ be a $\cJ$-space. As a $\cJ$-space, the bar resolution
  $\overline X$ is cofibrant in the absolute projective model
  structure of~\cite[Proposition 4.8]{Sagave-S_diagram}.\qed
\end{lemma}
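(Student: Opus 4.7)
My plan is to realize $\overline{X}$ as the geometric realization of a Reedy cofibrant simplicial object in $\cS^{\cJ}$ with the absolute projective model structure, and then invoke the standard fact that such realizations are cofibrant. Recall that cofibrations in the absolute projective model structure are generated by the maps $F^{\cJ}_c(\partial\Delta^n)\to F^{\cJ}_c(\Delta^n)$ for $c$ ranging over all objects of $\cJ$, so $F^{\cJ}_c\colon\cS\to\cS^{\cJ}$ is left Quillen when $\cS$ carries the Kan model structure. In particular, every $F^{\cJ}_c(K)$ is cofibrant in $\cS^{\cJ}$.

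Unwinding the bar construction, the underlying simplicial $\cJ$-space $\overline{X}_\bullet$ has
\[
\overline{X}_q \;\iso\; \coprod_{c_0\to\cdots\to c_q} F^{\cJ}_{c_q}\bigl(\cJ(c_{q-1},c_q)\times\cdots\times\cJ(c_0,c_1)\times X(c_0)\bigr),
\]
with the coproduct indexed over strings of $q$ composable morphisms in $\cJ$. Each summand is $F^{\cJ}_{c_q}$ applied to a simplicial set, hence cofibrant; so $\overline{X}_q$ is cofibrant. The face maps use composition in $\cJ$ together with the $\cJ$-action on $X$, while the degeneracy $s_i$ is the summand inclusion taking the $(c_0\to\cdots\to c_{q-1})$-summand into the $(c_0\to\cdots\to c_i\xrightarrow{\id}c_i\to\cdots\to c_{q-1})$-summand.

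The key observation is that the indexing set for the coproduct at level $q$ partitions into strings containing at least one identity morphism and strings whose morphisms are all non-identities. The former subset is exactly the union of images of the degeneracies, so the $q$-th latching object $L_q\overline{X}_\bullet$ is identified with the sub-coproduct indexed by strings containing at least one identity. Consequently the latching map $L_q\overline{X}_\bullet\to\overline{X}_q$ is a summand inclusion between cofibrant objects, hence a cofibration. This gives Reedy cofibrancy of $\overline{X}_\bullet$, and the standard skeletal filtration argument in the simplicial model category $\cS^{\cJ}$ then implies that the realization $\overline{X}$ is cofibrant. The main subtlety is the combinatorial identification of $L_q\overline{X}_\bullet$ with the degenerate sub-coproduct, which rests on the observation that the degeneracies act on the indexing strings of morphisms and not on the simplicial sets $X(c_0)$; once that is in place, no further model-categorical obstacles arise.
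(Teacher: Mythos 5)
Your proof is correct and fills in exactly the "skeletal filtration" argument that the paper alludes to but does not spell out (the lemma is stated with a bare \verb|\qed| after a one-sentence remark that it "can be deduced from the skeletal filtration of the bar construction"). Reedy cofibrancy plus the standard realization lemma is the right way to implement that hint.

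One slip worth flagging: your displayed formula for $\overline{X}_q$ is internally inconsistent. If the coproduct is indexed over strings of composable morphisms $c_0 \to \cdots \to c_q$, then the summand attached to a given string is just $F^{\cJ}_{c_q}(X(c_0))$; the factors $\cJ(c_{q-1},c_q)\times\cdots\times\cJ(c_0,c_1)$ should not also appear inside $F^{\cJ}_{c_q}(-)$, as they have already been used up in the indexing. (Alternatively, one can index over tuples of objects $(c_0,\dots,c_q)$ and keep those factors inside $F^{\cJ}_{c_q}(-)$, but then the degeneracy $s_i$ is not an isomorphism onto a summand: it is the inclusion $\{\id_{c_i}\}\hookrightarrow\cJ(c_i,c_i)$ on one factor, and the clean identification of the latching object no longer reads off directly.) Your subsequent description of $s_i$ as a summand inclusion tacitly uses the string indexing, so the argument goes through once the formula is corrected to match. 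With that fix, the identification of $L_q\overline{X}_\bullet$ with the sub-coproduct over strings containing an identity is precisely the latching object computation for the nerve $N\cJ$, pulled back along the coproduct decomposition, and the latching map is a coproduct inclusion of cofibrant objects, hence a projective cofibration. Reedy cofibrancy and cofibrancy of the realization then follow as you say.
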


Clearly the bar resolution $X\mapsto \overline X$ is functorial in $X$
and we claim that it canonically has the structure of a lax monoidal
functor. Indeed, the monoidal product $\overline X\boxtimes \overline
Y\to \overline{X\boxtimes Y}$ is induced by the natural map of
$\cJ\times\cJ$-spaces
\[
\begin{aligned}
&B(\cJ(-,(\bld m_1,\bld m_2)),\cJ,X)\times B(\cJ(-,(\bld n_1,\bld n_2)),\cJ,Y)\\
&\cong B(\cJ(-,(\bld m_1,\bld m_2))\times \cJ(-,(\bld n_1,\bld n_2)),\cJ\times\cJ,X\times Y)\\
&\to B(\cJ(-,(\bld m_1,\bld m_2)\sqcup(\bld n_1,\bld n_2)),\cJ,X\boxtimes Y).
\end{aligned}
\] 
Here we use that the bar construction preserves products. The second
map is induced by the monoidal structure map
$\sqcup\colon\cJ\times\cJ\to \cJ$, the canonical map of
$\cJ\times\cJ$-spaces $X\times Y\to \sqcup^*(X\boxtimes Y)$, and the
map of $(\cJ\times\cJ)^{\op}$-spaces in the first variable determined
by $\sqcup$. The monoidal unit is the unique map of $\cJ$-spaces from
the unit $U^{\cJ}$ for the $\boxtimes$-product to its bar
resolution. Furthermore, it is easy to check that the evaluation
$\overline X\to X$ is a monoidal natural transformation. This implies
that the bar resolution of a $\cJ$-space monoid $M$ is again a
$\cJ$-space monoid and that $\overline M\to M$ is a map of $\cJ$-space
monoids. By Lemma~\ref{lem:colim-bar-resolution} there also is a
natural map of $\cJ$-space monoids $\overline M\to M_{h\cJ}$.

\begin{remark}
  The bar resolution functor is not lax symmetric monoidal, and
  consequently it does not take commutative $\cJ$-space monoids to
  commutative $\cJ$-space monoids. 
\end{remark}

\begin{lemma}\label{lem:bar-cyclic-equivalence}
  Let $M$ be a cofibrant commutative $\cJ$-space monoid. Then
  evaluation $\overline M\to M$ induces a level equivalence
  $B^{\cy}(\overline M)\to B^{\cy}(M)$.
\end{lemma}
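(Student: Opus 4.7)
The plan is to reduce the assertion to a levelwise comparison of the underlying simplicial $\cJ$-spaces and then use cofibrancy to transport the resulting equivalences through geometric realization. By definition, $B^{\cy}(\overline{M})$ and $B^{\cy}(M)$ are the realizations of the simplicial $\cJ$-spaces $[q]\mapsto\overline{M}^{\boxtimes (q+1)}$ and $[q]\mapsto M^{\boxtimes (q+1)}$, whose face maps are built from the monoid multiplications, whose degeneracies come from the units, and whose $d_0$-face additionally uses the symmetry of $\boxtimes$. Because realization is computed objectwise in the $\cJ$-variable, the problem splits into (a) showing that for every $q$ the map $\overline{M}^{\boxtimes (q+1)}\to M^{\boxtimes (q+1)}$ is a level equivalence of $\cJ$-spaces, and (b) verifying enough cofibrancy of the two simplicial $\cJ$-spaces for realization to preserve such levelwise equivalences.

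For (a), I would factor the map as
\[
\overline{M}^{\boxtimes (q+1)} \to \overline{M}^{\boxtimes q}\boxtimes M \to \dots \to M\boxtimes\overline{M}^{\boxtimes q} \to M^{\boxtimes (q+1)},
\]
replacing a single copy of $\overline{M}$ by $M$ at each stage. The $\cJ$-space $\overline{M}$ is cofibrant in the absolute projective model structure by Lemma~\ref{lem:bar-resolution-cofibrant}, and the evaluation $\overline{M}\to M$ is a level equivalence. The argument therefore hinges on the assertion that $\boxtimes$ against a cofibrant commutative $\cJ$-space monoid, or against a cofibrant $\cJ$-space, preserves level equivalences between suitably cofibrant $\cJ$-spaces. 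This is the analog for commutative $\cJ$-space monoids of the flatness of cofibrant commutative symmetric ring spectra, and should follow by a cell induction using the standard properties of the positive projective model structure on $\cC\cS^{\cJ}$.

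For (b), cofibrancy of $M$ in $\cC\cS^{\cJ}$ forces the unit $U^{\cJ}\to M$ to be a cofibration of $\cJ$-spaces, so the degeneracy maps of $[q]\mapsto M^{\boxtimes (q+1)}$ are cofibrations and the simplicial object is Reedy cofibrant in levels; the parallel statement for $\overline{M}$ uses the $\cJ$-space cofibrancy from Lemma~\ref{lem:bar-resolution-cofibrant} together with functoriality of $\boxtimes$. The main obstacle I anticipate is executing the flatness step in (a) cleanly: although entirely analogous to the symmetric spectrum case, a rigorous argument requires a careful analysis of the interaction of the $\boxtimes$-product with the positive projective cofibrations in $\cC\cS^{\cJ}$ and with the absolute projective cofibrations in $\cS^{\cJ}$, so that replacing one factor of $\overline{M}$ by $M$ actually yields a level equivalence at each intermediate stage.
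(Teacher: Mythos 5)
Your overall strategy—levelwise comparison of the simplicial $\cJ$-spaces followed by passage through realization—is the same as the paper's. But there is a genuine error in part~(b), and part~(a) leaves the crux unaddressed.

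In (b) you assert that cofibrancy of $M$ in $\cC\cS^{\cJ}$ forces the unit $U^{\cJ}\to M$ to be a cofibration of $\cJ$-spaces. This is false in general: the forgetful functor from commutative monoid objects to the underlying category does not send cofibrations to cofibrations in the positive projective setting (this is the standard obstruction to working with strictly commutative ring objects, and is precisely why one works with \emph{flat} objects instead of cofibrant ones). In fact the paper never needs any cofibrancy of the simplicial objects: the ``realization lemma for bisimplicial sets'' says that a levelwise weak equivalence of bisimplicial sets induces a weak equivalence on diagonals/realizations with no cofibrancy hypotheses at all, so your Reedy-cofibrancy considerations in (b) are both unnecessary and based on a false premise.

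For (a), the property you gesture at is exactly \emph{flatness} in the sense of \cite[Section~4.27]{Sagave-S_diagram}. The paper's proof is short precisely because these facts are already established there: \cite[Proposition~4.28]{Sagave-S_diagram} shows that the underlying $\cJ$-space of a cofibrant commutative $\cJ$-space monoid is flat (for $M$) and that projectively cofibrant $\cJ$-spaces are flat (for $\overline{M}$, using Lemma~\ref{lem:bar-resolution-cofibrant}); then \cite[Proposition~8.2]{Sagave-S_diagram} yields directly that $B^{\cy}_q(\overline{M})\to B^{\cy}_q(M)$ is a level equivalence for all~$q$. Your ``should follow by cell induction'' underestimates the work here—the flatness of underlying $\cJ$-spaces of cofibrant commutative monoids is the nontrivial input, analogous to the EKMM/Shipley result for commutative symmetric ring spectra, and is what replaces the incorrect cofibrancy statement in~(b).
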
 
\begin{proof}
  By Lemma~\ref{lem:bar-resolution-cofibrant} above
  and~\cite[Proposition~4.28]{Sagave-S_diagram}, the underlying
  $\cJ$-spaces of $\overline M$ and $M$ are flat in the sense of
  ~\cite[Section 4.27]{Sagave-S_diagram}. Since $\overline M\to M$ is
  a level equivalence, \cite[Proposition~8.2]{Sagave-S_diagram} implies that
  $B^{\cy}_{q}(\overline M)\to B^{\cy}_{q}(M)$ is a level equivalence
  in every simplicial degree $q$. The claim follows by the realization
  lemma for bisimplicial sets.
\end{proof}
We now use the bar resolution to analyze the homotopy colimit of
$B^{\cy}(M)$ under suitable assumptions on $M$. For this we note that
one can also define the cyclic bar construction $B^{\cy}$ in
$(\cS,\times,*)$ and apply it to associative simplicial monoids.

\begin{lemma}\label{lem:Bcy-bar-hocolim}
  There is a natural weak equivalence $B^{\cy}(\overline M)_{h\cJ}\to
  B^{\cy}(M_{h\cJ})$.
\end{lemma}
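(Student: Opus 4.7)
The plan is to exploit that the bar resolution $\overline M$ is cofibrant (hence flat) and that $\hocolim_\cJ$ is lax symmetric monoidal in a way that becomes strong on flat diagrams. Combined with Lemma~\ref{lem:colim-bar-resolution}, this allows us to commute $B^{\cy}$ past $(-)_{h\cJ}$.

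First I would construct the comparison map. View $B^{\cy}(\overline M)$ as the geometric realization of the simplicial $\cJ$-space $[q] \mapsto \overline M^{\boxtimes(q+1)}$ and commute $(-)_{h\cJ}$ past the realization. The lax monoidal structure of $\hocolim_\cJ$ yields natural maps $(\overline M^{\boxtimes(q+1)})_{h\cJ} \to (\overline M_{h\cJ})^{q+1}$ that are compatible with the cyclic face, degeneracy, and twist operators. Composing with the iterated power of the canonical map $\overline M_{h\cJ} \to M_{h\cJ}$ (obtained from Lemma~\ref{lem:colim-bar-resolution} and the fact that $\overline M$ is cofibrant so that $\colim_\cJ \overline M$ models $\overline M_{h\cJ}$) identifies the target simplicial space with the simplicial object $[q] \mapsto (M_{h\cJ})^{q+1}$ computing $B^{\cy}(M_{h\cJ})$.

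To show the map is a weak equivalence, I would combine three inputs. First, by Lemma~\ref{lem:bar-resolution-cofibrant}, $\overline M$ is projective cofibrant, and therefore flat in the sense of~\cite[Section~4.27]{Sagave-S_diagram}; iterated $\boxtimes$-powers of flat $\cJ$-spaces remain flat. Second, the monoidal property of $\hocolim_\cJ$ for flat diagrams (the same mechanism used in~\cite[Proposition~8.2]{Sagave-S_diagram}) gives a weak equivalence $(\overline M^{\boxtimes(q+1)})_{h\cJ} \simeq (\overline M_{h\cJ})^{q+1}$ at each simplicial level. Third, $\overline M_{h\cJ} \simeq M_{h\cJ}$ by the preceding paragraph. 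The level-wise equivalences are compatible with the simplicial structure, so the realization lemma for bisimplicial sets delivers the claimed natural weak equivalence.

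The main obstacle is bookkeeping for the cyclic structure: one must ensure that the monoidal comparison for $\hocolim_\cJ$ is sufficiently symmetric monoidal that it intertwines the twist isomorphisms of $\boxtimes$ with those of $\times$, so the level-wise equivalences really do assemble into a map of simplicial spaces. Granting this compatibility, which is standard for the convolution product on $\cS^{\cJ}$, the realization lemma applies directly and the equivalence is natural in $M$.
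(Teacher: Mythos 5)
The overall strategy --- prove a levelwise weak equivalence and invoke the realization lemma for bisimplicial sets --- is the same as in the paper, but the construction of the comparison map has a directional error that undermines the argument.

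The lax monoidal structure on $\hocolim_{\cJ}$ supplies natural maps
$X_{h\cJ}\times Y_{h\cJ}\to (X\boxtimes Y)_{h\cJ}$, i.e.\ from the product of homotopy colimits \emph{into} the homotopy colimit of the $\boxtimes$-product; see the vertical equivalences in the proof of Proposition~\ref{prop:grouplike-M-VM-equivalence}. Your proposal asserts a natural map $(\overline M^{\boxtimes(q+1)})_{h\cJ}\to(\overline M_{h\cJ})^{q+1}$ coming from this structure, which is the opposite direction; there is no oplax comparison here. Since the whole point is to build a map $B^{\cy}(\overline M)_{h\cJ}\to B^{\cy}(M_{h\cJ})$, you need the comparison to point out of the $\boxtimes$-powers, and the lax structure cannot provide it.

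The correct mechanism, used in the paper, is different: first project $\hocolim_{\cJ}\to\colim_{\cJ}$ (a genuine natural transformation in the required direction), then use that the \emph{colimit} functor $\colim_{\cJ}\colon\cS^{\cJ}\to\cS$ is \emph{strong} symmetric monoidal, giving $\colim_{\cJ}(\overline M^{\boxtimes(q+1)})\cong(\colim_{\cJ}\overline M)^{q+1}$, and finally identify $\colim_{\cJ}\overline M\cong M_{h\cJ}$ via Lemma~\ref{lem:colim-bar-resolution}. The fact that this composite is a levelwise weak equivalence then relies on the statement that $X_{h\cJ}\to\colim_{\cJ}X$ is a weak equivalence when $X$ is a cofibrant $\cJ$-space (namely \cite[Lemma~6.22]{Sagave-S_diagram}, not \cite[Proposition~8.2]{Sagave-S_diagram}, which concerns $\boxtimes$ preserving level equivalences between flat $\cJ$-spaces and is a separate ingredient). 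Once you replace your lax-monoidal comparison by the projection to colim combined with strong monoidality of colim and the cofibrancy input, the realization-lemma step goes through exactly as you describe.
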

\begin{proof}
  Notice first that $B^{\cy}(\overline M)_{h\cJ}$ is isomorphic to the
  realization of the cyclic space $B^{\cy}_{\bullet}(\overline
  M)_{h\cJ}$ obtained by evaluating the homotopy colimit in each
  simplicial degree.  Now we use that the colimit functor from
  $\cS^{\cJ}$ to $\cS$ is strong symmetric monoidal, with respect to
  the $\boxtimes$-product on $\cS^{\cJ}$ and the usual categorical
  product on $\cS$, to get a natural map of cyclic spaces
  \begin{equation}\label{eq:Bcy-bar-hocolim}
  \hocolim_{\cJ}B_{\bullet}^{\cy}(\overline M)\to
  \colim_{\cJ}B_{\bullet}^{\cy}(\overline M)\cong
  B_{\bullet}^{\cy}(\colim_{\cJ}\overline M)\cong
  B_{\bullet}^{\cy}(M_{h\cJ}).
  \end{equation}
  By the realization lemma for bisimplicial sets it is enough to show
  that the first map is a weak homotopy equivalence in each simplicial
  degree. Since $\overline M$ and hence $B_{q}^{\cy}(\overline M)$ 
  are cofibrant $\cJ$-spaces, this follows from~\cite[Lemma~6.22]{Sagave-S_diagram}.
\end{proof}

\begin{corollary}\label{cor:BcyM-hJ-vs-Bcy-of-MhJ}
For a cofibrant commutative $\cJ$-space monoid $M$ there is a chain of natural weak homotopy equivalences 
\[
B^{\cy}(M)_{h\cJ}\xleftarrow{\sim} B^{\cy}(\overline M)_{h\cJ} \xrightarrow{\sim} B^{\cy}(M_{h\cJ}).
\eqno\qed
\]
\end{corollary}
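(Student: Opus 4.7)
The plan is to read the chain off directly from the two preceding lemmas. Lemma~\ref{lem:bar-cyclic-equivalence} supplies a level equivalence $B^{\cy}(\overline M) \to B^{\cy}(M)$, and Lemma~\ref{lem:Bcy-bar-hocolim} supplies the natural map $B^{\cy}(\overline M)_{h\cJ} \to B^{\cy}(M_{h\cJ})$ together with the assertion that it is a weak equivalence. So the corollary is essentially a packaging of these two facts, and the only thing left to verify is that the map on the left is a weak equivalence after passing to homotopy colimits.

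For the left-pointing arrow, I would apply the Bousfield--Kan homotopy colimit functor $(-)_{h\cJ}$ to the natural map $B^{\cy}(\overline M) \to B^{\cy}(M)$ furnished by Lemma~\ref{lem:bar-cyclic-equivalence}. That map is a level equivalence of $\cJ$-spaces, and $\hocolim_{\cJ}$ sends levelwise weak equivalences to weak homotopy equivalences (this is the standard homotopy invariance of $\hocolim$ over a fixed small indexing category, not requiring any cofibrancy condition on the diagrams). Hence the induced map $B^{\cy}(\overline M)_{h\cJ} \to B^{\cy}(M)_{h\cJ}$ is a weak equivalence.

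For the right-pointing arrow I would simply cite Lemma~\ref{lem:Bcy-bar-hocolim}, noting that its hypotheses are fulfilled because $M$ is a cofibrant commutative $\cJ$-space monoid, so $\overline M$ is a cofibrant $\cJ$-space (Lemma~\ref{lem:bar-resolution-cofibrant}), which is exactly what the proof of Lemma~\ref{lem:Bcy-bar-hocolim} needed in order to invoke~\cite[Lemma~6.22]{Sagave-S_diagram} degreewise. The naturality of both arrows in $M$ follows from the naturality of the bar resolution $M \mapsto \overline M$, of the evaluation $\overline M \to M$, and of the comparison map in~\eqref{eq:Bcy-bar-hocolim}. I do not expect any real obstacle here beyond checking that the two constructions are indeed natural in commutative $\cJ$-space monoids $M$ — the substantive work has already been absorbed into the two lemmas.
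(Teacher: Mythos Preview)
Your proposal is correct and matches the paper's approach exactly: the corollary is stated with a \qed and no proof, signalling that it follows immediately from Lemmas~\ref{lem:bar-cyclic-equivalence} and~\ref{lem:Bcy-bar-hocolim}, which is precisely what you unpack. One small remark: Lemma~\ref{lem:Bcy-bar-hocolim} carries no cofibrancy hypothesis on~$M$ (the cofibrancy of $\overline M$ holds for any $\cJ$-space by Lemma~\ref{lem:bar-resolution-cofibrant}), so the cofibrancy assumption on~$M$ in the corollary is needed only for the left-pointing arrow via Lemma~\ref{lem:bar-cyclic-equivalence}.
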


In the next lemma we consider the classifying space $B(M_{h\cJ})$ and
the levelwise cartesian product $M\times B(M_{h\cJ})$. The latter may
be interpreted as either the tensor of $M$ with the space
$B(M_{h\cJ})$, the $\boxtimes$-product of $M$ with $F_{(\bld 0,\bld
  0)}^{\cJ}(B(M_{h\cJ}))$, or the cartesian product of $M$ with the
constant $\cJ$-space defined by $B(M_{h\cJ})$.

\begin{lemma}\label{lem:Bcy-bar-M-decomposition}
  Let $M$ be a commutative $\cJ$-space monoid. There is a natural
  map of $\cJ$-spaces $(\ovl{\epsilon},\ovl{\pi})\colon B^{\cy}(\overline M)\to M\times B(M_{h\cJ})$,
  which is a $\cJ$-equivalence if $M$ is grouplike.
\end{lemma}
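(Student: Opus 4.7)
First I would construct the two component maps. Define $\ovl\epsilon$ as the composite
$B^{\cy}(\overline M) \to B^{\cy}(M) \xrightarrow{\epsilon} M$,
where the first arrow is induced by the evaluation $\overline M \to M$ of $\cJ$-space monoids and the second is the augmentation, which uses the commutativity of $M$. Define $\ovl\pi$ as the composite
$B^{\cy}(\overline M) \to B^{\cy}(M_{h\cJ}) \to B(M_{h\cJ})$,
where the first arrow uses the natural map of $\cJ$-space monoids $\overline M \to M_{h\cJ}$ from Lemma~\ref{lem:colim-bar-resolution} (with $M_{h\cJ}$ regarded as a constant $\cJ$-space), and the second is the standard simplicial projection $(g_0,\ldots,g_q)\mapsto(g_1,\ldots,g_q)$, which is well-defined for an arbitrary (not necessarily commutative) monoid.

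To verify that $(\ovl\epsilon,\ovl\pi)$ is a $\cJ$-equivalence when $M$ is grouplike, I would pass to homotopy colimits over $\cJ$. On the target, since $B(M_{h\cJ})$ is already a constant $\cJ$-space, the monoidal structure map of the homotopy colimit produces a weak equivalence $(M\times B(M_{h\cJ}))_{h\cJ}\simeq M_{h\cJ}\times B(M_{h\cJ})$. On the source, Corollary~\ref{cor:BcyM-hJ-vs-Bcy-of-MhJ} identifies $(B^{\cy}(\overline M))_{h\cJ}$ with $B^{\cy}(M_{h\cJ})$. Under these identifications, $\ovl\pi$ becomes the standard projection $B^{\cy}(M_{h\cJ})\to B(M_{h\cJ})$, and $\ovl\epsilon$ is represented up to homotopy by the iterated multiplication $(m_0,\ldots,m_q)\mapsto m_0m_1\cdots m_q$ in $M_{h\cJ}$ (well-defined as a simplicial map only up to coherent higher homotopy, since $M_{h\cJ}$ inherits only homotopy-commutative multiplication from $M$, but honestly well-defined after passage to homotopy groups). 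It therefore suffices to show that for $G=M_{h\cJ}$ the induced map $B^{\cy}(G)\to G\times BG$ is a weak equivalence.

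I would prove this by comparing quasifibrations. Viewing $B^{\cy}(G)$ as the Borel construction $EG\times_G G_{\mathrm{conj}}$ of the conjugation action of the grouplike simplicial monoid $G$ on itself, the projection $\ovl\pi$ is a quasifibration over $BG$ with fiber $G$, realized as the simplices of the form $(g,1,\ldots,1)$. Compared with the trivial fibration $G\times BG\to BG$, the map on base spaces is the identity, and on the fiber $\ovl\epsilon$ acts as $(g,1,\ldots,1)\mapsto g\cdot 1\cdots 1=g$. The five lemma applied to the associated long exact sequences of homotopy groups then yields the desired weak equivalence. The main obstacle is this last step: one must justify the quasifibration property in the simplicial setting and use that the commutativity of $M$ renders $G=M_{h\cJ}$ homotopy commutative, so that the conjugation action on $\pi_*(G)$ is trivial. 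Without the commutativity hypothesis the statement would fail, since $B^{\cy}(G)$ would split according to conjugacy classes in $\pi_0(G)$ rather than yielding a trivial product.
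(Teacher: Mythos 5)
Your construction of the two component maps matches the paper's, and the overall strategy---pass to homotopy colimits, then deduce the result by a fiber sequence comparison using that $M$ is grouplike, with Goodwillie's quasifibration result as the technical input---is also the paper's strategy. However, there is a real gap in the reduction step.

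You write that under the identification $(B^{\cy}(\overline M))_{h\cJ}\simeq B^{\cy}(M_{h\cJ})$ the map $\ovl\epsilon$ is ``represented up to homotopy by the iterated multiplication $(m_0,\ldots,m_q)\mapsto m_0m_1\cdots m_q$ in $M_{h\cJ}$.'' This doesn't work: iterated multiplication is not a map of simplicial sets $B^{\cy}(G)\to G$ unless $G$ itself is commutative, because the cyclic face $d_q(g_0,\ldots,g_q)=(g_qg_0,g_1,\ldots,g_{q-1})$ multiplies out to $g_qg_0\cdots g_{q-1}$, not $g_0\cdots g_q$. The simplicial monoid $M_{h\cJ}$ is only an $E_\infty$ monoid, not strictly commutative, so the augmentation $B^{\cy}(M)\to M$ (which does exist, because $M$ is commutative for $\boxtimes$) does not transport to a simplicial map $B^{\cy}(M_{h\cJ})\to M_{h\cJ}$. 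Consequently your intermediate object ``$B^{\cy}(G)\to G\times BG$'' is not a well-defined map, and the quasifibration comparison you set up is against a map that doesn't exist.

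The paper avoids this by never trying to push $\ovl\epsilon$ through $B^{\cy}(M_{h\cJ})$. Instead it builds a three-row diagram whose rows are
\[
M_{h\cJ}\to B^{\cy}(M_{h\cJ})\to B(M_{h\cJ}),\qquad
\overline M_{h\cJ}\to B^{\cy}(\overline M)_{h\cJ}\to B(M_{h\cJ}),\qquad
M_{h\cJ}\to M_{h\cJ}\times B(M_{h\cJ})\to B(M_{h\cJ}),
\]
with the middle row compared to the top one by the chain of Corollary~\ref{cor:BcyM-hJ-vs-Bcy-of-MhJ} and to the bottom one by the evaluation $\overline M\to M$, each comparison carried out only on the $\ovl\pi$ and fiber legs. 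The top row is a homotopy fiber sequence by the grouplike hypothesis (Goodwillie's realization lemma), which transfers to the middle row by Lemma~\ref{lem:Bcy-bar-hocolim}, and the conclusion follows by the five lemma. The crucial point is that the left-hand vertical maps are weak equivalences $\overline M_{h\cJ}\to M_{h\cJ}$; the paper explicitly notes there are two such natural maps (they happen to be canonically homotopic, though this isn't needed), and neither requires exhibiting iterated multiplication as a simplicial map. To repair your argument you should drop the factorization of $\ovl\epsilon$ through $B^{\cy}(M_{h\cJ})$ and instead compare fiber sequences with $\overline M_{h\cJ}$ as the honest fiber, matching the paper.
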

\begin{proof}
  Consider the map of $\cJ$-spaces $B^{\cy}(\overline M)\to
  B^{\cy}(M)\to M$ induced by the evaluation $\overline M\to M$ and
  the augmentation of $B^{\cy}(M)$, using that $M$ is
  commutative. This gives the first factor $\ovl{\epsilon}$ of the map in the
  lemma. The second factor $\ovl{\pi}$ is defined by the composition
  $B^{\cy}(\overline M)\to \mathrm{const}_{\cJ}B^{\cy}(M_{h\cJ})\to
  \mathrm{const}_{\cJ}B(M_{h\cJ})$ where the first map is the adjoint
  of the isomorphism $\colim_{\cJ}B^{\cy}(\overline M) \to
  B^{\cy}(M_{h\cJ})$ used in the definition of the
  map~\eqref{eq:Bcy-bar-hocolim} and the second map is given by the
  projection away from the zeroth coordinate in each simplicial degree.
  The induced map of homotopy colimits $B^{\cy}(\overline M)_{h\cJ}
  \to (M\times B(M_{h\cJ}))_{h\cJ}\iso M_{h\cJ} \times B(M_{h\cJ})$
  fits into a commutative diagram
  \[
  \xymatrix@-1pc{
    M_{h\cJ} \ar[r] & B^{\cy}(M_{h\cJ}) \ar[r] & B(M_{h\cJ})\\
    \overline{M}_{h\cJ} \ar[r]\ar[u]^{\sim} \ar[d]_{\sim}& B^{\cy}(\overline{M})_{h\cJ} \ar[r]\ar[u]^{\sim} \ar[d]& B(M_{h\cJ})\ar@{=}[u] \ar@{=}[d]\\
    M_{h\cJ} \ar[r] & M_{h\cJ}\times B(M_{h\cJ}) \ar[r] & B(M_{h\cJ})
  }
  \]
  as the middle lower vertical arrow. The equivalences
  $\overline{M}_{h\cJ}\to~M_{h\cJ}$ are defined as follows: In the
  lower part of the diagram it is induced by the evaluation
  $\overline M\to M$, whereas in the upper part of the diagram it is
  given by the projection from the homotopy colimit to the colimit
  using the identification in Lemma~\ref{lem:colim-bar-resolution},
  see also \cite[Theorem~5.5]{Hollender-V_modules}. (These two
  equivalences are canonically homotopic but that is not relevant for
  the argument.) The assumption that $M$ is grouplike implies that the
  upper row is a homotopy fiber sequence in the sense that the map
  from $M_{h\cJ}$ to the homotopy fiber of the second map is a weak
  homotopy equivalence. This follows from standard results on
  geometric realization of simplicial quasifibrations as in the proof
  of \cite[Lemma~V.I.3]{Goodwillie-cyclic}. Hence the middle row is
  also a homotopy fiber sequence by Lemma~\ref{lem:Bcy-bar-hocolim},
  which gives the result.
\end{proof}

\begin{proof}[Proof of Proposition~\ref{prop:grouplike-V(M)-equivalence}]
  Let $M$ be a grouplike and cofibrant commutative $\cJ$-space monoid
  and let $\overline{V}(M)$ be defined as the pullback of the diagram
  \[\xymatrix{U(M) \ar@{->>}[r] & M & B^{\cy}(\overline M)
    \ar[l]_-{\ovl{\epsilon}}},\]
  where the map on the right is defined as in
  Lemma~\ref{lem:Bcy-bar-M-decomposition}.  We claim that there is a
  chain of natural $\cJ$-equivalences of $\cJ$-spaces over $M$
  \begin{equation}\label{eq:grouplike-V(M)-equivalence}
  V(M)\xleftarrow{\sim} \overline{V}(M) \xrightarrow{\sim} U(M)\times
  B(M_{h\cJ})\xleftarrow{\sim} U^{\cJ}\times B(M_{h\cJ}).
  \end{equation}
  To obtain the maps in~\eqref{eq:grouplike-V(M)-equivalence}, we note
  that the evaluation map $\overline M\to M$ induces a
  $\cJ$-equivalence $B^{\cy}(\overline M)\to B^{\cy}(M)$ by
  Lemma~\ref{lem:bar-cyclic-equivalence}. There is an induced
  $\cJ$-equivalence $\overline V(M)\to V(M)$ since the positive
  $\cJ$-model structure is right proper. For the second equivalence we
  use the given map to $U(M)$ in the first factor and the second
  factor is the composition
  \[
  \overline V(M) \to B^{\cy}(\overline M) \xrightarrow{\ovl{\pi}} \mathrm{const}_{\cJ} B(M_{h\cJ})
  \]
  of the given map to $B^{\cy}(\overline M)$ with the map from
  Lemma~\ref{lem:Bcy-bar-M-decomposition}.  Now consider the homotopy
  cartesian square of $\cJ$-spaces
  \[
  \xymatrix@-1pc{
    U(M)\times B(M_{h\cJ}) \ar@{->>}[rr]\ar[d] && M\times B(M_{h\cJ})\ar[d]\\
    U(M) \ar@{->>}[rr] && M.  }
  \]
  Together with the $\cJ$-equivalence from Lemma
  \ref{lem:Bcy-bar-M-decomposition} the map just described defines a
  map from the square defining $\overline V(M)$ to the latter
  square. Hence the result again follows from right properness of the
  positive $\cJ$-model structure.
\end{proof}

\section{The replete bar construction for commutative \texorpdfstring{$\cJ$}{J}-space monoids}\label{sec:rep-bar}
Let $M$ be a commutative $\cJ$-space monoid.  As the second building
block of the logarithmic $\THH$ to be defined in
Section~\ref{sec:log-thh}, we now recall the definition of the
\emph{replete bar construction} $B^{\rep}(M)$ from~\cite[Section~3.3]{RSS_LogTHH-I}. Let $M \to M^{\gp}$ be 
a chosen functorial group completion of $M$ and let
\begin{equation}\label{eq:factorization-for-BrepM-def}
\xymatrix{M \ar@{ >->}[r]^{\sim}&M' \ar@{->>}[r] & M^{\gp}}\end{equation} be a (functorial)
factorization of $M \to M^{\gp}$ into an acyclic cofibration followed
by a fibration. The replete bar construction $B^{\rep}(M)$  is defined as the pullback
of the diagram of commutative $\cJ$-space monoids 
\[\xymatrix{ M' \ar@{->>}[r]& M^{\gp} & B^{\cy}(M^{\gp}) \ar[l]_-{\epsilon}}\] provided by the above map
$M' \to M^{\gp}$ and the augmentation $\epsilon\colon B^{\cy}(M^{\gp}) \to
M^{\gp}$. By construction, there is a natural \emph{repletion} map
$\rho\colon B^{\cy}(M) \to B^{\rep}(M)$. 
\begin{proposition}\label{prop:BrepM-simeq-MtimesBMhJ}
  Let $M$ be a cofibrant commutative $\cJ$-space monoid and view
  $M^{\gp}$ as a left $M$-module via $M\to M^{\gp}$. There is a chain
  of natural $\cJ$-equivalences of left $M$-modules over $M^{\gp}$
  relating $B^{\rep}(M)$ and $M\times B(M_{h\cJ})$.
\end{proposition}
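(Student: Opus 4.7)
The plan is to replace $B^{\cy}(M^{\gp})$ in the pullback defining $B^{\rep}(M)$ by the more transparent model $M^{\gp}\times B(M^{\gp}_{h\cJ})$ provided by Section~\ref{sec:cyclic-bar} applied to the grouplike monoid $M^{\gp}$, then simplify the resulting pullback, and finally identify $M^{\gp}_{h\cJ}$ with $M_{h\cJ}$ after passing to $B$.

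More precisely, I would first apply Lemma~\ref{lem:bar-cyclic-equivalence} and Lemma~\ref{lem:Bcy-bar-M-decomposition} to a cofibrant replacement of $M^{\gp}$ (tracked through the functorial factorization~\eqref{eq:factorization-for-BrepM-def}). Since $M^{\gp}$ is grouplike, these produce a natural chain of $\cJ$-equivalences
\[
B^{\cy}(M^{\gp})\xleftarrow{\sim} B^{\cy}(\overline{M^{\gp}})\xrightarrow{\sim} M^{\gp}\times B(M^{\gp}_{h\cJ})
\]
of objects over $M^{\gp}$. Each term is naturally a commutative $\cJ$-space monoid over $M^{\gp}$ and therefore, by restriction of scalars along $M\to M^{\gp}$, a left $M$-module over $M^{\gp}$. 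I then pull this entire chain back along the fibration $M'\fib M^{\gp}$ appearing in the definition of $B^{\rep}(M)$. Right properness of the positive $\cJ$-model structure guarantees that the pullback sends this chain to a chain of $\cJ$-equivalences, provided one first replaces the map $M^{\gp}\times B(M^{\gp}_{h\cJ})\to M^{\gp}$ by a fibration so that every pullback in sight computes a homotopy pullback. The resulting zigzag connects $B^{\rep}(M)=M'\times_{M^{\gp}} B^{\cy}(M^{\gp})$ to $M'\times_{M^{\gp}}\bigl(M^{\gp}\times B(M^{\gp}_{h\cJ})\bigr)\iso M'\times B(M^{\gp}_{h\cJ})$, and the chain is compatible with left $M$-module structures and augmentations to $M^{\gp}$ throughout.

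It remains to relate $M'\times B(M^{\gp}_{h\cJ})$ to $M\times B(M_{h\cJ})$, which I would do coordinatewise: the acyclic cofibration $M\xrightarrow{\sim} M'$ yields a $\cJ$-equivalence in the first factor, while $M_{h\cJ}\to M^{\gp}_{h\cJ}$ is a group completion of simplicial $E_{\infty}$-monoids, so the induced map $B(M_{h\cJ})\to B(M^{\gp}_{h\cJ})$ is a weak equivalence by the group completion theorem. Both maps are evidently compatible with the left $M$-action on the first factor and the augmentation to $M^{\gp}$, closing the chain.

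The step I expect to be most delicate is upgrading the equivalences of Lemma~\ref{lem:Bcy-bar-M-decomposition} from $\cJ$-spaces to left $M$-modules over $M^{\gp}$ in a coherent way, together with the point-set bookkeeping (cofibrant replacement of $M^{\gp}$ and fibrant replacement over $M^{\gp}$) needed to ensure that the intermediate pullbacks actually compute homotopy pullbacks, so that right properness can be invoked. Once this foundational work is done, the rest is a formal assembly of the three zigzags.
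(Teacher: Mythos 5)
Your overall strategy---replace $B^{\cy}(M^{\gp})$ by the product decomposition from Section~\ref{sec:cyclic-bar} applied to the grouplike monoid $M^{\gp}$, pull back along $M'\fib M^{\gp}$, then compare $B(M^{\gp}_{h\cJ})$ with $B(M_{h\cJ})$---is in the same spirit as the paper's, and the passage through $B(M^{\gp}_{h\cJ})$ at the end is identical. The right-properness bookkeeping is also fine: in fact you do not need to replace $M^{\gp}\times B(M^{\gp}_{h\cJ})\to M^{\gp}$ by a fibration, since $M'\to M^{\gp}$ is already a fibration and that is the leg along which you base-change the weak equivalences.

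The gap is in the claim that ``each term is naturally a commutative $\cJ$-space monoid over $M^{\gp}$.'' This is false for the middle term $B^{\cy}(\overline{M^{\gp}})$. By Remark~\ref{rem:repetitive}'s neighbour Remark~2.5 (``The bar resolution functor is not lax symmetric monoidal\dots''), $\overline{M^{\gp}}$ is only an associative $\cJ$-space monoid, and the cyclic bar construction of a non-commutative monoid carries no natural monoid (or even module) structure. Consequently the pullback $M'\times_{M^{\gp}} B^{\cy}(\overline{M^{\gp}})$ has no evident left $M$-module structure, and the middle two arrows in your chain are only maps of $\cJ$-spaces, not of $M$-modules. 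This is exactly the ``delicate step'' you flagged, and it is a genuine obstruction rather than mere bookkeeping.

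The paper circumvents this by never putting the non-commutative object $B^{\cy}(\overline{M^{\gp}})$ into the module-level chain directly. Instead it first shows, via a cube of (homotopy) pullbacks, that the canonical monoid map $M\boxtimes V(M^{\gp})\to B^{\rep}(M)$ is a $\cJ$-equivalence (this uses Proposition~\ref{prop:grouplike-M-VM-equivalence} for $M^{\gp}$ and the fact that the front face of the cube is homotopy cartesian). Then it applies the free $M$-module functor $M\boxtimes(-)$ to the chain of $\cJ$-space equivalences of Proposition~\ref{prop:grouplike-V(M)-equivalence} relating $V(M^{\gp})$ and $U^{\cJ}\times B((M^{\gp})_{h\cJ})$: every object $M\boxtimes X$ carries the free left $M$-module structure, and every induced map $M\boxtimes X\to M\boxtimes Y$ is automatically an $M$-module map. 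Since $M$ is cofibrant, hence flat, $M\boxtimes(-)$ preserves $\cJ$-equivalences, which is what makes this an admissible maneuver. If you want to rescue your own plan, you should likewise package the intermediate zigzag as $M\boxtimes(\text{chain of }\cJ\text{-spaces})$ rather than as a levelwise pullback along $M'\to M^{\gp}$; as written, the module structures you need simply are not there.
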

\begin{proof}
  Given a factorization $M \to M' \to M^{\gp}$ as
  in~\eqref{eq:factorization-for-BrepM-def}, we factor the unit
  map $U^{\cJ} \to M'$ as an acyclic cofibration $U^{\cJ}\to U(M^{\gp})$
  followed by a fibration $ U(M^{\gp}) \to M'$. This  provides the lower part
  of the following commutative diagram:
 \[
  \xymatrix@-1pc{
   V(M^{\gp}) \ar[rr] \ar[d] & & B^{\rep}(M)\ar[rr] \ar[d] & & B^{\cy}(M^{\gp})\ar[d] \\
   U(M^{\gp}) \ar@{->>}[rr] & & M' \ar@{->>}[rr] & & M^{\gp}.\\
   U^{\cJ}\ar[rr] \ar@{ >->}[u]^{\sim} & & M \ar@{ >->}[u]_{\sim} 
  }
  \]
  Using the resulting factorization of the unit of $M^{\gp}$ into an
  acyclic cofibration $U^{\cJ}\to U(M^{\gp})$ followed by a fibration
  $U(M^{\gp})\to M^{\gp}$ for the definition of the commutative
  $\cJ$-space monoid $V(M^{\gp})$ studied in the last section, we
  obtain $V(M^{\gp})$ as an iterated pullback as indicated in the
  previous diagram. The above maps induce the following commutative
  cube:
  \[
  \xymatrix@-1.5pc{
    & B^{\rep}(M)\ar[rr]\ar'[d][dd] & & B^{\cy}(M^{\gp}) \ar[dd]\\
    M\boxtimes V(M^{\gp}) \ar[rr] \ar[dd]  \ar[ur] &  & M^{\gp}\boxtimes V(M^{\gp})\ar[dd]  \ar[ur] & \\
    & M' \ar'[r][rr]& & M^{\gp}. \\
    M\boxtimes U(M^{\gp}) \ar[rr] \ar[ur] & & M^{\gp}\boxtimes U(M^{\gp}) \ar[ur]
  }
  \]
  The back face is homotopy cartesian by definition, and the front
  face is homotopy cartesian by~\cite[Lemma
  2.11]{Sagave_log-on-k-theory} and~\cite[Corollary
  11.4]{Sagave-S_diagram}. The map in the upper right hand corner is a
  $\cJ$-equivalence by
  Proposition~\ref{prop:grouplike-M-VM-equivalence} (applied to
  $M^{\gp}$), and the maps in the lower corners are $\cJ$-equivalences
  by construction. It follows that $M \boxtimes V(M^{\gp}) \to
  B^{\rep}(M)$ is also a $\cJ$-equivalence.

  Extending the $\cJ$-space maps in the chain of $\cJ$-equivalences of
  Proposition~\ref{prop:grouplike-V(M)-equivalence} (applied to
  $M^{\gp}$) to $M$-module maps shows that there is a chain of
  $\cJ$-equivalences of $M$-modules over $M^{\gp}$ relating $M
  \boxtimes V(M^{\gp})$ and $M\times B((M^{\gp})_{h\cJ})$. Since
  $B(M_{h\cJ}) \to B((M^{\gp})_{h\cJ})$ is a weak equivalence, the
  claim follows.
\end{proof}
Let $f\colon M\to N$ be a map of commutative $\cJ$-space monoids. In
Section~\ref{sec:elog-etaleness} we shall be interested in the diagram
of commutative symmetric ring spectra
\begin{equation}\label{eq:M-BrepM-diagram}
  \xymatrix@-1pc{
    \bS^{\cJ}[M] \ar[r]\ar[d] & \bS^{\cJ}[B^{\rep}(M)]\ar[d]\\
    \bS^{\cJ}[N] \ar[r] & \bS^{\cJ}[B^{\rep}(N)]
  }
\end{equation}
induced by $f$. In order to measure to what extent this square is
homotopy cocartesian in $\cC\Spsym$, we use the following terminology: Given a
symmetric spectrum $E$, a commutative diagram of commutative symmetric
ring spectra
\[
\xymatrix@-1pc{
  A \ar[r] \ar[d] & B\ar[d]\\
  C\ar[r] & D }
\]
is \emph{$E_*$-homotopy cocartesian} if whenever
we factor $A\to C$ as a cofibration $A\to C'$ followed by a stable
equivalence $C'\to C$ of commutative symmetric ring spectra, the
induced map $C'\wedge_AB\to D$ is an $E_*$-equivalence.

\begin{proposition}\label{prop:M-N-rep-homotopy-cocartesian}
  For a symmetric spectrum $E$ and a map of cofibrant commutative
  $\cJ$-space monoids $f\colon M\to N$, the diagram
  \eqref{eq:M-BrepM-diagram} is $E_*$-homotopy cocartesian if and only if $f$ gives rise to an
  $E_*$-equivalence
  \[
  \bS^{\cJ}[N]\wedge B(M_{h\cJ})_+\to \bS^{\cJ}[N]\wedge
  B(N_{h\cJ})_+.
  \]
\end{proposition}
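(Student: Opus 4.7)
The plan is to use Proposition~\ref{prop:BrepM-simeq-MtimesBMhJ} applied to both $M$ and $N$ to replace the right-hand column of~\eqref{eq:M-BrepM-diagram} by a column involving $\bS^{\cJ}[M]\wedge B(M_{h\cJ})_+$ and $\bS^{\cJ}[N]\wedge B(N_{h\cJ})_+$, and then to identify the resulting derived pushout with $\bS^{\cJ}[N]\wedge B(M_{h\cJ})_+$ so that the cocartesianness condition becomes exactly the $E_*$-equivalence condition in the statement.

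First I would invoke Proposition~\ref{prop:BrepM-simeq-MtimesBMhJ} for both $M$ and $N$ to obtain natural chains of $\cJ$-equivalences of left $M$-modules (respectively $N$-modules)
\[
B^{\rep}(M)\simeq M\times B(M_{h\cJ}) \quad\text{and}\quad B^{\rep}(N)\simeq N\times B(N_{h\cJ}),
\]
compatible via $f$. Since $\bS^{\cJ}[-]$ is strong symmetric monoidal and the levelwise product $M\times B(M_{h\cJ})$ coincides with $M\boxtimes F^{\cJ}_{(\bld 0,\bld 0)}(B(M_{h\cJ}))$, there are natural isomorphisms
\[
\bS^{\cJ}[M\times B(M_{h\cJ})]\iso \bS^{\cJ}[M]\wedge B(M_{h\cJ})_+
\]
as left $\bS^{\cJ}[M]$-modules, and similarly for $N$. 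After suitable cofibrant replacements so that $\bS^{\cJ}[-]$ takes the $\cJ$-equivalences from Proposition~\ref{prop:BrepM-simeq-MtimesBMhJ} to stable equivalences, the square~\eqref{eq:M-BrepM-diagram} is connected by a zig-zag of stable equivalences to
\[
\xymatrix@-1pc{
  \bS^{\cJ}[M] \ar[r]\ar[d] & \bS^{\cJ}[M]\wedge B(M_{h\cJ})_+\ar[d]\\
  \bS^{\cJ}[N] \ar[r] & \bS^{\cJ}[N]\wedge B(N_{h\cJ})_+.
}
\]
Replacing $\bS^{\cJ}[M]\to \bS^{\cJ}[N]$ by a cofibration of commutative symmetric ring spectra, the pushout of the upper row along the left column becomes $\bS^{\cJ}[N]\wedge_{\bS^{\cJ}[M]}(\bS^{\cJ}[M]\wedge B(M_{h\cJ})_+)\iso \bS^{\cJ}[N]\wedge B(M_{h\cJ})_+$, and the induced comparison map to the lower-right corner is precisely $\bS^{\cJ}[N]\wedge B(f_{h\cJ})_+$. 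Thus~\eqref{eq:M-BrepM-diagram} is $E_*$-homotopy cocartesian if and only if this latter map is an $E_*$-equivalence, which is the stated condition.

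The main obstacle is not conceptual but rather bookkeeping: Proposition~\ref{prop:BrepM-simeq-MtimesBMhJ} delivers its equivalence through a zig-zag of $\cJ$-space maps that is only compatible with left $M$-module structures, not with commutative $\cJ$-space monoid structures, and one must arrange cofibrant/flat replacements so that $\bS^{\cJ}[-]$ and subsequent smash products over $\bS^{\cJ}[M]$ are homotopy invariant along this zig-zag while still respecting the maps induced by $f$. This is handled by the flatness properties of cofibrant commutative $\cJ$-space monoids established in~\cite{Sagave-S_diagram} together with the standard monoidality of $\bS^{\cJ}[-]$; with these in hand the argument above becomes a direct translation.
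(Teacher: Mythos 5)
Your proposal is correct and follows essentially the same route as the paper: reduce to the map into the lower-right corner, use Proposition~\ref{prop:BrepM-simeq-MtimesBMhJ} to replace the replete bar constructions by the products $M\times B(M_{h\cJ})$ and $N\times B(N_{h\cJ})$, identify $\bS^{\cJ}[M\times B(M_{h\cJ})]$ with $\bS^{\cJ}[M]\wedge B(M_{h\cJ})_+$ using strong monoidality of $\bS^{\cJ}$, and compute the derived pushout as $\bS^{\cJ}[N]\wedge B(M_{h\cJ})_+$. One small difference in the bookkeeping you flag at the end: the paper does not introduce further cofibrant replacements of the intermediate $\cJ$-spaces in the zig-zag. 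Instead it observes that the entire chain from Proposition~\ref{prop:BrepM-simeq-MtimesBMhJ} is naturally augmented over the cofibrant commutative $\cJ$-space monoid $M^{\gp}$, and invokes~\cite[Corollary~8.8]{RSS_LogTHH-I} to conclude directly that $\bS^{\cJ}$ carries those $\cJ$-equivalences to stable equivalences; combined with~\cite[Lemma~4.8]{RSS_LogTHH-I} (extension of scalars preserves stable equivalences) this dispatches the point you handle by ``suitable cofibrant replacements'' without risking incompatibility of the replacements with the $M$-module structures and the map $f$.
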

\begin{proof}
  Without loss of generality, we may assume that $f$ is a cofibration.
  Then the diagram~\eqref{eq:M-BrepM-diagram} is $E_*$-homotopy cocartesian
  if and only if the map 
  \begin{equation}\label{eq:M-N-rep-homotopy-cocartesian-map}
    \bS^{\cJ}[N]\sm_{\bS^{\cJ}[M]} \bS^{\cJ}[B^{\rep}(M)] \to
    \bS^{\cJ}[B^{\rep}(N)]\end{equation} is an $E_*$-equivalence. By the argument
  given in the proof of~\cite[Lemma~4.8]{RSS_LogTHH-I}, the
  extension of scalars functor $\bS^{\cJ}[N]\sm_{\bS^{\cJ}[M]}(-)$ preserves
  stable equivalences. Since the $\cJ$-equivalences of $M$-modules in
  Proposition~\ref{prop:BrepM-simeq-MtimesBMhJ} are augmented over the cofibrant
  commutative $\cJ$-space monoid $M^{\gp}$, it follows from~\cite[Corollary~8.8]{RSS_LogTHH-I} that $\bS^{\cJ}$ maps
  these $\cJ$-equivalences to stable equivalences. Hence the map~\eqref{eq:M-N-rep-homotopy-cocartesian-map} is stably equivalent to the map 
  \[\bS^{\cJ}[N]\sm_{\bS^{\cJ}[M]} (\bS^{\cJ}[M] \sm B(M_{h\cJ})_+) \to
     \bS^{\cJ}[N] \sm B(N_{h\cJ})_+,
 \] 
 and the domain of this map is isomorphic to $\bS^{\cJ}[N] \sm
 B(M_{h\cJ})_+$.\end{proof} 

\begin{notation}\label{not:J-degree-parts}
For each integer~$n$, let $\cJ_n \subset \cJ$ be the full subcategory
generated by the objects $(\bm_1, \bm_2)$ with $m_2-m_1 = n$.  Then
$B\cJ =\coprod_n B\cJ_n$ and we refer to the part of a $\cJ$-space $X$
that maps to $B\cJ_n$ as the \emph{$\cJ$-degree~$n$ part} of $X$.  If
$M$ is a commutative $\cJ$-space monoid, we let $M_{\{0\}}$ and
$B^{\rep}_{\{0\}}(M)$ denote the $\cJ$-degree $0$ parts of $M$ and
$B^{\rep}(M)$, respectively. We also use the notations
$M_{\ge0}$ and $M_{>0}$ for the non-negative and positive $\cJ$-degree
parts of $M$, respectively,
cf.~\cite[Definition~6.1]{RSS_LogTHH-I}.
\end{notation}

\begin{remark}\label{rem:repetitive} Let $M$ be a commutative $\cJ$-space monoid that is
  \emph{repetitive} in the sense
  of~\cite[Definition~6.4]{RSS_LogTHH-I}. By
  definition, this means that $M \neq M_{\{0\}}$ and that the group
  completion map $M \to M^{\gp}$ induces a $\cJ$-equivalence $M \to
  (M^{\gp})_{\geq 0}$.
  Propositions~\ref{prop:grouplike-M-VM-equivalence} and
  \ref{prop:BrepM-simeq-MtimesBMhJ} can be used to identify the
  homotopy cofiber of the map of
  $\bS^{\cJ}[B^{\cy}(M_{\{0\}})]$-module
  spectra \begin{equation}\label{ea:BcyM0-Brep0M}
    \bS^{\cJ}[B^{\cy}(M_{\{0\}})] \xrightarrow{\bS^{\cJ}[\sigma]}
    \bS^{\cJ}[B^{\rep}_{\{0\}}(M)]\end{equation} with $\Sigma\
  \bS^{\cJ}[B^{\cy}(M_{\{0\}})]$.  The idea for this is to use that
  the homotopy cofiber in question is equivalent to the homotopy
  cofiber of the map \[\bS^{\cJ}[M_{\{0\}}] \sm
  B((M_{\{0\}})_{h\cJ})_+ \to \bS^{\cJ}[M_{\{0\}}] \sm
  B(M_{h\cJ})_+.\] An application of the Bousfield--Friedlander
  theorem shows that there is a homotopy fiber
  sequence \[B((M_{\{0\}})_{h\cJ}) \to B(M_{h\cJ}) \to B(d\bN_0),\]
  where $d$ is the period of $M$, as
  in~\cite[Definition~6.5]{RSS_LogTHH-I}. Hence
  we can recognize the homotopy cofiber of~\eqref{ea:BcyM0-Brep0M}, as
  claimed.

  Using this argument in place of~\cite[Proposition~6.11]{RSS_LogTHH-I} leads to a
  slightly different proof for the localization homotopy cofiber
  sequences established in~\cite{RSS_LogTHH-I}. However, the
  disadvantage of the alternative approach outlined here is that it does
  not identify the homotopy cofiber as a cyclic object.
\end{remark}
\section{Logarithmic topological Hochschild homology}\label{sec:log-thh}
Let $A$ be a commutative symmetric ring spectrum. A \emph{pre-log
  structure} $(M,\alpha)$ on $A$ is a commutative $\cJ$-space monoid
$M$ together with a map $\alpha \colon M \to \Omega^{\cJ}(A)$;
see~\cite[Definition~4.1]{RSS_LogTHH-I}. The
ring spectrum $A$ together with a chosen pre-log structure is called a
\emph{pre-log ring spectrum} and will be denoted by $(A,M,\alpha)$ or
just $(A,M)$. (As explained
in~\cite[Remark~4.2]{RSS_LogTHH-I},
this terminology differs from the one used in~\cite{Rognes_TLS} in
that we use $\cJ$-spaces, and from~\cite[\S 4.30]{Sagave-S_diagram}
and~\cite{Sagave_log-on-k-theory} in that we skip the additional word
\emph{graded} used there.)

A basic example of a pre-log structure is the free pre-log structure
generated by a $0$-simplex $x \in
\Omega^{\cJ}(A)(\bld{d_1},\bld{d_2})$. It is given by the map
\[ \C{\bld{d_1},\bld{d_2}} = \textstyle\coprod_{k\geq 0}\left(
  F^{\cJ}_{(\bld{d_1},\bld{d_2})}(*)^{\boxtimes
    k}/\Sigma_k \right) \to \Omega^{\cJ}(A)\] from the free commutative $\cJ$-space
monoid $\C{\bld{d_1},\bld{d_2}}$ on a generator in
bidegree~$(\bld{d_1},\bld{d_2})$ determined by $x$. We often write
$\C{x}$ for $\C{\bld{d_1},\bld{d_2}}$ when discussing this map.

A more interesting kind of pre-log structure arises as follows: If
$j\colon e \to E$ is the connective cover map of a positive fibrant
commutative symmetric ring spectrum, then the pullback
$j_*\!\GLoneJof(E)$ of $\GLoneJof(E)\to \Omega^{\cJ}(E) \ot
\Omega^{\cJ}(e)$ defines a pre-log structure $j_*\!\GLoneJof(E) \to
\Omega^{\cJ}(e)$ on $e$. We call this the \emph{direct image}
pre-log structure on~$e$ induced by the trivial pre-log structure
on~$E$.

\begin{definition}\cite[Definition~4.6]{RSS_LogTHH-I}
Let $(A,M)$ be a pre-log ring spectrum. Its \emph{logarithmic
  topological Hochschild homology} $\THH(A,M)$ is the commutative
symmetric ring spectrum given by the pushout of the diagram
\[ \THH(A) \ot \bS^{\cJ}[B^{\cy}(M^{\mathrm{cof}})] \to
\bS^{\cJ}[B^{\rep}(M^{\mathrm{cof}})] \] of commutative symmetric ring
spectra. Here $(A^{\mathrm{cof}},M^{\mathrm{cof}}) \to (A,M)$ is a
cofibrant replacement and $\THH(A) = B^{\cy}(A^{\mathrm{cof}})$ is the
topological Hochschild homology of~$A$, defined as the cyclic bar
construction of $A^{\mathrm{cof}}$ with respect to the smash product
$\sm$. The left hand map is induced by the identification
$\bS^{\cJ}[B^{\cy}(M^{\mathrm{cof}})] \xrightarrow{\cong} \THH(\bS^{\cJ}[M^{\mathrm{cof}}])$ and the
adjoint structure map $\bS^{\cJ}[M^{\mathrm{cof}}] \to
A^{\mathrm{cof}}$ of $(A^{\mathrm{cof}},M^{\mathrm{cof}})$. The right
hand map is induced by the repletion map $\rho \colon
B^{\cy}(M^{\mathrm{cof}}) \to B^{\rep}(M^{\mathrm{cof}})$.
\end{definition}

When computing $\THH(A,M)$ in examples, it will be convenient to work
with pre-log structures $(M,\alpha)$ such that the homology of the
space $M_{h\cJ}$ associated with~$M$ is well understood. To obtain
interesting examples of such pre-log structures, we
review~\cite[Construction 4.2]{Sagave_log-on-k-theory}:
\begin{construction}\label{constr:D-x}
  Let $E$ be a positive fibrant commutative symmetric ring spectrum
  that is $d$-periodic, i.e., $\pi_*(E)$ has a unit of positive degree
  and the natural number $d$ is the minimal positive degree of a unit
  in $\pi_*(E)$. We also assume not to be in the degenerate case where
  $\pi_*(E)$ is the zero ring. Let $j\colon e \to E$ be the connective
  cover map and assume that $e$ is also positive fibrant.  Then there
  exists an object $(\bld{d_1},\bld{d_2})$ of $\cJ$ with $d_1 > 0$ and
  a map $x\colon S^{d_2} \to e_{d_1}$ such that $d=d_2 -d_1$ and the
  homotopy class $[x] \in \pi_{d}(e)$ represented by $x$ is mapped to
  a periodicity element in $\pi_*(E)$.

  In this general situation we will build a pre-log structure $D(x) \to
  \Omega^{\cJ}(e)$. The next diagram outlines its construction:
  \begin{equation}\label{eq:graded-direct-pre-log-from-x}
    \xymatrix@-1pc{
    \C{x} \ar@{ >->}[dr]\ar@/^.7pc/@<1ex>[ddrrrr]\ar@/^-.7pc/@<-1ex>[dddrr]\\
     & D(x) \ar@{->>}[dr]^{\sim}\\ 
      &&  D'(x)\ar[rr]\ar[d] & & \Omega^{\cJ}(e)\ar[d]\\ 
      && \C{x}^{\gp}\ar[r]& \GLoneJof(E)\ar[r]& \Omega^{\cJ}(E).
    }
  \end{equation}  
  We start with the free pre-log structure $\C{x}$ on $e$ generated by
  $x$.  The composite of its structure map $\C{x}\to \Omega^{\cJ}(e)$
  with $\Omega^{\cJ}(e) \to \Omega^{\cJ}(E)$ factors through
  $\GLoneJof(E) \to \Omega^{\cJ}(E)$ because $x$ becomes a unit in
  $\pi_*(E)$. We then factor the resulting map $\C{x} \to
  \GLoneJof(E)$ in the group completion model structure
  of~\cite{Sagave_spectra-of-units} as an acyclic cofibration $\C{x}
  \to \C{x}^{\gp}$ followed by a fibration $\C{x}^{\gp} \to
  \GLoneJof(E)$. The intermediate object $\C{x}^{\gp}$ is fibrant in
  the group completion model structure because, by construction, it
  comes with a fibration to the fibrant object $\GLoneJof(E)$. Hence the
  acyclic cofibration $\C{x} \to \C{x}^{\gp}$ is indeed a model for
  the group completion of $\C{x}$.  The commutative $\cJ$-space monoid
  $D'(x)$ is defined to be the pullback of
  \[
  \C{x}^{\gp}\to \Omega^{\cJ}(E) \ot \Omega^{\cJ}(e).
  \]
  In a final step, we define $D(x)$ by the indicated factorization of
  $\C{x} \to D'(x)$, now in the positive $\cJ$-model structure.  We
  call $D(x) \to \Omega^{\cJ}(e)$ the \emph{direct image pre-log
    structure} generated by $x$. We note that $D(x)$ is cofibrant
  since $\C{x}$ is cofibrant. Moreover, $D(x)$ is repetitive in the
  sense of Remark~\ref{rem:repetitive} since
  $\Omega^{\cJ}(e)_{\ge 0} \to \Omega^{\cJ}(E)_{\ge 0}$ is a
  $\cJ$-equivalence.
\end{construction}
It follows that there is a sequence of maps of pre-log ring spectra 
\begin{equation} \label{eq:Cx-Dx-iGloneJof-sequence}
(e,\C{x}) \to (e, D(x)) \to (e, j_*\!\GLoneJof(E)) \to (E,
\GLoneJof(E)).
\end{equation}
The significance of Construction~\ref{constr:D-x} for log $\THH$ stems
from the following results.
\begin{proposition}\label{prop:logification-of-D-of-x}
The map $(e, D(x)) \to (e, j_*\!\GLoneJof(E))$ in~\eqref{constr:D-x}
induces a stable equivalence $\THH(e, D(x)) \xrightarrow{\sim} \THH(e,
j_*\!\GLoneJof(E))$.
\end{proposition}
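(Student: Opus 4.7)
The plan is to invoke Theorem~4.24 of \cite{RSS_LogTHH-I} on the invariance of log $\THH$ under logification, together with an identification of $(e, D(x))$ and $(e, j_*\!\GLoneJof(E))$ as having a common logification up to $\cJ$-equivalence. Because log $\THH$ is invariant under passage from a pre-log ring spectrum to its logification, any map of pre-log ring spectra that becomes a $\cJ$-equivalence after logification induces a stable equivalence on log $\THH$, and this will suffice.

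First, I would check that $(e, j_*\!\GLoneJof(E))$ is itself a log ring spectrum, so that its logification is equivalent to itself. Since $j_*\!\GLoneJof(E)$ is defined as the pullback of $\GLoneJof(E) \to \Omega^{\cJ}(E) \leftarrow \Omega^{\cJ}(e)$ and since a commutative ring spectrum map sends units to units, the preimage of $\GLoneJof(e) \subset \Omega^{\cJ}(e)$ under the structure map $j_*\!\GLoneJof(E) \to \Omega^{\cJ}(e)$ is $\cJ$-equivalent to $\GLoneJof(e)$ itself. This is precisely the log condition.

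Second, I would identify the logification $D(x)^a$ of $D(x)$. By the definition of logification, this is formed by a homotopy pushout that formally adjoins $\GLoneJof(e)$ to $D(x)$ along $\alpha^{-1}(\GLoneJof(e))$. Using that $D(x)$ is repetitive---which holds by construction because $\Omega^{\cJ}(e)_{\ge 0} \to \Omega^{\cJ}(E)_{\ge 0}$ is a $\cJ$-equivalence---and that $\C{x}^{\gp}$ models the group completion of the free commutative $\cJ$-space monoid on~$x$, I would verify that the natural comparison map $D(x)^a \to j_*\!\GLoneJof(E)$ is a $\cJ$-equivalence. The essential input is that $\pi_*(\GLoneJof(E))$ is generated, in the appropriate graded sense, by $\pi_*(\GLoneJof(e))$ together with the periodicity class $[x]$, because $E$ is $d$-periodic with periodicity element $x$; one then chases this through the pullback defining $j_*\!\GLoneJof(E)$.

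The main technical obstacle is the second step: carrying out the diagram chase through the pullback squares that define $D(x)$ and $j_*\!\GLoneJof(E)$ and verifying that adjoining $\GLoneJof(e)$ to $D(x)$ produces exactly $j_*\!\GLoneJof(E)$ up to $\cJ$-equivalence. This is essentially a statement about commutative $\cJ$-space monoids and their group completions, combining the universal property of $\C{x}^{\gp}$ with the periodicity hypothesis on~$E$. Once the identification $D(x)^a \simeq j_*\!\GLoneJof(E)$ is in hand, two applications of \cite[Theorem~4.24]{RSS_LogTHH-I} yield stable equivalences $\THH(e, D(x)) \xrightarrow{\sim} \THH(e, D(x)^a) \simeq \THH(e, j_*\!\GLoneJof(E))$, the last identification using that $(e, j_*\!\GLoneJof(E))$ is already a log ring spectrum.
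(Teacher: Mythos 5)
Your strategy coincides with the paper's: identify the map $(e, D(x)) \to (e, j_*\!\GLoneJof(E))$ as (stably equivalent to) the logification map and then invoke \cite[Theorem~4.24]{RSS_LogTHH-I}. The only difference is that the paper obtains the identification directly by citing \cite[Lemma~4.7]{Sagave_log-on-k-theory}, whereas you sketch a proof of that lemma; your sketch is sound (and note that only one application of Theorem~4.24 is needed, the second step being merely the statement that a log ring spectrum is its own logification).
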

\begin{proof}
  By~\cite[Lemma 4.7]{Sagave_log-on-k-theory}, the map $(e, D(x)) \to
  (e, j_*\!\GLoneJof(E))$ is stably equivalent to the logification
  map. So~\cite[Theorem~4.24]{RSS_LogTHH-I} implies that it induces a
  stable equivalence when applying log $\THH$.
\end{proof}

\begin{theorem}\label{thm:localization-seq-for-Dx}
  In the situation of Construction~\ref{constr:D-x}, there is a natural
  homotopy cofiber sequence
  \begin{equation} \label{eq:cofseqedx} \THH(e)
    \overset{\rho}\longto \THH(e, D(x)) \overset{\partial}\longto \Sigma \THH(\postnikovsec{e}{0}{d})
  \end{equation}
  of $\THH(e)$-modules with circle action, where $\rho$ is a map of commutative
  symmetric ring spectra and $\postnikovsec{e}{0}{d}$ is the
  $(d-1)$-st Postnikov section of $e$. \end{theorem}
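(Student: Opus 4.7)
The plan is to reduce this theorem to results already established in the first paper of the series together with Proposition~\ref{prop:logification-of-D-of-x}. The first paper produces, as recorded in~\eqref{eq:loc-sequ-intro}, the natural homotopy cofiber sequence
\[
\THH(e) \longto \THH(e, j_*\!\GLoneJof(E)) \longto \Sigma\,\THH(\postnikovsec{e}{0}{d})
\]
of $\THH(e)$-modules with circle action, in which the leftmost arrow is induced by the unique map of pre-log ring spectra $(e, U^{\cJ})\to (e, j_*\!\GLoneJof(E))$ from the trivial pre-log structure on~$e$.

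The first step is to invoke Proposition~\ref{prop:logification-of-D-of-x}, which supplies a stable equivalence $\THH(e, D(x))\xrightarrow{\sim}\THH(e, j_*\!\GLoneJof(E))$ induced by the pre-log ring spectrum map $(e, D(x))\to (e, j_*\!\GLoneJof(E))$ appearing in the sequence~\eqref{eq:Cx-Dx-iGloneJof-sequence}. By naturality of log $\THH$ in the pre-log ring spectrum, this equivalence fits into a commutative triangle together with the canonical maps from $\THH(e)$ induced by the maps of pre-log ring spectra $(e,U^\cJ) \to (e, D(x)) \to (e, j_*\!\GLoneJof(E))$. Since the underlying commutative symmetric ring spectrum is $e$ throughout and no modification is made to the cyclic structure, the equivalence is automatically compatible with the $\THH(e)$-module structure and the $S^1$-action.

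The second step is to transport the sequence of the first paper through this stable equivalence. Because stable equivalences preserve homotopy cofiber sequences and the outer terms $\THH(e)$ and $\Sigma\,\THH(\postnikovsec{e}{0}{d})$ are not affected by the replacement, this immediately yields the cofiber sequence~\eqref{eq:cofseqedx}. The map $\rho$ is then identified with the map of commutative symmetric ring spectra induced by the pre-log ring spectrum map $(e, U^{\cJ}) \to (e, D(x))$, and the boundary $\partial$ is obtained as the corresponding composition.

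I do not anticipate any substantial obstacle, since the essential content of the argument has been absorbed into Proposition~\ref{prop:logification-of-D-of-x} and the main localization theorem of~\cite{RSS_LogTHH-I}. The only point requiring care is the simultaneous compatibility of the several structures involved (the $\THH(e)$-module structure, the $S^1$-action, and the commutative symmetric ring spectrum structure on $\rho$), but each of these is a formal consequence of the appropriate functoriality of the cyclic and replete bar constructions under maps of pre-log ring spectra. An alternative route, which bypasses the invocation of the first paper's cofiber sequence, would be to apply the decomposition of $B^{\rep}(D(x))$ provided by Proposition~\ref{prop:BrepM-simeq-MtimesBMhJ} directly (as sketched in Remark~\ref{rem:repetitive}), using that $D(x)$ is repetitive of period $d$; this yields the same sequence but at the cost of losing the identification of the cofiber as a cyclic object.
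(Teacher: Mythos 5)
Your argument is essentially the second route the paper indicates in its one-line proof: it combines the localization homotopy cofiber sequence for $(e, j_*\!\GLoneJof(E))$ established in~\cite{RSS_LogTHH-I} (recorded here as~\eqref{eq:loc-sequ-intro}) with the logification invariance of Proposition~\ref{prop:logification-of-D-of-x}. You spell out the naturality and structure-compatibility checks that the paper leaves implicit, and these are correct; your closing remark about the alternative via Proposition~\ref{prop:BrepM-simeq-MtimesBMhJ} and Remark~\ref{rem:repetitive} also matches what the paper says there.
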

\begin{proof}
  This follows by combining \cite[Theorem~6.10 and Lemma~6.16]{RSS_LogTHH-I} or by
  combining \cite[Theorem~6.18]{RSS_LogTHH-I}
  with Proposition~\ref{prop:logification-of-D-of-x}.
\end{proof}

For later use we record a more explicit description of the homotopy
type of $D(x)$.  Since $e$ and $E$ are assumed to be positive fibrant
and $j\colon e \to E$ is the connective cover map, the induced map
$\Omega^{\cJ}(e \to E)(\bld{m_1},\bld{m_2})$ is a weak
equivalence if $m_2 - m_1 \geq 0$ and $m_1 > 0$. Moreover,
$D(x)(\bld{m_1},\bld{m_2})$ is empty if $m_2 -m_1<0$ because the
negative-dimensional units of $\pi_*(E)$ are not in the image of
the map from $\pi_*(e)$. This argument implies~\cite[Lemma 4.6]{Sagave_log-on-k-theory}
which we reproduce here for the reader's convenience:
\begin{lemma}\label{lem:hty-type-of-Dx}
  The chain of maps $\C{x}_{h\cJ} \to D(x)_{h\cJ} \to
  (\C{x}^{\gp})_{h\cJ}$ is weakly equivalent to $\coprod_{k\geq
    0}B\Sigma_k \to Q_{\geq 0}S^0 \to QS^0$. The latter chain is the
  canonical factorization of the group completion map through the
  inclusion of the non-negative components of $QS^0$.  In particular,
  $D(x) \to \C{x}^{\gp}$ induces a $\cJ$-equivalence $D(x) \to
  (\C{x}^{\gp})_{\geq 0}$.\qed
\end{lemma}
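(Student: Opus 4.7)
The plan is to identify each of the three homotopy types separately and then check that they assemble into the claimed chain. The two outer identifications are general consequences of the theory of commutative $\cJ$-space monoids, while the middle one is the main point and uses the specific pullback definition of $D(x)$.

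For the outer terms, recall from the excerpt that $\cC\cS^{\cJ}$ with the positive $\cJ$-model structure is Quillen equivalent to $E_\infty$ spaces over $B\cJ \simeq QS^0$. Under this equivalence the free commutative $\cJ$-space monoid $\C{x}$ on a generator in positive bidegree corresponds to the free $E_\infty$ space on a point. A direct calculation, using the presentation $\C{x} = \coprod_{k\geq 0} F^{\cJ}_{(\bld{d_1},\bld{d_2})}(*)^{\boxtimes k}/\Sigma_k$, the good behaviour of the positive $\cJ$-model structure with respect to symmetric products, and the contractibility of the comma category $(\bld{kd_1},\bld{kd_2})\downarrow\cJ$ (it has an initial object), then yields $\C{x}_{h\cJ}\simeq \coprod_{k\geq 0} B\Sigma_k$. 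Since the group-completion $\C{x}\to\C{x}^{\gp}$ on the $\cJ$-space side realizes a group completion of the associated $E_\infty$ space $M_{h\cJ}$, the Barratt--Priddy--Quillen theorem identifies $(\C{x}^{\gp})_{h\cJ}$ with $QS^0$, and the map $\C{x}_{h\cJ}\to(\C{x}^{\gp})_{h\cJ}$ with the canonical inclusion $\coprod_{k\geq 0}B\Sigma_k\to QS^0$.

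For the middle term, I would strengthen the statement and show directly that $D(x)\to\C{x}^{\gp}$ is a $\cJ$-equivalence onto the non-negative part $(\C{x}^{\gp})_{\geq 0}$. Up to an acyclic fibration, $D(x)$ coincides with the pullback $D'(x)=\C{x}^{\gp}\times_{\Omega^{\cJ}(E)}\Omega^{\cJ}(e)$ of Construction~\ref{constr:D-x}. The two connectivity observations recalled in the paragraph immediately preceding the lemma then give: at objects $(\bld{m_1},\bld{m_2})$ with $m_1>0$ and $m_2-m_1\geq 0$, the map $\Omega^{\cJ}(j)$ is a level equivalence, so the pullback collapses to $\C{x}^{\gp}$ level-wise; and at objects with $m_2-m_1<0$, the space $D(x)(\bld{m_1},\bld{m_2})$ is empty. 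Since the positive $\cJ$-model structure only tests weak equivalences at objects with $m_1>0$, these two facts together yield the desired $\cJ$-equivalence $D(x)\to(\C{x}^{\gp})_{\geq 0}$. Applying $(-)_{h\cJ}$ and restricting the identification $(\C{x}^{\gp})_{h\cJ}\simeq QS^0$ to the non-negative $\cJ$-degree components of $B\cJ$ then gives $D(x)_{h\cJ}\simeq Q_{\geq 0}S^0$.

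The main obstacle will be verifying compatibility: the three separate identifications should fit together into an honest zigzag of weak equivalences of spaces (under $B\cJ\simeq QS^0$) that realizes the canonical factorization $\coprod_{k\geq 0}B\Sigma_k\to Q_{\geq 0}S^0\to QS^0$. This is largely formal once one works with canonical models, but requires simultaneously tracking the Quillen equivalence, the group completion, and the connectivity truncation; the content of the lemma is contained in the connectivity argument above.
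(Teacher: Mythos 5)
Your proposal follows essentially the same route as the paper: the paper gives no separate proof of this lemma but supplies exactly the two observations you use (the positive-level equivalence $\Omega^{\cJ}(e)(\bm_1,\bm_2)\to\Omega^{\cJ}(E)(\bm_1,\bm_2)$ for $m_2-m_1\ge0$, $m_1>0$, and the emptiness of $D(x)$ in negative $\cJ$-degree) in the paragraph preceding the statement, then points to \cite{Sagave_log-on-k-theory}*{Lemma~4.6} for the record. Your outer identifications of $\C{x}_{h\cJ}$ and $(\C{x}^{\gp})_{h\cJ}$ are the standard Barratt--Priddy--Quillen facts the paper takes as understood, and your connectivity argument for the middle term is precisely the paper's. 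One small caveat: the step ``the positive $\cJ$-model structure only tests weak equivalences at $m_1>0$'' is not an accurate description of the model structure (its weak equivalences are $\cJ$-equivalences, defined via $(-)_{h\cJ}$, not positive level equivalences); the correct justification is the homotopy cofinality of the positive objects in $\cJ$, which is what makes a positive level equivalence of $\cJ$-spaces into a $\cJ$-equivalence.
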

Hence the homotopy type of $D(x)_{h\cJ} \simeq Q_{\geq 0}S^0$
does not depend on the map of spectra $e \to E$ used to construct
$D(x)$. The structure map \[Q_{\geq 0}S^0 \xrightarrow{\sim} D(x)_{h\cJ} \to (\mathrm{const}_{\cJ}*)_{h\cJ} \xrightarrow{\iso} B\cJ \xrightarrow{\sim} QS^0\] is multiplication by the degree $d = d_2 -d_1$ of $x\colon S^{d_2}\to e_{d_1}$.

\section{The graded Thom isomorphism}\label{sec:graded-Thom}
As another preparatory step for computing $\THH(A,M)$, we explain how
to compute the homology of the spectrum $\bS^{\cJ}[M]$ for $M=D(x)$
and related examples.  The key idea for this, worked out by the last
two authors in~\cite{Sagave-S_graded-Thom}, is to express
$\bS^{\cJ}[M]$ as the Thom spectrum of the virtual vector bundle
classified by the composite \[M_{h\cJ} \to B\cJ \xrightarrow{\sim}
QS^{0} \to \bZ \times BO\] of the structure map $M_{h\cJ} \to B\cJ$
induced by applying $(-)_{h\cJ}$ to the map from~$M$ to the terminal
$\cJ$-space, the weak equivalence $B\cJ\to QS^{0}$, and the map of
infinite loop spaces $QS^{0} \to \bZ \times BO$ induced by the unit
$\bS \to ko$. In the case where $M = \C{\bld{d_1},\bld{d_2}}^{\gp}$
with $d_2-d_1$ even it is proved in~\cite{Sagave-S_graded-Thom} that
$M$ is orientable in the strong sense that there exists a map of
commutative symmetric ring spectra $\bS^{\cJ}[M] \to H\bZ P$. Here
$H\bZ P$ denotes a cofibrant and even periodic version of the integral
Eilenberg--Mac Lane spectrum, i.e., the underlying symmetric spectrum
of $H\bZ P$ decomposes as $H\bZ P=\bigvee_{n \in 2\bZ}H\bZ P_{\{n\}}$
where $H\bZ P_{\{0\}} = H\bZ$ and $H\bZ P_{\{n\}}=\Sigma^n H\bZ$. If
$M$ is a commutative $\cJ$-space monoid that is concentrated in even
$\cJ$-degrees, then the monoid structure of $M_{h\cJ}$ and the
multiplication of $H\bZ P$ equip $\bigvee_{n \in 2\bZ} (M_{h\cJ_{n}})_+
\sm H\bZ P_{\{n\}}$ with the structure of a symmetric ring spectrum.

In~\cite{Sagave-S_graded-Thom}, the following statement is derived
from a more general graded Thom isomorphism theorem
(in~\cite{Sagave-S_graded-Thom}, also $E_{\infty}$ structures are
addressed):
 
\begin{proposition}\cite[Proposition~8.3]{Sagave-S_graded-Thom}\label{prop:graded-thom}
  Let $(\bld{d_1},\bld{d_2})$ be an object of $\cJ$ of even
  $\cJ$-degree $d_2 - d_1$, and let $M \to
  \C{\bld{d_1},\bld{d_2}}^{\gp}$ be a map of commutative $\cJ$-space
  monoids. Then there is a chain of stable equivalences of
  symmetric ring spectra that relates $\bS^{\cJ}[M] \sm H\bZ$
  and $\bigvee_{n \in 2\bZ} (M_{h\cJ_{n}})_+ \sm H\bZ P_{\{n\}}$.  The chain
  of maps is natural with respects to maps of commutative $\cJ$-space
  monoids over $\C{\bld{d_1},\bld{d_2}}^{\gp}$.\qed
\end{proposition}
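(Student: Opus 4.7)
The plan is to identify $\bS^{\cJ}[M]$ with a Thom spectrum and then invoke an oriented $H\bZ$-Thom isomorphism in graded form. The starting point, established separately in~\cite{Sagave-S_graded-Thom}, is that for a commutative $\cJ$-space monoid $N$, the ring spectrum $\bS^{\cJ}[N]$ is naturally equivalent to the Thom spectrum of the virtual vector bundle over $N_{h\cJ}$ classified by the composite $N_{h\cJ} \to B\cJ \xrightarrow{\sim} QS^0 \to \bZ \times BO$. I would take this identification as given and use the $\cJ$-degree decomposition $N_{h\cJ} = \coprod_n N_{h\cJ_n}$, under which the component in $\cJ$-degree $n$ classifies a virtual bundle of rank $n$.

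Next I would invoke the strong orientation that is available under the evenness hypothesis. As recalled in the paragraph preceding the statement, when $d_2 - d_1$ is even there exists a map of commutative symmetric ring spectra $\bS^{\cJ}[\C{\bld{d_1},\bld{d_2}}^{\gp}] \to H\bZ P$. Composing with the structure map $M \to \C{\bld{d_1},\bld{d_2}}^{\gp}$ yields a ring-spectrum orientation $o \colon \bS^{\cJ}[M] \to H\bZ P$ of the Thom spectrum $\bS^{\cJ}[M]$. Combining $o$ with the Thom diagonal $\bS^{\cJ}[M] \to \bS^{\cJ}[M] \sm (M_{h\cJ})_+$ (available because $\bS^{\cJ}[M]$ is a Thom spectrum over $M_{h\cJ}$) produces a natural comparison map from $\bS^{\cJ}[M] \sm H\bZ$ to $(M_{h\cJ})_+ \sm H\bZ P$. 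Using the wedge decomposition of $H\bZ P$ and of $M_{h\cJ}$ by $\cJ$-degree, the codomain identifies with $\bigvee_{n \in 2\bZ} (M_{h\cJ_n})_+ \sm H\bZ P_{\{n\}}$, which is the target of the claimed chain.

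The main obstacle is checking that the resulting comparison map is an equivalence, and that it can be realized as a chain of \emph{ring-spectrum} maps. For the equivalence, I would argue componentwise: for each fixed $n$, the restriction of the orientation $o$ to the rank-$n$ summand is precisely a classical oriented Thom class for the virtual rank-$n$ bundle over $M_{h\cJ_n}$, so the ordinary Thom isomorphism identifies the corresponding summand of $\bS^{\cJ}[M] \sm H\bZ$ with $(M_{h\cJ_n})_+ \sm \Sigma^n H\bZ = (M_{h\cJ_n})_+ \sm H\bZ P_{\{n\}}$. The multiplicative comparison is more delicate: since the Thom diagonal and the orientation are both compatible with the $E_\infty$ structures, the composite is a map of ring spectra once one works in a sufficiently rigid model, but one has to be careful to use the $E_\infty$ monoidal structure on the target $\bigvee_n (M_{h\cJ_n})_+ \sm H\bZ P_{\{n\}}$ inherited from the multiplication on $M_{h\cJ}$ and on $H\bZ P$. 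Naturality in maps over $\C{\bld{d_1},\bld{d_2}}^{\gp}$ is then automatic, because the Thom spectrum construction, the Thom diagonal, and the orientation (via pullback along $M \to \C{\bld{d_1},\bld{d_2}}^{\gp}$) are all functorial in the structure map.
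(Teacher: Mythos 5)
The paper does not prove this statement: it is imported wholesale from \cite[Proposition~8.3]{Sagave-S_graded-Thom} and ends with an immediate \qed. So there is no proof in the paper to compare against; the proposition is a black box here, whose role in this paper is only to supply Proposition~\ref{prop:thom-iso-homology} and the subsequent homology calculations. That said, your sketch is indeed the standard route to such a graded Thom isomorphism, and it is plausibly close to what is done in the cited source.

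There is, however, a concrete gap in the sketch as written. After composing the Thom diagonal with the orientation and smashing with $H\bZ$, you arrive at a map with codomain $(M_{h\cJ})_+ \sm H\bZ P$, and you then assert that this codomain \emph{identifies} with $\bigvee_{n\in 2\bZ}(M_{h\cJ_n})_+ \sm H\bZ P_{\{n\}}$. It does not: $(M_{h\cJ})_+ \sm H\bZ P$ decomposes as the double wedge $\bigvee_{n,m}(M_{h\cJ_n})_+ \sm H\bZ P_{\{m\}}$, and the asserted target is only the diagonal $m=n$ summand. What you actually need to argue is that the comparison map \emph{factors through} that diagonal summand, and this is where the grading compatibility of the orientation enters: the orientation $o$ restricted to the $\cJ$-degree~$n$ summand $\bS^{\cJ}[M]_n$ must land in $H\bZ P_{\{n\}}$, not in some other periodic piece. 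This is not automatic from $o$ being merely a map of commutative symmetric ring spectra; it is a graded refinement that uses both the ring structure and the way the orientation is constructed from the evenness hypothesis. You should either prove this compatibility or at least isolate it as the key lemma. Once that is in place, the componentwise equivalence check and the ring-level comparison you gesture at are reasonable, though they require working with the $E_\infty$ structure on $\bigvee_n (M_{h\cJ_n})_+ \sm H\bZ P_{\{n\}}$ carefully, as you note.
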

In the proposition we do not need to assume that $M$ is cofibrant
since the existence of an augmentation to the cofibrant object $\C{\bld{d_1},\bld{d_2}}^{\gp}$
ensures that $\bS^{\cJ}[M]$ captures the correct homotopy type (see~\cite[Section 8]{RSS_LogTHH-I}).

When working with the isomorphism of homology algebras resulting from
Proposition~\ref{prop:graded-thom}, it will be convenient to view the
homology of $M_{h\cJ}$ as a $\bZ$-graded algebra in a way that takes
the $\cJ$-grading into account. We use a $\circledast$ as a subscript to denote
this new grading and set
\begin{equation}\label{eq:star-grading}
H_\circledast(M_{h\cJ};\bZ) = \bigoplus_{n\in\bZ} \Sigma^n H_*(M_{h\cJ_{n}};\bZ),
\end{equation} 
and similarly for other coefficient rings. In this notation,
Proposition~\ref{prop:graded-thom} implies the following statement.
\begin{proposition}\label{prop:thom-iso-homology}
  Let $(\bld{d_1},\bld{d_2})$ be an object of $\cJ$ of even
  $\cJ$-degree $d_2 - d_1$, and let~$M$ be a commutative $\cJ$-space
  monoid over $\C{\bld{d_1},\bld{d_2}}^{\gp}$.  Then
  $H_*(\bS^\cJ[M];\bZ)$ and $H_\circledast(M_{h\cJ};\bZ)$ are
  naturally isomorphic as $\bZ$-graded algebras.\qed
\end{proposition}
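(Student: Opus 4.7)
The plan is to obtain the stated isomorphism of $\bZ$-graded algebras by applying homotopy groups to the chain of stable equivalences of symmetric ring spectra furnished by Proposition~\ref{prop:graded-thom}. Since by definition $H_*(\bS^\cJ[M];\bZ) = \pi_*(\bS^\cJ[M]\sm H\bZ)$ with its smash-product ring structure, it will suffice to identify the homotopy ring of the right hand side $\bigvee_{n\in 2\bZ} (M_{h\cJ_n})_+ \sm H\bZ P_{\{n\}}$ with $H_\circledast(M_{h\cJ};\bZ)$.

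For the additive identification I would use that $H\bZ P_{\{n\}} = \Sigma^n H\bZ$, so each wedge summand has homotopy groups $\Sigma^n H_*(M_{h\cJ_n};\bZ)$, and the total homotopy is $\bigoplus_{n\in 2\bZ}\Sigma^n H_*(M_{h\cJ_n};\bZ)$. The assumption that $M$ lives over $\C{\bld{d_1},\bld{d_2}}^{\gp}$ forces $M_{h\cJ_n}$ to be empty whenever $n$ is not a multiple of the even number $d_2-d_1$, so in particular for odd $n$. Hence this direct sum equals $\bigoplus_{n\in\bZ}\Sigma^n H_*(M_{h\cJ_n};\bZ) = H_\circledast(M_{h\cJ};\bZ)$, matching the definition in~\eqref{eq:star-grading}.

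For the multiplicative identification I would trace the ring structure on the Thom-type spectrum recalled just before Proposition~\ref{prop:graded-thom}: it is assembled from the multiplication $H\bZ P_{\{n\}}\sm H\bZ P_{\{m\}} \to H\bZ P_{\{n+m\}}$ of the even periodic model together with the monoid maps $(M_{h\cJ_n})_+\sm(M_{h\cJ_m})_+\to (M_{h\cJ_{n+m}})_+$ induced by the commutative $\cJ$-space monoid structure on $M$. On $\pi_*$ these give precisely the pairing $\Sigma^n H_*(M_{h\cJ_n};\bZ)\otimes \Sigma^m H_*(M_{h\cJ_m};\bZ)\to \Sigma^{n+m}H_*(M_{h\cJ_{n+m}};\bZ)$ that is the product in the $\bZ$-graded ring $H_\circledast(M_{h\cJ};\bZ)$, so the shifts by $\Sigma^n$ that encode the new grading are exactly matched by the degree shifts in the $H\bZ P$-summands. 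Naturality in maps over $\C{\bld{d_1},\bld{d_2}}^{\gp}$ is inherited from the naturality clause in Proposition~\ref{prop:graded-thom}.

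I do not anticipate a genuine obstacle since the substantive work is already contained in Proposition~\ref{prop:graded-thom}; the only point that calls for care is bookkeeping, namely making sure that the grading convention in~\eqref{eq:star-grading}, where the degree-$n$ piece is $\Sigma^n H_*(M_{h\cJ_n};\bZ)$, is aligned with the $H\bZ P$-decomposition via $H\bZ P_{\{n\}}=\Sigma^n H\bZ$ on both additive and multiplicative levels.
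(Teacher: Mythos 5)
Your proposal is correct and matches the paper's intent exactly: the paper treats Proposition~\ref{prop:thom-iso-homology} as an immediate corollary of Proposition~\ref{prop:graded-thom} (hence the $\qed$ attached directly to the statement), and your proof is precisely the bookkeeping that makes that implication explicit — passing to homotopy groups across the chain of stable equivalences, observing that $M_{h\cJ_n}$ is empty unless $n$ is a multiple of the even $\cJ$-degree $d_2-d_1$, and checking that the ring structure on the Thom-type wedge (built from the $H\bZ P$-multiplication and the monoid structure of $M_{h\cJ}$, as recalled just before Proposition~\ref{prop:graded-thom}) corresponds on $\pi_*$ to the product on $H_\circledast(M_{h\cJ};\bZ)$.
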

If $x\colon S^{d_2} \to e_{d_1}$ has even degree $d = d_2-d_1$, then
the two previous propositions apply for example to the commutative
$\cJ$-space monoids $D(x)$, $D(x)^{\gp}$, $B^{\cy}(D(x))$,
$B^{\rep}(D(x))$, $B^{\cy}(D(x)_{\{0\}})$, and
$B^{\cy}_{\{0\}}(D(x)^{\gp})$. Here  $B^{\cy}(D(x)_{\{0\}})$ and
$B^{\cy}_{\{0\}}(D(x)^{\gp})$ denote the $\cJ$-degree zero parts; see Notation~\ref{not:J-degree-parts}. In view of later applications, we
formulate the following results for homology with
$\bF_p$-coefficients.
\begin{corollary} Let $x$ have even degree $d$. There are algebra isomorphisms 
\begin{align*}
H_*(\bS^{\cJ}[D(x)];\bF_p) \iso &\, H_\circledast( D(x)_{h\cJ};\bF_p) \cong P(x) \otimes H_*(D(x)_{h\cJ_{0}} ;\bF_p)\\
H_*(\bS^{\cJ}[D(x)^{\gp}];\bF_p) \iso &\, H_\circledast( D(x)^{\gp}_{h\cJ};\bF_p) \cong P(x^{\pm1}) \otimes H_*(D(x)_{h\cJ_{0}};\bF_p)
\end{align*}
 with
$H_*(D(x)_{h\cJ_{0}};\bF_p) \cong H_*(Q_0S^0;\bF_p)$ in $\cJ$-degree~$0$.
\end{corollary}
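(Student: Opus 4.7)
The plan is to apply Proposition~\ref{prop:thom-iso-homology} to $M = D(x)$ and to $M = D(x)^{\gp}$, and then to read off the multiplicative structure from the explicit homotopy type supplied by Lemma~\ref{lem:hty-type-of-Dx}. The assumption that $d$ is even is exactly the hypothesis on $\cJ$-degree in the proposition, and the required map to $\C{\bld{d_1},\bld{d_2}}^{\gp}$ is built into Construction~\ref{constr:D-x}: it already produces $D(x) \to \C{x}^{\gp}$, and group completing yields $D(x)^{\gp} \to \C{x}^{\gp}$, which is in fact a $\cJ$-equivalence by Lemma~\ref{lem:hty-type-of-Dx}. This immediately gives the two left-hand isomorphisms of the corollary.

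For the explicit description on the right, Lemma~\ref{lem:hty-type-of-Dx} identifies $D(x)_{h\cJ} \simeq Q_{\ge 0}S^0$ and hence $D(x)^{\gp}_{h\cJ} \simeq QS^0$. The paragraph following that lemma shows that the projection $D(x)_{h\cJ} \to B\cJ \simeq QS^0$ is multiplication by $d$ on $\pi_0$, so the $\cJ$-degree~$n$ part $D(x)_{h\cJ_n}$ is empty unless $n = kd$ with $k \ge 0$, in which case it is weakly equivalent to $Q_0S^0$; the analogous statement holds for $D(x)^{\gp}$ with $k \in \bZ$. Combining these observations with the convention~\eqref{eq:star-grading} defining the $\circledast$-grading identifies the underlying $\bF_p$-modules as $\bigoplus_{k \ge 0} \Sigma^{kd} H_*(Q_0S^0;\bF_p)$ and $\bigoplus_{k \in \bZ} \Sigma^{kd} H_*(Q_0S^0;\bF_p)$, respectively, and in particular yields $H_*(D(x)_{h\cJ_0};\bF_p) \cong H_*(Q_0S^0;\bF_p)$ in $\cJ$-degree~$0$.

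To upgrade this to an algebra isomorphism, I would consider the class $x \in H_d(\bS^{\cJ}[D(x)];\bF_p)$ arising from the image of the free generator of $\C{x}$ under $\C{x} \to D(x)$; under the Thom isomorphism it corresponds to a generator of $\Sigma^d H_0(D(x)_{h\cJ_d};\bF_p) \cong \bF_p$. By the multiplicative naturality of Proposition~\ref{prop:graded-thom} applied to the map of commutative $\cJ$-space monoids $\C{x} \to D(x)$, multiplication by~$x$ in $H_\circledast(D(x)_{h\cJ};\bF_p)$ agrees with the action of the component of $x$ on the $H$-space $D(x)_{h\cJ}$. This action translates path components and is a weak homotopy equivalence $D(x)_{h\cJ_{kd}} \to D(x)_{h\cJ_{(k+1)d}}$ for every $k$, so $x^k$ represents a generator of $\Sigma^{kd} H_0(D(x)_{h\cJ_{kd}};\bF_p)$. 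This yields the presentation $P(x) \otimes H_*(D(x)_{h\cJ_0};\bF_p)$; the same argument, now with $k$ ranging over $\bZ$, gives the Laurent version $P(x^{\pm 1}) \otimes H_*(D(x)_{h\cJ_0};\bF_p)$ for $D(x)^{\gp}$.

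The only substantive point is the final multiplicative step, which reduces to the fact that Proposition~\ref{prop:graded-thom} is natural for maps of commutative $\cJ$-space monoids over $\C{\bld{d_1},\bld{d_2}}^{\gp}$ and that the image of the free generator of $\C{x}$ acts on $H_\circledast$ by shifting components. Both are built into the graded Thom isomorphism as formulated in Section~\ref{sec:graded-Thom}, so the remaining verification is essentially bookkeeping.
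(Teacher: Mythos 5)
Your proposal is correct and takes essentially the same route as the paper: apply the graded Thom isomorphism (Propositions~\ref{prop:graded-thom} and \ref{prop:thom-iso-homology}) and identify the $\cJ$-degree decomposition via Lemma~\ref{lem:hty-type-of-Dx}. The only difference is in the final multiplicative step, where the paper invokes the general fact that a grouplike homotopy commutative simplicial monoid $A$ splits as $\pi_0(A) \times A_0$ up to $H$-space equivalence, while you unwind this into an explicit translation-by-$x$ argument; these are the same underlying idea, so there is no substantive divergence.
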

\begin{proof}
  The first isomorphisms follow from the above observations. For a
  general simplicial monoid $A$ that is grouplike and homotopy
  commutative, $A$ and $\pi_0(A)\times A_0$ are weakly equivalent as
  $H$-spaces. Here $A_0$ denotes the connected component of the unit
  element in~$A$. This provides the second isomorphisms. The last
  statement follows from Lemma~\ref{lem:hty-type-of-Dx}.
\end{proof}

\subsection{Homology of the cyclic and replete bar constructions}
To describe the homology of $\bS^{\cJ}[B^{\cy}(D(x))] $ and
$\bS^{\cJ}[B^{\rep}(D(x))] $, we write
\begin{equation}\label{eq:def-C-HBcyDx0}
C_* = H_* (B^{\cy}(D(x)_{\{0\}})_{h\cJ};\bF_p) 
\end{equation}
for the homology algebra of the $E_{\infty}$ space
$B^{\cy}(D(x)_{\{0\}})_{h\cJ}$. If $k$ is a positive integer, we say
that a $\bZ$-graded $\bF_p$-algebra $A$ is $k$-connected if $\bF_p\cong A_0$ and $A_i = 0$ if $i<0$ or $0<i\leq k$. In view of
Corollary~\ref{cor:BcyM-hJ-vs-Bcy-of-MhJ}, the underlying $\bZ$-graded
$\bF_p$-vector space of $C_*$ can be identified with
$H_*(B^{\cy}(D(x)_{h\cJ_{0}}); \bF_p)$ when $x$ has non-zero
degree. Since we have not defined a multiplicative structure on
$B^{\cy}(D(x)_{h\cJ_{0}})$, we cannot view this vector space
isomorphism as an algebra isomorphism. Nonetheless, the isomorphism
implies that $C_*$ is $(2p-4)$-connected, since
$B^{\cy}(D(x)_{h\cJ_{0}})$ is weakly equivalent to $D(x)_{h\cJ_{0}}
\times B(D(x)_{h\cJ_{0}})$ by the argument given in the proof of
Lemma~\ref{lem:Bcy-bar-M-decomposition}, and
$H_*(D(x)_{h\cJ_{0}};\bF_p)$ is $(2p-4)$-connected since
$D(x)_{h\cJ_{0}} \simeq Q_0S^0$ by Lemma~\ref{lem:hty-type-of-Dx}.
\begin{proposition}\label{prop:homology-B^cyDx}
  Let $p \geq 3$ be an odd prime and assume that $x$ has positive even degree. There are algebra
  isomorphisms
\begin{align*}
  H_*(\bS^{\cJ}[B^{\cy}(D(x))];\bF_p) \iso & \, H_\circledast(B^{\cy}(D(x))_{h\cJ};\bF_p) \cong P(x) \otimes E(dx) \otimes C_* \\
  H_*(\bS^{\cJ}[B^{\rep}(D(x))];\bF_p) \iso & \, H_\circledast(B^{\rep}(D(x))_{h\cJ} ;\bF_p)\cong P(x) \otimes E(d\log x) \otimes C_* \\
  H_*(\bS^{\cJ}[B^{\cy}_{\{0\}}(D(x)^{\gp})];\bF_p) \iso & \, H_\circledast(B^{\cy}(D(x)^{\gp})_{h\cJ_{0}};\bF_p)\cong E(d\log x) \otimes C_*
\end{align*}
with $|dx|= |x| + 1$, $|d\log x|= 1$, and $dx$ mapping to $x
\cdot d\log x$ under the repletion map. The suspension operator
satisfies $\sigma(x) = dx$, $\sigma(dx) = 0$ in the first case, and $\sigma(x) = x
\cdot d\log x$, $\sigma(d\log x) = 0$ in the second case.
\end{proposition}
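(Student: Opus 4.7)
My plan is to combine the graded Thom isomorphism of Proposition~\ref{prop:thom-iso-homology} with the space-level decompositions of $B^{\cy}$ and $B^{\rep}$ developed in Sections~\ref{sec:cyclic-bar} and~\ref{sec:rep-bar}. The first isomorphism in each of the three lines follows directly from Proposition~\ref{prop:thom-iso-homology}, since the commutative $\cJ$-space monoids $B^{\cy}(D(x))$, $B^{\rep}(D(x))$, and $B^{\cy}_{\{0\}}(D(x)^{\gp})$ each admit an augmentation to $\C{x}^{\gp}$, whose $\cJ$-degrees are multiples of the even integer $d = |x|$.

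This reduces the problem to computing $H_\circledast$ of the corresponding homotopy colimits. Corollary~\ref{cor:BcyM-hJ-vs-Bcy-of-MhJ} gives $B^{\cy}(D(x))_{h\cJ} \simeq B^{\cy}(D(x)_{h\cJ})$; Proposition~\ref{prop:BrepM-simeq-MtimesBMhJ} gives $B^{\rep}(D(x))_{h\cJ} \simeq D(x)_{h\cJ} \times B(D(x)_{h\cJ})$; and Propositions~\ref{prop:grouplike-M-VM-equivalence} and~\ref{prop:grouplike-V(M)-equivalence} applied to the grouplike $D(x)^{\gp}$ combine to yield $B^{\cy}(D(x)^{\gp})_{h\cJ_0} \simeq D(x)^{\gp}_{h\cJ_0} \times B(D(x)^{\gp}_{h\cJ})$, using that $B(D(x)^{\gp}_{h\cJ})$ is concentrated in $\cJ$-degree zero. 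Lemma~\ref{lem:hty-type-of-Dx} identifies $D(x)_{h\cJ} \simeq Q_{\geq 0}S^0$ and $D(x)^{\gp}_{h\cJ} \simeq QS^0$, with the $\cJ$-grading indexed by multiples of $d$. Using the $H$-space splittings $QS^0 \simeq \bZ \times Q_0 S^0$ and $Q_{\geq 0} S^0 \simeq \bN_0 \times Q_0 S^0$, I obtain $B(D(x)^{\gp}_{h\cJ}) \simeq S^1 \times BQ_0 S^0$ (contributing the exterior factor $E(d\log x)$ with $|d\log x| = 1$) and $B^{\cy}(Q_{\geq 0} S^0) \simeq B^{\cy}(\bN_0) \times B^{\cy}(Q_0 S^0)$. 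The Hochschild--Kostant--Rosenberg theorem applied to the smooth polynomial ring $\bF_p[\bN_0] = \bF_p[x]$ yields $H_*(B^{\cy}(\bN_0); \bF_p) \cong P(x) \otimes E(dx)$ with $|dx| = d + 1$, while $B^{\cy}(Q_0 S^0)$ contributes $C_*$. Assembling these identifications gives the three algebra isomorphisms.

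Finally, the repletion map $\rho$ acts on the $\bN_0$-factor via the evident comparison $B^{\cy}(\bN_0) \to B^{\cy}(\bZ) \times_{\bZ} \bN_0 \simeq \bN_0 \times S^1$, which sends the class $dx$ to the generator of $H_1$ in the $\cJ$-degree $d$ component $\{1\} \times S^1$, identified as $x \cdot d\log x$ in the algebra. The suspension operator $\sigma$ comes from the circle action on $B^{\cy}$; under the HKR identification this is the Connes operator, which acts as the de Rham differential $\sigma(x) = dx$ with $\sigma(dx) = 0$, and transporting through $\rho$ produces $\sigma(x) = x \cdot d\log x$ and $\sigma(d\log x) = 0$ in the replete case. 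The main obstacle will be carefully tracking the $\cJ$-grading through the various splittings, and in particular verifying that $B(Q_{\geq 0} S^0) \simeq B(QS^0)$, which stems from $\bN_0 \to \bZ$ being a group completion and is ultimately tied to the repetitivity of $D(x)$.
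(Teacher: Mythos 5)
Your first step is exactly right: the graded Thom isomorphism of Proposition~\ref{prop:thom-iso-homology} handles the first isomorphism in each line, since each of $B^{\cy}(D(x))$, $B^{\rep}(D(x))$ and $B^{\cy}_{\{0\}}(D(x)^{\gp})$ augments to $\C{x}^{\gp}$. The additive picture you build from the decompositions of Sections~\ref{sec:cyclic-bar} and~\ref{sec:rep-bar}, together with $B^{\cy}(\bN_0)$ and $B^{\rep}(\bN_0)$, is also correct. However, there is a genuine gap: you need \emph{algebra} isomorphisms, and all of your decompositions are space-level, not algebra-level. The algebra structure on $H_\circledast(B^{\cy}(D(x))_{h\cJ};\bF_p)$ comes from the commutative $\cJ$-space monoid structure of $B^{\cy}(D(x))$ (via the $\boxtimes$-product), whereas the zig-zag of Corollary~\ref{cor:BcyM-hJ-vs-Bcy-of-MhJ} identifying $B^{\cy}(D(x))_{h\cJ}$ with $B^{\cy}(D(x)_{h\cJ})$ produces a space with no natural ring structure ($D(x)_{h\cJ}$ is only an associative simplicial monoid, so $B^{\cy}(D(x)_{h\cJ})$ is merely a cyclic space). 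The paper says this explicitly just before the proposition: \emph{``Since we have not defined a multiplicative structure on $B^{\cy}(D(x)_{h\cJ_0})$, we cannot view this vector space isomorphism as an algebra isomorphism.''} Your product splitting $B^{\cy}(Q_{\geq 0}S^0) \simeq B^{\cy}(\bN_0) \times B^{\cy}(Q_0 S^0)$ has the same issue: the H-space splitting $Q_{\geq 0}S^0 \simeq \bN_0 \times Q_0 S^0$ is only guaranteed for grouplike H-spaces, and even if it held it would not obviously produce the algebra isomorphism in the $\boxtimes$-based sense that the statement needs.

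The paper's actual argument supplies the missing multiplicativity. It fibrantly replaces the augmentation $B^{\cy}(D(x)^{\gp})_{h\cJ}\to B^{\cy}(d\bZ)$ (Lemma~\ref{lem:section-to-BDxgp-BdZ}), finds a basepoint-preserving \emph{space-level} section $\tau$, base-changes to $B^{\cy}(d\bN_0)$, and constructs a weak equivalence $\mu(a,b)=\iota(a)\cdot\tau(b)$ using the $E_\infty$-monoid product. The resulting additive identification $C_*\otimes H_*(B^{\cy}(d\bN_0);\bF_p)\cong H_*(B^{\cy}(D(x))_{h\cJ};\bF_p)$ is then shown to be multiplicative by an honest \emph{connectivity} argument: $C_*$ is $(2p-4)$-connected, hence at least $2$-connected for $p\ge3$, so the projection $\pi_*$ is an isomorphism in degrees $\le2$, and because $H_*(B^{\cy}(d\bN_0);\bF_p)$ is concentrated in degrees $\le1$ this forces the additive section $\tau_*$ to be an algebra map. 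This is precisely where the hypothesis $p\ge3$ enters (and the paper notes it does not know whether the statement holds for $p=2$), and it is the step your proposal does not address. The suspension-operator statements likewise come out of tracking $\sigma$ through the same connectivity estimate against the known formulas in $H_*(B^{\cy}(d\bN_0);\bF_p)$, not directly from an HKR-style identification.
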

We need some preparation to prove the proposition. First we recall
from~\cite[Section~7]{RSS_LogTHH-I} that for a commutative $\cJ$-space
monoid $M$ concentrated in $\cJ$-degrees divisible by $d$, there is a
natural augmentation map $B^{\cy}(M)_{h\cJ} \to B^{\cy}(d\bZ)$ which
is defined as the realization of the map
\[
B^{\cy}_s(M)_{h\cJ} = ( \textstyle\coprod_{(d_0,\dots, d_s)\in B^{\cy}_s(d\bZ)} M_{\{d_0\}} \boxtimes \dots \boxtimes M_{\{d_s\}})_{h\cJ} \to B^{\cy}_s (d\bZ)  
\]
that collapses the summand indexed by $(d_0,\dots, d_s)$ to the point
$(d_0,\dots, d_s)$. 

The category of simplicial monoids has a model structure in which a map is a fibration 
or weak equivalence if and only if the underlying map of simplicial sets is. 
Specializing to the case $M=D(x)^{\gp}$, we choose a factorization 
\[\xymatrix{B^{\cy}(D(x)^{\gp})_{h\cJ} \ar@{ >->}[r]^{\sim} & B^{\cy}(D(x)^{\gp})^{\mathrm{fib}}_{h\cJ} \ar@{->>}[r]^-{q}& B^{\cy}(d\bZ)}\]
of the augmentation in this model structure. 
\begin{lemma}\label{lem:section-to-BDxgp-BdZ}
There is a basepoint preserving space level section to $q$.  
\end{lemma}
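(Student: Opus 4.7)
The plan is to build the section on the cyclic level using canonical integer powers of the generator $x$ of $D(x)^{\gp}$.

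In a first step, let $\xi \in D(x)^{\gp}_{\{d\}, h\cJ}$ denote the vertex obtained from the $0$-simplex $x \in D(x)(\bld{d_1}, \bld{d_2})$. Its integer powers $\xi_n = \xi^n$ for $n \in \bZ$ yield vertices of $D(x)^{\gp}_{\{nd\}, h\cJ}$ satisfying $\xi_0 = 1$ and $\xi_n \cdot \xi_m = \xi_{n+m}$. To make these relations strict for all integers (rather than only for $n, m \ge 0$), I first replace $D(x)^{\gp}_{h\cJ}$ by a $\cJ$-equivalent simplicial-group model; this is legitimate since $D(x)^{\gp}$ is grouplike and since the existence of a section of $q$ depends only on the homotopy type of the fibrant replacement of the augmentation. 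Being powers of a single element, the $\xi_n$ then also pairwise commute.

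Next, these $\xi_n$ assemble into a map of cyclic sets
\[
s_\bullet \colon B^{\cy}_\bullet(d\bZ) \to B^{\cy}_\bullet(D(x)^{\gp})_{h\cJ},\qquad (n_0 d, \dots, n_k d) \mapsto [\xi_{n_0} \boxtimes \cdots \boxtimes \xi_{n_k}],
\]
landing in the summand of $B^{\cy}_k(D(x)^{\gp})_{h\cJ}$ indexed by $(n_0 d, \dots, n_k d)$. The inner face maps are handled by $\xi_n \cdot \xi_m = \xi_{n+m}$, the cyclic face $d_k$ by the same identity after the cyclic shift (using mutual commutativity of the $\xi_n$), and degeneracies by $\xi_0 = 1$; continuity is automatic because $B^{\cy}_k(d\bZ)$ is discrete in each simplicial degree. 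By construction, $|s_\bullet|$ sends the basepoint $(0, \dots, 0)$ to $(1, \dots, 1)$ and composes with the augmentation to the identity on $B^{\cy}(d\bZ)$.

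Finally, composing $|s_\bullet|$ with the acyclic cofibration $B^{\cy}(D(x)^{\gp})_{h\cJ} \xrightarrow{\sim} B^{\cy}(D(x)^{\gp})^{\mathrm{fib}}_{h\cJ}$ yields the desired basepoint-preserving space-level section of $q$, since the composite of this map with $q$ is precisely the augmentation. The main obstacle is arranging the strict multiplicative relations $\xi_n \xi_m = \xi_{n+m}$ for all integers $n, m$ in a working model of $D(x)^{\gp}_{h\cJ}$; this is the reason for the preliminary passage to a simplicial-group replacement, after which the cyclic construction and the lifting through the acyclic cofibration are formal.
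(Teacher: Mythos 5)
Your underlying idea---construct a strict cyclic section after replacing $D(x)^{\gp}_{h\cJ}$ by a simplicial group, exploiting the fact that integer powers of a single element commute and multiply strictly---is a reasonable one, but the proof as written has a gap in the transport step that it does not close. You build $s_\bullet$ on a replaced model, a $\cJ$-equivalent simplicial group, say $G$. The resulting section lands in $B^{\cy}(G)$, which is related to $B^{\cy}(D(x)^{\gp})_{h\cJ}$ only through a zigzag of weak equivalences (via Corollary~\ref{cor:BcyM-hJ-vs-Bcy-of-MhJ}, one leg points the wrong way). So your final sentence, composing $|s_\bullet|$ with the acyclic cofibration $B^{\cy}(D(x)^{\gp})_{h\cJ}\xrightarrow{\sim}B^{\cy}(D(x)^{\gp})^{\mathrm{fib}}_{h\cJ}$, does not type-check: $|s_\bullet|$ does not land in $B^{\cy}(D(x)^{\gp})_{h\cJ}$. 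Moreover, $B^{\cy}(D(x)^{\gp})_{h\cJ}$ is the homotopy colimit over $\cJ$ of the level-wise $\boxtimes$-powers, not the cyclic bar construction of the simplicial monoid $D(x)^{\gp}_{h\cJ}$; conflating the two elides precisely the identifications that need to be tracked.

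The missing step is the one the paper actually makes explicit: transporting along the zigzag gives at best a section of $q$ in the homotopy category $\mathrm{Ho}(\cS_*)$; one must then invoke that $q$ is a fibration between cofibrant and fibrant based simplicial sets, so the homotopy lifting property upgrades the section-up-to-homotopy to an honest space-level section. You assert that ``the existence of a section of $q$ depends only on the homotopy type of the fibrant replacement of the augmentation,'' which is the right invariance principle, but its justification is exactly this lifting argument, and it must be applied at the end instead of the direct composition. Note also that if one is willing to go via the homotopy category, the strict cyclic section is unnecessary work: the paper uses the decomposition $B^{\cy}(D(x)^{\gp})_{h\cJ}\simeq D(x)^{\gp}_{h\cJ}\times B(D(x)^{\gp}_{h\cJ})$ from Corollary~\ref{cor:BcyM-hJ-vs-Bcy-of-MhJ} and Lemma~\ref{lem:Bcy-bar-M-decomposition} together with $D(x)^{\gp}_{h\cJ}\simeq QS^0$, under which the augmentation to $d\bZ\times B(d\bZ)$ visibly admits a section in $\mathrm{Ho}(\cS_*)$ with no need to rigidify at the cyclic level.
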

\begin{proof}
  Let $d$ be the degree of $x$. Since $D(x)^{\gp}_{h\cJ} \simeq QS^0$,
  the canonical augmentation $D(x)^{\gp}_{h\cJ} \times
  B(D(x)^{\gp}_{h\cJ}) \to d\bZ \times B(d\bZ)$ admits a section in
  $\mathrm{Ho}(\cS_*)$, the homotopy category of based simplicial
  sets. The chain of weak equivalences between
  $B^{\cy}(D(x)^{\gp})_{h\cJ}$ and $D(x)^{\gp}_{h\cJ} \times
  B(D(x)^{\gp}_{h\cJ})$ resulting from
  Corollary~\ref{cor:BcyM-hJ-vs-Bcy-of-MhJ} and the proof of
  Lemma~\ref{lem:Bcy-bar-M-decomposition} is basepoint preserving and
  compatible with the augmentation to $B^{\cy}(d\bZ) \simeq d\bZ
  \times B(d\bZ)$. Hence the augmentation $B^{\cy}(D(x)^{\gp})_{h\cJ}
  \to B^{\cy}(d\bZ)$ also admits a section in $\mathrm{Ho}(\cS_*)$. 

  It follows that the map~$q$ is a fibration of cofibrant and fibrant
  based simplicial sets, which admits a section in the homotopy
  category.  By the homotopy lifting property it therefore admits a
  section in the category $\mathcal{S}_*$ of based simplicial sets.
\end{proof}

\begin{proof}[Proof of Proposition~\ref{prop:homology-B^cyDx}]
The graded Thom isomorphism  provides the
first isomorphism in each line of the statement. For the second, we recall
from~\cite{Rognes_TLS}*{Propositions~3.20 and 3.21}
or~\cite[Section~5.2]{RSS_LogTHH-I} that 
there are algebra isomorphisms
\begin{align*}
H_*(B^{\cy}(\bN_0);\bF_p)\cong & \,  H_*(*\sqcup \textstyle\coprod_{k\geq 1} S^1(k); \bF_p) \cong P(x) \otimes E(dx),\\
H_*(B^{\rep}(\bN_0);\bF_p) \cong & \,  H_*(\textstyle\coprod_{k\geq 0} S^1(k); \bF_p) \cong  P(x) \otimes E(d\log x)\, ,\text{ and}\\
H_*(B^{\cy}_{\{0\}}(\bZ);\bF_p) \cong  & \, H_*(S^1(0); \bF_p) \cong   E(d\log x). 
\end{align*}
Here each $S^1(k)$ is a $1$-sphere, and we have $x \in H_0(S^1(1);\bF_p)$, $dx \in
H_1(S^1(1);\bF_p)$, and $d\log x \in H_1(S^1(0);\bF_p)$. 

We first treat the case of $H_*(B^{\cy}(D(x))_{h\cJ};\bF_p)$. Writing $d$ for the degree of $x$, 
we observe that the augmentations induce a commutative diagram
\begin{equation}\label{eq:homology-B^cyDx-cart-squares}
\xymatrix@-1pc{
B^{\cy}(D(x)_{\{0\}})_{h\cJ} \ar[d] \ar[rr] && B^{\cy}(D(x))_{h\cJ}  \ar[d] \ar[rr] && B^{\cy}(D(x)^{\gp})_{h\cJ} \ar[d] \\
{*} \ar[rr] && B^{\cy}(d\bN_0) \ar[rr]&& B^{\cy}(d\bZ)\,.
}
\end{equation}
Using the weak equivalences from
Corollary~\ref{cor:BcyM-hJ-vs-Bcy-of-MhJ} and applying the
Bousfield--Fried\-lander theorem as in the
proof~\cite[Proposition~7.1]{RSS_LogTHH-I} shows
that that the outer rectangle and the right hand square in this
diagram are homotopy cartesian. Hence so is the left hand square. 

Let $\pi\colon
\xymatrix@1{B^{\cy}(D(x))^{\mathrm{fib}}_{h\cJ}\ar@{->>}[r]&
  B^{\cy}(d\bN_0)}$ be the fibration of simplicial monoids obtained by
forming the base change of the fibration $q$ considered in
Lemma~\ref{lem:section-to-BDxgp-BdZ} 
along $B^{\cy}(d\bN_0) \to B^{\cy}(d\bZ)$. Then the canonical map
$\iota \colon B^{\cy}(D(x))_{h\cJ}\to
B^{\cy}(D(x))^{\mathrm{fib}}_{h\cJ}$ is a weak equivalence since the
right hand square in~\eqref{eq:homology-B^cyDx-cart-squares} is
homotopy cartesian, and it follows from
Lemma~\ref{lem:section-to-BDxgp-BdZ} that $\pi$ admits a basepoint
preserving space level section~$\tau$.

For a $0$-simplex $c \in B^{\cy}(d\bN_0)$, we now consider the diagram 
\begin{equation}\label{eq:homology-B^cyDx-mult-we}\xymatrix@-1pc{
B^{\cy}(D(x)_{\{0\}})_{h\cJ} \ar[rr]^-{\mathrm{incl}_c} \ar@{=}[d] && B^{\cy}(D(x)_{\{0\}})_{h\cJ} \times B^{\cy} (d\bN_0)  \ar[rr]^-{\mathrm{proj}} \ar[d]^{\mu} && B^{\cy} (d\bN_0) \ar@{=}[d] \\
B^{\cy}(D(x)_{\{0\}})_{h\cJ} \ar[rr]^{\nu_c} &&  B^{\cy}(D(x))_{h\cJ}^{\mathrm{fib}} \ar[rr]^{\pi} && B^{\cy} (d\bN_0)\, .
}
\end{equation}
where $\mathrm{incl}_c(a) = (a,c)$, $\mu(a,b) = \iota(a)\cdot \tau(b)$, and
$\nu_c(a) = \iota(a)\cdot \tau(c)$. By
construction, both squares commute. We want to show that $\mu$ is a
weak equivalence.  Since the left hand square
in~\eqref{eq:homology-B^cyDx-cart-squares} is homotopy cartesian, the
lower sequence in~\eqref{eq:homology-B^cyDx-mult-we} is a homotopy
fiber sequence if $c$ is the basepoint. A five-lemma argument with the
long exact sequences and a consideration of path components shows that
$\mu$ induces an isomorphism on homotopy groups with basepoints in the
zero component. Since in the
diagram~\eqref{eq:homology-B^cyDx-cart-squares}, both squares are
homotopy cartesian, the rightmost map is a homomorphism of grouplike
simplicial monoids, and $B^{\cy}_{>0}(d\bN_0) \to B^{\cy}_{>0}(d\bZ)$ is
a weak equivalence, it
follows that the lower sequence in~\eqref{eq:homology-B^cyDx-mult-we}
is also a homotopy fiber sequence if $c$ lies in a positive path
component. Arguing again with the long exact sequence completes the
proof of $\mu$ being a weak equivalence.

With the notation $C_* = H_*(B^{\cy}(D(x)_{\{0\}})_{h\cJ};\bF_p)$ from~\eqref{eq:def-C-HBcyDx0}, the K{\"u}nneth isomorphism and the weak equivalence $\mu$ induce an isomorphism \[
C_* \tensor H_*(B^{\cy}(d\bN_0);\bF_p) \iso H_*(B^{\cy}(D(x))_{h\cJ};\bF_p).
\]
It is an isomorphism of $C_*$-modules under $C_*$ since $\mu$ is a map of $B^{\cy}(D(x)_{\{0\}})_{h\cJ}$-modules under $B^{\cy}(D(x)_{\{0\}})_{h\cJ}$.
We need to verify that this isomorphism
is multiplicative, and it suffices to check this on each tensor factor
on the left.  For $C_*$, this holds by construction.  The homomorphism
\[\tau_*\colon H_*(B^{\cy}(d\bN_0);\bF_p) \to
H_*(B^{\cy}(D(x))_{h\cJ};\bF_p)\] is induced by the space level section
$\tau$. Hence it is an additive section to the algebra homomorphism
$\pi_*\colon H_*(B^{\cy}(D(x))_{h\cJ};\bF_p) \to
H_*(B^{\cy}(d\bN_0);\bF_p)$. The algebra $C_*$ is $(2p-4)$-connected
and therefore at least $2$-connected for $p\geq 3$. Hence $\pi_*$ is
an isomorphism in degrees $* \leq 2$.  Since $
H_*(B^{\cy}(d\bN_0);\bF_p)$ is concentrated in degrees $* \leq 1$,
this implies, as an algebraic fact, that the additive section $\tau_*$
is multiplicative. It also implies that the suspension operator
satisfies $\sigma(x)=dx$ and $\sigma(dx)=0$, since these relations hold
in $ H_*(B^{\cy}(d\bN_0);\bF_p)$ and  are preserved by
$\pi_*$.

The claims about $H_*(B^{\rep}(D(x))_{h\cJ};\bF_p)$ and
$H_*(B^{\cy}_{\{0\}}(D(x)^{\gp})_{h\cJ};\bF_p)$ follow by a similar argument with
$B^{\cy}_{\geq 0}(d\bZ)$ (respectively
$B^{\cy}_{\{0\}}(d\bZ)$) replacing $B^{\cy}(d\bN_0)$.
\end{proof}
We do not know if the statement of
Proposition~\ref{prop:homology-B^cyDx} holds for $p=2$. Since we are
interested in $V(1)$-homotopy calculations later on, we shall not
pursue this question further.

\subsection{Homology of the group completion of free commutative
  \texorpdfstring{$\cJ$}{J}-space monoids}
We give another application of the graded Thom isomorphism that will
become relevant for the proof of
Theorem~\ref{thm:ell-ku-direct-image-logthh-etale} below.  Let $(\bld
e_1,\bld e_2)$ be an object with $e_2-e_1 > 0$ and let $(\bld d_1,\bld
d_2)=(\bld e_1,\bld e_2) ^{\sqcup k}$ for a positive integer~$k$. We
have a map of commutative $\cJ$-space monoids $\C{\bld d_1,\bld
  d_2}\to \C{\bld e_1,\bld e_2}$ defined by mapping the generator
$\id_{(\bld d_1,\bld d_2)}$ to $\id_{(\bld e_1,\bld e_2)}^{\boxtimes
  k}$.  Consider the commutative diagram of commutative $\cJ$-space
monoids
\[
\xymatrix@-1pc{ \C{\bld d_1,\bld d_2} \ar[r]\ar[d]&
  \C{\bld d_1,\bld d_2}^{\gp}_{\geq 0} \ar[r] \ar[d]
  & \C{ \bld d_1,\bld d_2}^{\gp} \ar[d]\\
  \C{ \bld e_1,\bld e_2}\ar[r]& \C{ \bld e_1,\bld
  e_2}^{\gp}_{\geq 0} \ar[r]& \C{ \bld e_1,\bld
  e_2}^{\gp} }
\]
in which $\C{ \bld d_1,\bld d_2}^{\gp}$ and $\C{
\bld e_1,\bld e_2}^{\gp}$ are cofibrant, the horizontal
composites are group completions and the right hand horizontal maps
are the canonical inclusions.

\begin{lemma}\label{lem:free-p-local-homology}
  With notation as above set $e=e_2-e_1$, assume that $e$ is even, and
  let $p$ be a prime not dividing $k$. Then $H_*(\bS^{\cJ}[\C{ \bld
    e_1,\bld e_2}^{\gp}_{\geq 0}];\bZ_{(p)})$ is a free module over
  $H_*(\bS^{\cJ}[\C{ \bld d_1,\bld d_2}^{\gp}_{\geq 0}];\bZ_{(p)})$
  generated by the images of the canonical generators of
  $\Sigma^{ie}\bZ_{(p)}$ under the homomorphisms
  \[
  \Sigma^{ie}\bZ_{(p)}\cong H_*(\bS^{\cJ}[F^{\cJ}_{(\bld
    e_1,\bld e_2)^{\sqcup i}}(*)];\bZ_{(p)}) \to
  H_*(\bS^{\cJ}[\C{ \bld e_1,\bld
  e_2}^{\gp}_{\geq 0}];\bZ_{(p)})
  \]
  for $i=0,\dots,k-1$.
\end{lemma}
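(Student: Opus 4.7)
The plan is to apply the graded Thom isomorphism (Proposition~\ref{prop:thom-iso-homology}) to both homology algebras and then verify the freeness statement in terms of the Pontryagin homology of $QS^0$. Since $e$ is even (so $d=ke$ is also even) and both $\C{\bld d_1,\bld d_2}^{\gp}_{\geq 0}$ and $\C{\bld e_1,\bld e_2}^{\gp}_{\geq 0}$ admit canonical maps to the cofibrant $\C{\bld e_1,\bld e_2}^{\gp}$ (the former via the composite with $f$), the proposition yields natural algebra isomorphisms $H_*(\bS^{\cJ}[-];\bZ_{(p)}) \cong H_\circledast((-)_{h\cJ};\bZ_{(p)})$ compatible with $f_*$. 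Writing $R$ for the source and $S$ for the target of the resulting map of $\circledast$-graded Pontryagin algebras, the lemma reduces to a claim about the $R$-module structure of $S$.

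By Barratt--Priddy--Quillen, both $\C{\bld d_1,\bld d_2}^{\gp}_{h\cJ}$ and $\C{\bld e_1,\bld e_2}^{\gp}_{h\cJ}$ are equivalent, as grouplike $E_\infty$ spaces, to $QS^0 = \Omega^\infty\bS$, with the $\geq 0$ parts corresponding to $Q_{\geq 0}S^0$. Since $f$ sends the generator of the source to the $k$-th $\boxtimes$-power of the generator of the target, on $\pi_0$ the induced map is multiplication by $k$, so $f_{h\cJ}$ is identified up to homotopy with $\Omega^\infty(k\cdot\id_\bS)$. Because $p\nmid k$, this is a $p$-local weak equivalence, and its restriction to the identity component yields a $p$-local isomorphism $\phi$ of $H_*(Q_0S^0;\bZ_{(p)})$ that preserves the Pontryagin product (since $k\cdot\id_\bS$ is additive). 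Translation by the $\pi_0$-generators then identifies $R \cong H_*(Q_0S^0;\bZ_{(p)})[u']$ and $S \cong H_*(Q_0S^0;\bZ_{(p)})[u]$ as $\circledast$-graded Pontryagin algebras, with $|u'|=d$ and $|u|=e$, and the ring map sending $u' \mapsto u^k$ while acting by $\phi$ on $H_*(Q_0S^0;\bZ_{(p)})$.

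By the Thom isomorphism applied to the contractible $F^{\cJ}_{(\bld e_1,\bld e_2)^{\sqcup i}}(*)_{h\cJ}$ (which sits in $\cJ$-degree $ie$ and maps to the basepoint of the $i$-th component of $Q_{\geq 0}S^0$), the image of the canonical generator of $\Sigma^{ie}\bZ_{(p)}$ in $S$ is the basepoint class $u^i \in H_0(Q_iS^0;\bZ_{(p)})$ in $\circledast$-degree $ie$. Since every non-negative integer $n$ has a unique expression $n = jk + i$ with $0 \leq i < k$, the $R$-submodules $R\cdot u^i$ for $i = 0,\ldots,k-1$ are supported in disjoint components of $Q_{\geq 0}S^0$ and together exhaust $S$; after translating back to $Q_0S^0$, freeness of each $R\cdot u^i$ over $R$ reduces to the statement that $\phi$ is an isomorphism. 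The main difficulty is the accurate identification of $f_{h\cJ}$ with $\Omega^\infty(k\cdot\id_\bS)$, which relies on the Barratt--Priddy equivalence and the universal property of free commutative $\cJ$-space monoids; once this is done the remaining Pontryagin-ring calculation is routine.
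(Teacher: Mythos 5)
Your proposal is correct and follows the same overall strategy as the paper: reduce via the graded Thom isomorphism (Proposition~\ref{prop:thom-iso-homology}) to Pontryagin homology of $Q_{\geq 0}S^0$, translate to $\cJ$-degree~$0$, establish that the induced map on $Q_0S^0$ is multiplication by~$k$ on homotopy, and conclude from $p\nmid k$. Your Pontryagin-ring bookkeeping with $R\cong H_*(Q_0S^0;\bZ_{(p)})[u']$ and $S\cong H_*(Q_0S^0;\bZ_{(p)})[u]$ and the disjoint-component observation is an explicit version of the paper's reformulation of freeness as the requirement that $\coprod_{i=0}^{k-1}F^{\cJ}_{(\bld e_1,\bld e_2)^{\sqcup i}}(*)\boxtimes\C{\bld d_1,\bld d_2}^{\gp}_{\geq 0}\to\C{\bld e_1,\bld e_2}^{\gp}_{\geq 0}$ be a $\bZ_{(p)}$-homology equivalence, combined with the translation diagram reducing to $\cJ$-degree~$0$. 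The one genuine divergence is how the multiplication-by-$k$ statement is established. You propose to identify $f_{h\cJ}$, via Barratt--Priddy--Quillen, with an $E_\infty$ self-map of $QS^0$, hence with a map of spectra $\bS\to\bS$ that is forced to be $k\cdot\id_\bS$ because $[\bS,\bS]=\bZ$ is determined by its effect on $\pi_0$; you correctly flag this identification as the step requiring care. The paper instead proves it as Lemma~\ref{lem:gp-compl-map-BJ-map}, by constructing an explicit $E_\infty$ equivalence $B\Sigma\simeq\C{\bld d_1,\bld d_2}_{h\cJ}$ and analyzing the concatenation functor $\bld k^{\sqcup}\colon\cJ\to\cJ$ directly on classifying spaces, staying entirely inside the $\cJ$-space formalism. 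Your shortcut is cleaner if one grants the equivalence between grouplike $E_\infty$ spaces and connective spectra and the compatibility of the BPQ identifications with the $E_\infty$ structures; the paper's Lemma~\ref{lem:gp-compl-map-BJ-map} is more self-contained and avoids invoking that equivalence. Both fill the same hole, so the proposal is sound modulo the step you yourself single out.
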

Here the last map is induced by the canonical map $F^{\cJ}_{(\bld e_1,\bld
  e_2)^{\sqcup i}}(*)\cong F^{\cJ}_{(\bld e_1,\bld e_2)}(*)^{\boxtimes i}\to \C{ \bld e_1,\bld e_2}$.
\begin{proof}
  Consider the map of $\cJ$-spaces
  \[
  F_{(\bld e_1,\bld e_2)^{\sqcup i}}^{\cJ}(*)\boxtimes \C{ \bld
  d_1,\bld d_2}^{\gp}_{\geq 0} \to \C{ \bld e_1,\bld
  e_2}^{\gp}_{\geq 0} \boxtimes \C{ \bld e_1,\bld e_2}^{\gp}_{\geq 0}
  \to \C{ \bld e_1,\bld e_2}^{\gp}_{\geq 0}
  \]
  for $i=0,\dots,k-1$. The statement in the theorem is equivalent to
  the induced map of symmetric spectra
  \[
  \bS^{\cJ}\left[ \textstyle\coprod_{i=0}^{k-1}F_{(\bld e_1,\bld e_2)^{\sqcup
      i}}^{\cJ}(*)\boxtimes \C{ \bld d_1,\bld
  d_2}^{\gp}_{\geq 0}\right]\to \bS^{\cJ}[\C{ \bld e_1,\bld
  e_2}^{\gp}_{\geq 0}]
  \]
  inducing an isomorphism on homology with
  $\bZ_{(p)}$-coefficients. By Proposition~\ref{prop:thom-iso-homology} this is equivalent to the map of
  spaces
  \[
  \textstyle\coprod_{i=0}^{k-1}\big(F_{(\bld e_1,\bld e_2)^{\sqcup
      i}}^{\cJ}(*)\boxtimes \C{ \bld d_1,\bld
  d_2}^{\gp}_{\geq 0}\big)_{h\cJ_{n}}\to \left(\C{
    \bld e_1,\bld e_2}^{\gp}_{\geq 0}\right)_{h\cJ_{n}}
  \]
  inducing an isomorphism on homology with $\bZ_{(p)}$-coefficients
  for all $n$.  We can further reduce this to the case $n=0$ by
  considering the commutative diagram
  \[
  \xymatrix@-1pc{ \big(F_{(\bld e_1,\bld e_2)^{\sqcup
        i}}^{\cJ}(*)\boxtimes \C{\bld d_1,\bld d_2}^{\gp}
    \big)_{h\cJ_{ie+jd}} \ar[r] &  \C{ \bld e_1,\bld e_2}^{\gp}_{h\cJ_{ie+jd}}\\
    F_{(\bld e_1,\bld e_2)^{\sqcup i}}^{\cJ}(*)_{h\cJ_{ie}} \times
    \C{\bld d_1,\bld d_2 }^{\gp}_{h\cJ_{jd}}\ar[r]
    \ar[u]^{\sim}& F_{(\bld e_1,\bld e_2)^{\sqcup
        i}}^{\cJ}(*)_{h\cJ_{ie}} \times \C{\bld e_1,\bld
    e_2
    }^{\gp}_{h\cJ_{jd}}\ar[u]^{\sim}\\
    F_{(\bld e_1,\bld e_2)^{\sqcup i}}^{\cJ}(*)_{h\cJ_{ie}} \times
    \C{\bld d_1,\bld d_2 }^{\gp}_{h\cJ_{0}}\ar[r]
    \ar[u]^{\sim}_{\id\times \id_{(\bld d_1,\bld d_2)^{\sqcup j}}} &
    F_{(\bld e_1,\bld e_2)^{\sqcup i}}^{\cJ}(*)_{h\cJ_{ie}} \times
    \C{\bld e_1,\bld e_2
    }^{\gp}_{h\cJ_{0}}\ar[u]^{\sim}_{\id\times \id_{(\bld
        e_1,\bld e_2)^{\sqcup jk}}} }
  \]
  where $d=d_2-d_1=ke$, and the arrows labeled $\id_{(\bld d_1,\bld d_2)^{\sqcup j}}$ and
  $\id_{(\bld e_1,\bld e_2)^{\sqcup jk}}$ are given by left
  translation by the images of these elements in the group
  completions. Since the vertical maps are weak homotopy equivalences
  as indicated, it remains to show that the map $\C{ \bld d_1,\bld
    d_2}^{\gp}_{h\cJ_{0}} \to \C{ \bld e_1,\bld
    e_2}^{\gp}_{h\cJ_{0}}$ induces an isomorphism in homology with
  $\bZ_{(p)}$-coefficients. 

  Using that $\C{ \bld{n_1},\bld{n_2}}_{h\cJ}^{\gp} \to \Omega B(\C{
    \bld{n_1},\bld{n_2}}_{h\cJ}^{\gp})$ is a weak equivalence, the
  next lemma shows that $\C{ \bld d_1,\bld d_2}^{\gp}_{h\cJ_{0}} \to
  \C{ \bld e_1,\bld e_2}^{\gp}_{h\cJ_{0}}$ induces multiplication
  by $k$ on the homotopy groups. Hence the map of homotopy groups
  becomes an isomorphism after tensoring with $\bZ_{(p)}$. This in
  turn implies that it induces an isomorphism in homology with
  $\bZ_{(p)}$-coefficients (see e.g.~\cite[Proposition V.3.2]{Bousfield-K_homotopy-limits}). 
\end{proof}

\begin{lemma}\label{lem:gp-compl-map-BJ-map}
  Let $k$ be a positive integer, let $(\bld{e_1},\bld{e_2})$ be an
  object of $\cJ$, let $(\bld{d_1},\bld{d_2})=
  (\bld{e_1},\bld{e_2})^{\concat k}$ and let $\C{\bld d_1,\bld d_2}\to
  \C{\bld e_1,\bld e_2}$ be the map determined by $\id_{(\bld d_1,\bld
    d_2)} \mapsto \id_{(\bld e_1,\bld e_2)}^{\boxtimes k}$. Then the
  induced map $ B(\C{ \bld d_1,\bld d_2}_{h\cJ}^{\gp})\to B(\C{ \bld
    e_1,\bld e_2 }_{h\cJ}^{\gp})$ acts as multiplication by $k$ on the
  homotopy groups. \end{lemma}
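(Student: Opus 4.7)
My plan is to identify $\phi \colon \C{\bld d_1,\bld d_2}^{\gp}_{h\cJ} \to \C{\bld e_1,\bld e_2}^{\gp}_{h\cJ}$, up to homotopy, with the multiplication-by-$k$ self-map $\Omega^\infty[k]$ of $QS^0$, and then pass to $B$ to conclude.

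The first step is to observe that both the source and target are grouplike $E_\infty$-spaces equivalent to $\Omega^\infty\bS = QS^0$; this is the Barratt--Priddy--Quillen theorem, consistent with the identification $(\C{x}^{\gp})_{h\cJ}\simeq QS^0$ recorded in Lemma~\ref{lem:hty-type-of-Dx}. Since $\phi$ is induced from a map of commutative $\cJ$-space monoids, it is in particular a map of grouplike $E_\infty$-spaces, and so corresponds under these equivalences to a homotopy class of self-maps $\widetilde\phi \colon \bS \to \bS$ of connective spectra. Because $[\bS,\bS] = \pi_0(\bS) = \bZ$, this class is determined by its action on $\pi_0$. I compute that action directly: by construction $\phi$ sends the class $[\id_{(\bld d_1,\bld d_2)}]$ generating $\pi_0$ of the source to $[\id_{(\bld e_1,\bld e_2)}^{\boxtimes k}]$, which equals $k$ times the generator of $\pi_0$ of the target. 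Hence $\widetilde\phi = [k]$, and $\phi$ acts as multiplication by $k$ on every homotopy group.

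To finish, I apply $B$. Since $\pi_n(BX) \cong \pi_{n-1}(X)$ for any grouplike simplicial monoid $X$, and $B\phi$ deloops $\phi$, the induced map $B\phi$ inherits the multiplication-by-$k$ action on all homotopy groups. The one delicate point is ensuring that the passage from $\phi$ to a well-defined self-map of the spectrum $\bS$ is valid; this relies on the Quillen equivalence between $\cC\cS^{\cJ}$ and the category of $E_\infty$-spaces over $B\cJ$ recalled in Section~\ref{sec:cyclic-bar}, under which grouplike objects correspond to connective spectra, so that a map of commutative $\cJ$-space monoids passes unambiguously to a homotopy class of connective spectrum maps.
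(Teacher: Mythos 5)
Your proof is correct, but it takes a genuinely different route from the paper's.

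The paper's argument is concrete and internal to the $\cJ$-space framework: it introduces the functor $\bld k^{\sqcup}\colon\Sigma\to\Sigma$ and the induced $\bld k^{\sqcup}\colon\cJ\to\cJ$, identifies this (up to natural isomorphism) with the $k$-fold monoidal power $(\bld n_1,\bld n_2)\mapsto(\bld n_1,\bld n_2)^{\sqcup k}$, and asserts that this power functor therefore induces multiplication by $k$ on $B(B\cJ)$. It then builds an explicit weak equivalence $B\Sigma_n\simeq(F^\cJ_{(\bld d_1,\bld d_2)}(*)^{\boxtimes n}/\Sigma_n)_{h\cJ}$ via translation categories to match the given map of commutative $\cJ$-space monoids with $\bld k^{\sqcup}\colon B\Sigma\to B\Sigma$, and finally transports the conclusion through the equivalences $B(B\Sigma)\simeq B(B\cJ)$ and $B(\C{\bld d_1,\bld d_2}_{h\cJ})\simeq B(\C{\bld d_1,\bld d_2}^{\gp}_{h\cJ})$. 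Your argument instead appeals to the recognition principle: $\phi$ is a map of grouplike $E_\infty$-spaces, hence deloops to a map of connective spectra between objects equivalent to $\bS$; since $[\bS,\bS]=\pi_0(\bS)=\bZ$, the homotopy class is detected by $\pi_0$, where you compute the degree to be $k$ from the explicit effect on the generator $\id_{(\bld d_1,\bld d_2)}$. This is shorter and more conceptual; the paper's version is more self-contained and avoids reliance on the $E_\infty$/spectrum dictionary, and it also sets up the explicit identification of $\C{\bld d_1,\bld d_2}_{h\cJ}$ with $B\Sigma$ that is convenient elsewhere. One small imprecision in your write-up: the Quillen equivalence quoted in Section~\ref{sec:cyclic-bar} is with $E_\infty$-spaces \emph{over} $B\cJ$, and it is not the case that grouplike objects in that overcategory are connective spectra; what you actually use is that $M_{h\cJ}$ carries an $E_\infty$-structure (from the lax symmetric monoidal structure of the homotopy colimit over the permutative $\cJ$), so that after forgetting the augmentation to $B\cJ$ and group-completing, the May--Segal recognition principle produces the connective spectrum and the well-defined homotopy class $\widetilde\phi$. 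With that correction to the attribution, the argument is sound.
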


\begin{proof}
  Let $\bld{k}$ be the finite set $\bld{k}=\{1,\dots,k\}$ and let
  $\bld k^{\sqcup}\colon \Sigma\to\Sigma$ be the functor taking $\bld
  n$ to the $n$-fold concatenation $\bld k^{\sqcup n}$ with its
  canonical left $\Sigma_n$-action. This is a strict symmetric
  monoidal functor and we use the same notation for the induced
  functor $\bld k^{\sqcup}\colon\cJ\to\cJ$. It is easy to see that
  $\bld k^{\sqcup}\colon\cJ\to\cJ$ is naturally isomorphic to the
  $k$-fold monoidal product $(\bld n_1,\bld n_2)\mapsto (\bld n_1,\bld
  n_2)^{\sqcup k}$. Hence the induced map $B(B\cJ) \to B(B\cJ)$ acts
  as multiplication by $k$ on the homotopy groups.

  Let $\tilde\Sigma_n$ be the translation category of $\Sigma_n$. Its
  objects are the elements in $\Sigma_n$ and its morphisms $a\colon
  b\to c$ are elements in $\Sigma_n$ such that $ab=c$. We consider the
  functor $\tilde\Sigma_n\to ((\bld d_1,\bld d_2)^{\sqcup
    n}\downarrow\cJ)$ that maps an object $a$ in $\tilde \Sigma_n$ to
  the isomorphism $a_*\colon (\bld d_1,\bld d_2)^{\sqcup n}\to (\bld
  d_1,\bld d_2)^{\sqcup n}$ induced by $a$ and the symmetry
  isomorphism. This defines a weak homotopy equivalence of
  $\Sigma_n$-orbit spaces
  \[
  B\Sigma_n \cong
  (B\tilde\Sigma_n)/\Sigma_n\xrightarrow{\sim}B((\bld d_1,\bld
  d_2)^{\sqcup n}\downarrow\cJ)/\Sigma_n \iso (F_{(\bld d_1,\bld
    d_2)}^{\cJ}(*)^{\boxtimes n}/\Sigma_n)_{h\cJ}.
  \]
  Assembling these maps for varying $n$ and forming the corresponding
  map for $\C{ \bld e_1,\bld e_2}_{h\cJ}$ defines the
  horizontal weak equivalences in the following commutative diagram of
  $E_{\infty}$ monoids:
  \[
  \xymatrix@-1pc{\C{ \bld d_1,\bld d_2}_{h\cJ} \ar[d] &&& B\Sigma
    \ar[d]^{\bld k^{\sqcup}} \ar[lll]_-{(\bld
      d_1,\bld d_2)^{\sqcup}}^{\sim}\\
    \C{\bld e_1,\bld e_2}_{h\cJ} &&& \ar[lll]_-{(\bld e_1,\bld
      e_2)^{\sqcup}}^{\sim} B\Sigma } \] Since the maps $B(B\Sigma) \to
  B(B\cJ)$ and $B(\C{ \bld d_1,\bld d_2}_{h\cJ}) \to B(\C{ \bld d_1,\bld
    d_2}_{h\cJ}^{\gp})$ are weak equivalences, the claim follows by the
  above observation about the map $B(B\cJ) \to B(B\cJ)$ induced by
  $\bld k^{\sqcup}\colon\cJ\to\cJ$.
\end{proof}

\section{The inclusion of the Adams summand}\label{sec:elog-etaleness}
Let $p$ be an odd prime. It is well-known that there are positive
fibrant commutative symmetric ring spectra $ku_{(p)}$ and $\ell$
modeling respectively the $p$-local connective complex $K$-theory
spectrum and the corresponding Adams summand. Furthermore, we may
assume that the inclusion of the Adams summand $f\colon \ell\to
ku_{(p)}$ is realized as a positive fibration of commutative symmetric
ring spectra, that the corresponding periodic theories are also
represented by positive fibrant commutative symmetric ring spectra
$KU_{(p)}$ and $L$, and that there is a commutative square
\begin{equation}\label{eq:l-ku-L-KU}
\xymatrix@-1pc{ \ell \ar[d]_f \ar[r] & L \ar[d] \\ ku_{(p)} \ar[r] & KU_{(p)} }
\end{equation}
in $\cC\Spsym$. This is explained in \cite[Section
4.12]{Sagave_log-on-k-theory} and is based on work by Baker and
Richter~\cite{Baker-R_numerical}.

The graded rings of homotopy groups are given by
$\pi_*(ku_{(p)})=\bZ_{(p)}[u]$ with $|u|=2$ and
$\pi_*(\ell)=\bZ_{(p)}[v]$ with $|v|=2(p-1)$, and under these
isomorphisms $f$ corresponds to the ring homomorphism $\bZ_{(p)}[v]\to
\bZ_{(p)}[u]$ taking $v$ to $u^{p-1}$. It is proved in
\cite[Proposition 4.13]{Sagave_log-on-k-theory} that one may choose
representatives $v$ in $\Omega^{\cJ}(\ell)(\bld{p-1},\bld{3(p-1)})$
and $u$ in $\Omega^{\cJ}( ku_{(p)})(\bld 1,\bld 3)$ as well as a map
$f^{\flat}\colon D(v) \to D(u)$ relating the commutative $\cJ$-space
monoids $D(v)$ and $D(u)$ from Construction~\ref{constr:D-x} such that
there is a commutative diagram of pre-log ring spectra
\begin{equation}\label{eq:l-ku-Dv-Du-pre-log}\xymatrix@-1pc{
(\ell, D(v)) \ar[d] \ar[rr] && (\ell, j_*\!\GLoneJof(L)) \ar[d] \\
(ku_{(p)}, D(u)) \ar[rr] && (ku_{(p)}, j_*\!\GLoneJof( KU_{(p)})).  
}
\end{equation}
In the diagram, the horizontal maps are induced by the maps $D(v) \to
j_*\!\GLoneJof(L)$ and $D(u) \to j_*\!\GLoneJof(KU_{(p)})$
from~\eqref{eq:Cx-Dx-iGloneJof-sequence}.

\subsection{Formally log \texorpdfstring{$\THH$}{THH}-\'{e}tale maps}
In analogy with the notion of \emph{formally $\THH$-{\'e}tale} maps
defined in~\cite[\S 9.2]{Rognes_Galois}, we say that a map of pre-log ring spectra
 $(A,M) \to (B,N)$
is \emph{formally log $\THH$-{\'e}tale} if the induced square
\[\xymatrix@-1pc{A \ar[rr] \ar[d] && \THH(A,M) \ar[d] \\ B \ar[rr] && \THH(B,N)}\]
is a homotopy cocartesian diagram of commutative symmetric ring spectra. 
\begin{theorem}\label{thm:ell-ku-direct-image-logthh-etale}
  The map of log ring spectra $(\ell, j_*\!\GLoneJof(L)) \to
  (ku_{(p)}, j_*\!\GLoneJof(KU_{(p)}))$ is formally \emph{log
    $\THH$-{\'e}tale}.
\end{theorem}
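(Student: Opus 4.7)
My strategy is to reduce via logification invariance to the map $(\ell, D(v)) \to (ku_{(p)}, D(u))$, and then verify two homotopy cocartesian squares that together force the formally log $\THH$-\'{e}tale property.

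By Proposition~\ref{prop:logification-of-D-of-x}, the logification maps $(\ell, D(v)) \to (\ell, j_*\!\GLoneJof(L))$ and $(ku_{(p)}, D(u)) \to (ku_{(p)}, j_*\!\GLoneJof(KU_{(p)}))$ are stable equivalences on log $\THH$. Combined with preservation of stable equivalences under extension of scalars along $\ell \to ku_{(p)}$ (as in the argument of \cite[Lemma~4.8]{RSS_LogTHH-I}), it suffices to prove that the map $(\ell, D(v)) \to (ku_{(p)}, D(u))$, with pre-log structure map $f^{\flat}\colon D(v) \to D(u)$ from~\eqref{eq:l-ku-Dv-Du-pre-log}, is formally log $\THH$-\'{e}tale.

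The plan rests on establishing two homotopy cocartesian squares in commutative symmetric ring spectra. First, the square
\[
\xymatrix@-1pc{\bS^{\cJ}[D(v)] \ar[r] \ar[d] & \ell \ar[d] \\ \bS^{\cJ}[D(u)] \ar[r] & ku_{(p)}}
\]
should be homotopy cocartesian, i.e., the canonical map $\ell \sm^L_{\bS^{\cJ}[D(v)]} \bS^{\cJ}[D(u)] \to ku_{(p)}$ should be a stable equivalence. To establish this, combine the graded Thom isomorphism (Proposition~\ref{prop:thom-iso-homology}) with Lemma~\ref{lem:free-p-local-homology} applied with $k=p-1$: this identifies $H_*(\bS^{\cJ}[D(u)]; \bZ_{(p)})$ as a free module of rank $p-1$ over $H_*(\bS^{\cJ}[D(v)]; \bZ_{(p)})$ on generators arising from $1,u,\dots,u^{p-2}$, and matching these with the known $\ell$-basis of $ku_{(p)}$ yields a $p$-local stable equivalence (which suffices since both spectra are $p$-local). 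Second, one establishes that the canonical map
\[
\bS^{\cJ}[D(u)] \sm^L_{\bS^{\cJ}[D(v)]} \bS^{\cJ}[B^{\rep}(D(v))] \to \bS^{\cJ}[B^{\rep}(D(u))]
\]
is a stable equivalence. For this, apply Proposition~\ref{prop:M-N-rep-homotopy-cocartesian} to $f^{\flat}$: the question reduces to whether $\bS^{\cJ}[D(u)] \sm B(D(v)_{h\cJ})_+ \to \bS^{\cJ}[D(u)] \sm B(D(u)_{h\cJ})_+$ is a stable equivalence. By Lemma~\ref{lem:hty-type-of-Dx}, both $D(v)_{h\cJ}$ and $D(u)_{h\cJ}$ are weakly equivalent to $Q_{\geq 0}S^0$, and under this identification $f^{\flat}$ corresponds to the $(p-1)$-fold concatenation; Lemma~\ref{lem:gp-compl-map-BJ-map} then shows that the induced map $B(D(v)_{h\cJ}^{\gp}) \to B(D(u)_{h\cJ}^{\gp})$ acts by multiplication by $p-1$ on homotopy groups, which is a $p$-local equivalence.

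Since $\THH(-)$, being a tensor with $S^1$ in commutative ring spectra, commutes with homotopy pushouts, the first square yields $\THH(ku_{(p)}) \simeq \THH(\ell) \sm^L_{\bS^{\cJ}[B^{\cy}(D(v))]} \bS^{\cJ}[B^{\cy}(D(u))]$. Combining this with the second square and rearranging the pushout defining $\THH(ku_{(p)}, D(u))$ via associativity gives
\[
\THH(ku_{(p)}, D(u)) \simeq \THH(\ell, D(v)) \sm^L_{\bS^{\cJ}[D(v)]} \bS^{\cJ}[D(u)] \simeq ku_{(p)} \sm^L_\ell \THH(\ell, D(v)),
\]
as required. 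The main technical obstacle is upgrading the $p$-local homological rank count underlying the first cocartesian square to a genuine stable equivalence of commutative $\ell$-algebras: while the graded Thom isomorphism gives an isomorphism of graded homology algebras, matching the multiplicative generator $u$ of $\bS^{\cJ}[D(u)]$ with $u \in \pi_*(ku_{(p)})$ requires explicit compatibility checks with the Thom class, and these compatibilities must be promoted through the pushout computation to conclude.
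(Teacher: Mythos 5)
Your proposal is correct and follows essentially the same route as the paper: reduce via Proposition~\ref{prop:logification-of-D-of-x} to $(\ell, D(v)) \to (ku_{(p)}, D(u))$, establish that $\bS^{\cJ}[D(v)] \to \ell$, $\bS^{\cJ}[D(u)] \to ku_{(p)}$ is homotopy cocartesian (this is Proposition~\ref{prop:Dv-Du-ell-ku-hty-cocartesian}, proved via the graded Thom isomorphism and Lemma~\ref{lem:free-p-local-homology}), apply $\THH$ to get the corresponding square of cyclic bar constructions, and handle the replete bar side via Proposition~\ref{prop:M-N-rep-homotopy-cocartesian} together with Lemmas~\ref{lem:hty-type-of-Dx} and~\ref{lem:gp-compl-map-BJ-map}. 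The paper organizes the final pushout-rearranging step as a commutative cube of homotopy cocartesian faces rather than invoking ``$\THH$ commutes with homotopy pushouts'' explicitly, but these are the same argument.
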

By Proposition~\ref{prop:logification-of-D-of-x}, the horizontal maps
in~\eqref{eq:l-ku-Dv-Du-pre-log} induce stable equivalences when
applying $\THH$. This reduces
Theorem~\ref{thm:ell-ku-direct-image-logthh-etale} to the next
statement.
\begin{theorem}\label{thm:ell-ku-Du-Dv-logthh-etale}
  The map of pre-log ring spectra $(\ell, D(v)) \to
  (ku_{(p)}, D(u))$ is formally \emph{log
    $\THH$-{\'e}tale}.
\end{theorem}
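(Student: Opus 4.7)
To prove Theorem~\ref{thm:ell-ku-Du-Dv-logthh-etale} I need to show that
\[ku_{(p)}\sm_\ell \THH(\ell, D(v))\;\longrightarrow\; \THH(ku_{(p)}, D(u))\]
is a stable equivalence of commutative symmetric ring spectra. Both sides are connective and $p$-local (the latter because they are algebras over $\THH(\ell)$ and $\THH(ku_{(p)})$ respectively), so by the Hurewicz theorem for connective $p$-local spectra it suffices to establish an $H\bZ_{(p)}$-homology equivalence.

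My plan is to expand both sides using the pushout presentation $\THH(A,M) = \THH(A)\sm_{\bS^{\cJ}[B^{\cy}(M)]}\bS^{\cJ}[B^{\rep}(M)]$ and to factor the map of interest through the intermediate object
\[(ku_{(p)}\sm_\ell\THH(\ell))\sm_{\bS^{\cJ}[B^{\cy}(D(u))]}\bS^{\cJ}[B^{\rep}(D(u))].\]
This splits the verification into two steps: (a) a \emph{log structure change}, governed by the repletion square
\[\xymatrix@-1pc{\bS^{\cJ}[B^{\cy}(D(v))] \ar[r]\ar[d] & \bS^{\cJ}[B^{\rep}(D(v))] \ar[d] \\ \bS^{\cJ}[B^{\cy}(D(u))] \ar[r] & \bS^{\cJ}[B^{\rep}(D(u))]}\]
being $H\bZ_{(p)}$-cocartesian after the appropriate base change, and (b) a \emph{ring spectrum change} in which the map $ku_{(p)}\sm_\ell\THH(\ell)\to\THH(ku_{(p)})$ must become an $H\bZ_{(p)}$-equivalence after base change to $\bS^{\cJ}[B^{\rep}(D(u))]$ over $\bS^{\cJ}[B^{\cy}(D(u))]$.

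For (a) I would adapt Proposition~\ref{prop:M-N-rep-homotopy-cocartesian}, whose proof rests on the splitting $B^{\rep}(M) \simeq M \times B(M^{\gp}_{h\cJ})$ from Proposition~\ref{prop:BrepM-simeq-MtimesBMhJ}; together with the grouplike splitting of $B^{\cy}$ from Proposition~\ref{prop:grouplike-M-VM-equivalence} (applied to the group completions $D(v)^{\gp}$ and $D(u)^{\gp}$), the cocartesian-ness reduces to showing that $B(D(v)_{h\cJ}) \to B(D(u)_{h\cJ})$ is an $H\bZ_{(p)}$-equivalence. By Lemma~\ref{lem:gp-compl-map-BJ-map} applied with $k = p-1$, this map acts as multiplication by $p-1$ on $\pi_*$, which is invertible in $\bZ_{(p)}$, hence the map is an $H\bZ_{(p)}$-equivalence. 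The graded Thom isomorphism (Proposition~\ref{prop:thom-iso-homology}) then propagates the equivalence to the Thom spectra.

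For (b), Lemma~\ref{lem:free-p-local-homology} provides the key input: $H_*(\bS^{\cJ}[D(u)];\bZ_{(p)})$ is free of rank $p-1$ over $H_*(\bS^{\cJ}[D(v)];\bZ_{(p)})$, which matches the $\ell$-rank of $ku_{(p)}$. The main obstacle will be precisely this step, since the plain map $\ell\to ku_{(p)}$ is not formally $\THH$-\'etale on its own; the log structure supplied by the replete bar construction is exactly what cancels this discrepancy, and this is the essential content of the theorem. Making the compatibility explicit requires a simultaneous use of the graded Thom isomorphism with the homotopy-type identifications of Lemma~\ref{lem:hty-type-of-Dx} and the module-theoretic information from Lemma~\ref{lem:free-p-local-homology}.
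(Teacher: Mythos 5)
Your plan correctly identifies the two ingredients that control the proof — the $H\bZ_{(p)}$-cocartesian property of the repletion square via Proposition~\ref{prop:M-N-rep-homotopy-cocartesian} and Lemma~\ref{lem:gp-compl-map-BJ-map}, and a freeness statement along $\ell\to ku_{(p)}$ — but the proposal has a structural gap in two related places, and step (b) is never actually carried out.

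First, the intermediate object $(ku_{(p)}\sm_\ell\THH(\ell))\sm_{\bS^{\cJ}[B^{\cy}(D(u))]}\bS^{\cJ}[B^{\rep}(D(u))]$ is not defined without supplying a $\bS^{\cJ}[B^{\cy}(D(u))]$-algebra structure on $ku_{(p)}\sm_\ell\THH(\ell)$ compatible with the one on $\THH(ku_{(p)})$. The cyclic bar construction augments to $\bS^{\cJ}[D(u)]$ and from there to $ku_{(p)}$, but this augmented structure is \emph{not} compatible with the map $\bS^{\cJ}[B^{\cy}(D(u))]\simeq\THH(\bS^{\cJ}[D(u)])\to\THH(ku_{(p)})$ (one collapses the cyclic structure, the other keeps it), so the map out of $ku_{(p)}\sm_\ell\THH(\ell,D(v))$ and the map into $\THH(ku_{(p)},D(u))$ cannot both be made $\bS^{\cJ}[B^{\cy}(D(u))]$-module maps with this choice. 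The natural way to obtain a compatible structure is precisely via the equivalence $ku_{(p)}\simeq\bS^{\cJ}[D(u)]\sm_{\bS^{\cJ}[D(v)]}\ell$, i.e.\ Proposition~\ref{prop:Dv-Du-ell-ku-hty-cocartesian} — which your proposal never invokes at the spectrum level.

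Second, step (b) — that $ku_{(p)}\sm_\ell\THH(\ell)\to\THH(ku_{(p)})$ becomes an $H\bZ_{(p)}$-equivalence after the base change — is acknowledged as ``the main obstacle'' but not proven. Lemma~\ref{lem:free-p-local-homology} gives the module-theoretic input, but that alone does not establish the spectrum-level statement; one also needs that the specified generators lift to an $\ell$-module splitting of $ku_{(p)}$ (so that the relevant $\Tor$ spectral sequence collapses with the right identification of classes), which is exactly the content of the proof of Proposition~\ref{prop:Dv-Du-ell-ku-hty-cocartesian}. The paper sidesteps the need to define an intermediate object by instead assembling the large commutative ``prism'' diagram: Proposition~\ref{prop:Dv-Du-ell-ku-hty-cocartesian} handles the top face and (after applying $\THH$) the inner square, the pushout definition of log $\THH$ handles the two lower side faces, and then a twofold application of the cube/pasting lemma reduces the back face to the front face, which is where your step~(a) argument (Proposition~\ref{prop:M-N-rep-homotopy-cocartesian} plus Lemma~\ref{lem:gp-compl-map-BJ-map}) enters. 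So your step~(a) is essentially right; the missing content is a genuine spectrum-level homotopy cocartesian statement for $\bS^{\cJ}[D(v)]\to\ell$ versus $\bS^{\cJ}[D(u)]\to ku_{(p)}$, not merely its consequence in $H\bZ_{(p)}$-homology.
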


\begin{proof}
We need to show that the square given by the back face of the following
commutative diagram in $\cC\Spsym$ is homotopy cocartesian:
\[\xymatrix@-1.5pc{
  & \ell \ar'[d][dd] \ar[rr]& & ku_{(p)}
  \ar[dd]\\ \bS^{\cJ}[D(v)]\ar[ur]\ar[dd] \ar[rr] & &
  \bS^{\cJ}[D(u)]\ar[dd]\ar[ur] \\ & \THH(\ell) \ar'[d][dd]
  \ar'[r][rr] & & \THH(ku_{(p)}) \ar[dd]
  \\ \bS^{\cJ}[B^{\cy}(D(v))]\ar[dd] \ar[ur]\ar[rr] & &
  \bS^{\cJ}[B^{\cy}(D(u))] \ar[ur]\ar[dd]\\ & \THH(\ell, D(v))
  \ar'[r][rr]& & \THH(ku_{(p)}, D(u))\, .\\ \bS^{\cJ}[B^{\rep}(D(v))]
  \ar[ur]\ar[rr] & & \bS^{\cJ}[B^{\rep}(D(u))] \ar[ur] }\] The lower
left hand face and the lower right hand face are homotopy cocartesian
squares by definition. The inner square is homotopy cocartesian since
it results from applying $\THH$ to the homotopy cocartesian square of
Proposition~\ref{prop:Dv-Du-ell-ku-hty-cocartesian} below. Hence the bottom
face of the diagram is homotopy cocartesian. The top face of the
diagram is homotopy cocartesian by
Proposition~\ref{prop:Dv-Du-ell-ku-hty-cocartesian}. Since the back face is
already $p$-local, it is therefore enough to show that the front face
becomes a homotopy cocartesian square after $p$-localization.

Using Proposition~\ref{prop:M-N-rep-homotopy-cocartesian} it suffices
to show that $B(D(v)_{h\cJ})\to B(D(u)_{h\cJ})$ induces an isomorphism
in homology with $\bZ_{(p)}$-coefficients. By
Lemma~\ref{lem:hty-type-of-Dx}, this is equivalent to showing that $
B(\C{ v}^{\gp}_{h\cJ})\to B(\C{u}^{\gp}_{h\cJ})$ induces an isomorphism
in homology with $\bZ_{(p)}$-coefficients. Lemma~\ref{lem:gp-compl-map-BJ-map} shows that the latter
map acts as multiplication by $(p-1)$ on the homotopy groups. Hence it
induces an isomorphism on homotopy groups after tensoring with
$\bZ_{(p)}$, which in turn implies that the map in homology with
$\bZ_{(p)}$-coefficients is an isomorphism.
\end{proof}

The next lemma was used in the proof of
Theorem~\ref{thm:ell-ku-Du-Dv-logthh-etale}. The lemma is identical
to~\cite[Proposition 4.15]{Sagave_log-on-k-theory}, but we provide a
more conceptual argument that is based on the graded Thom isomorphism
of Proposition~\ref{prop:graded-thom}:
\begin{proposition}\label{prop:Dv-Du-ell-ku-hty-cocartesian}
The commutative square of commutative symmetric ring spectra
\[\xymatrix@-1pc{
\bS^{\cJ}[D(v)] \ar[r] \ar[d] &  \ell \ar[d]\\
 \bS^{\cJ}[D(u)] \ar[r]  & ku_{(p)}
}\]
is homotopy cocartesian. 
\end{proposition}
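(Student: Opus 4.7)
The plan is to reduce the statement to an $H\bZ_{(p)}$-homology calculation and then apply the graded Thom isomorphism together with Lemma~\ref{lem:free-p-local-homology}. After factoring $\bS^{\cJ}[D(v)]\to \bS^{\cJ}[D(u)]$ as a cofibration followed by a stable equivalence in $\cC\Spsym$, the square is homotopy cocartesian precisely when the induced map
\[
\phi\colon \bS^{\cJ}[D(u)]\sm_{\bS^{\cJ}[D(v)]}\ell\to ku_{(p)}
\]
is a stable equivalence. Since both sides are $p$-local, it suffices to show that $\phi$ induces an isomorphism on homology with $\bZ_{(p)}$-coefficients.

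Smashing the relative smash product with $H\bZ_{(p)}$ makes available a K{\"u}nneth spectral sequence
\[
E^2_{s,t}=\Tor^{H_*(\bS^{\cJ}[D(v)];\bZ_{(p)})}_{s,t}\bigl(H_*(\bS^{\cJ}[D(u)];\bZ_{(p)}),H_*(\ell;\bZ_{(p)})\bigr)\Rightarrow H_{s+t}\bigl(\bS^{\cJ}[D(u)]\sm_{\bS^{\cJ}[D(v)]}\ell;\bZ_{(p)}\bigr).
\]
Propositions~\ref{prop:graded-thom} and~\ref{prop:thom-iso-homology}, combined with the $\cJ$-equivalences $D(v)\to \C{v}^{\gp}_{\geq 0}$ and $D(u)\to \C{u}^{\gp}_{\geq 0}$ furnished by Lemma~\ref{lem:hty-type-of-Dx}, identify the relevant homology algebras with those of $\bS^{\cJ}[\C{v}^{\gp}_{\geq 0}]$ and $\bS^{\cJ}[\C{u}^{\gp}_{\geq 0}]$.

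Under these identifications the map $f^{\flat}\colon D(v)\to D(u)$ entering~\eqref{eq:l-ku-Dv-Du-pre-log} corresponds to the map of free commutative $\cJ$-space monoids determined by $v\mapsto u^{p-1}$, putting us in the situation of Lemma~\ref{lem:free-p-local-homology} with $(\bld d_1,\bld d_2)=(\bld{p-1},\bld{3(p-1)})$, $(\bld e_1,\bld e_2)=(\bld 1,\bld 3)$, $e=2$, and $k=p-1$, with $k$ coprime to $p$. That lemma then yields that $H_*(\bS^{\cJ}[D(u)];\bZ_{(p)})$ is a free module over $H_*(\bS^{\cJ}[D(v)];\bZ_{(p)})$ with basis $1,u,\dots,u^{p-2}$. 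Consequently the higher $\Tor$-groups vanish, the spectral sequence collapses at $E^2$, and
\[
H_*\bigl(\bS^{\cJ}[D(u)]\sm_{\bS^{\cJ}[D(v)]}\ell;\bZ_{(p)}\bigr)\iso \bigoplus_{i=0}^{p-2}u^i\cdot H_*(\ell;\bZ_{(p)})
\]
as $H_*(\ell;\bZ_{(p)})$-modules.

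To conclude, the Adams splitting provides a decomposition $ku_{(p)}\simeq \bigvee_{i=0}^{p-2}\Sigma^{2i}\ell$ of $\ell$-module spectra, so $H_*(ku_{(p)};\bZ_{(p)})$ is free over $H_*(\ell;\bZ_{(p)})$ on the same classes $1,u,\dots,u^{p-2}$, and $\phi_*$ is seen by construction to carry the canonical generator $u^i\otimes 1$ on the left to the generator $u^i$ on the right. I expect the main obstacle to be the naturality bookkeeping: one must track the chain of equivalences provided by Lemma~\ref{lem:hty-type-of-Dx} and Proposition~\ref{prop:thom-iso-homology} with enough care that $f^{\flat}$ really does correspond to the map appearing in Lemma~\ref{lem:free-p-local-homology} under the Thom isomorphism identification, so that freeness transfers with the intended choice of generators and $\phi_*$ is recognised as the identity on this basis.
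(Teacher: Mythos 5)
Your proposal is correct and takes essentially the same approach as the paper's proof: reduce to $H\bZ_{(p)}$-homology, apply the $\Tor$ spectral sequence, identify the homology of the graded suspension spectra via the graded Thom isomorphism, and invoke the freeness from Lemma~\ref{lem:free-p-local-homology} with $k = p-1$. The naturality bookkeeping you flag is handled in the paper by factoring the maps through $\bS^{\cJ}[F^{\cJ}_{(\bld 1,\bld 3)^{\sqcup i}}(*)]$, which sits over both $\bS^{\cJ}[D(u)]$ (via $\bS^{\cJ}[\C{u}]$) and $ku_{(p)}$ (representing $u^i$), so the module generators of Lemma~\ref{lem:free-p-local-homology} and of $ku_{(p)}$ over $\ell'$ are anchored to the same spectra and are compatible under $\phi$.
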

\begin{proof}
  If we factor $\bS^{\cJ}[D(v)]\to \ell$ as a cofibration
  $\bS^{\cJ}[D(v)]\to \ell'$ followed by an acyclic fibration
  $\ell'\to \ell$, then we have to show that the induced map
  \[
  \bS^{\cJ}[D(u)]\wedge_{\bS^{\cJ}[D(v)]}\ell'\to ku_{(p)}
  \]
  is a stable equivalence. These are $p$-local connective spectra, so
  it suffices to show that the latter map induces an isomorphism in
  spectrum homology with $\bZ_{(p)}$-coefficients.  Consider for each
  $i=0,\dots,p-2$ the map of symmetric spectra
  \[
  \bS^{\cJ}[F^{\cJ}_{(\bld 1,\bld 3)^{\sqcup i}}(*)]\to
  \bS^{\cJ}[\C{u}]\to ku_{(p)}.
  \]
  The symmetric spectrum $\bS^{\cJ}[F^{\cJ}_{(\bld 1,\bld 3)^{\sqcup
      i}}(*)]$ represents the suspension $\Sigma^{2i}\bS$ as an object
  in the stable homotopy category and the map represents the $i$-fold
  product $u^i$ in $\pi_{2i}(ku_{(p)})$. Hence it follows that the
  composite map
\begin{equation}\label{eq:p-local-ku-splitting}
\bigvee_{i=0}^{p-2}\bS^{\cJ}[F^{\cJ}_{(\bld 1,\bld 3)^{\sqcup i}}(*)]\wedge \ell' \to ku_{(p)}\wedge ku_{(p)}\to ku_{(p)}
\end{equation}
is a stable equivalence of $\ell'$-module spectra.

Using the $\cJ$-equivalences $D(v) \to \C{v}^{\gp}_{\geq 0}$ and $D(u)
\to \C{u}^{\gp}_{\geq 0}$ from Lemma~\ref{lem:hty-type-of-Dx}, and
Lemma~\ref{lem:free-p-local-homology}, it follows that
$H_*(\bS^{\cJ}[D(u)];\bZ_{(p)})$ is a free
$H_*(\bS^{\cJ}[D(v)];\bZ_{(p)})$-module.  Inspecting
Lemma~\ref{lem:free-p-local-homology}, we see that it is generated by
the images of the canonical generators of $\Sigma^{2i}\bZ_{(p)}$ under
the homomorphisms
\[
\Sigma^{2i}\bZ_{(p)}\cong H_*(\bS^{\cJ}[F^{\cJ}_{(\bld 1,\bld 3)^{\sqcup i}}(*)];\bZ_{(p)})
\to H_*(\bS^{\cJ}[\C{u}];\bZ_{(p)}) \to H_*(\bS^{\cJ}[D(u)];\bZ_{(p)})
\]
for $i=0,\dots,p-2$. Hence the
$\Tor$ spectral sequence \cite{EKMM}*{Theorem~IV.4.1}
\[
\mathrm{Tor}_{s,t}^{E_*(\bS^{\cJ}[D(v)])}(E_*(\bS^{\cJ}[D(u)]), E_*(\ell'))
\Longrightarrow E_{s+t}(\bS^{\cJ}[D(u)]\wedge_{\bS^{\cJ}[D(v)]}\ell')
\]
with $E=H\bZ_{(p)}$ collapses. The result follows from this since the
specified generators are compatible with the stable equivalence in
\eqref{eq:p-local-ku-splitting}.
\end{proof}

\section{Logarithmic \texorpdfstring{$\THH$}{THH} of the Adams summand}\label{sec:log-THH-ell}
We will now demonstrate that the current definition of logarithmic
topological Hochschild homology, $\THH(A, M)$, in terms of $\THH(A)$
and the cyclic and replete bar constructions, lends itself to nontrivial
explicit computations, going beyond the cases of discrete rings previously
studied by Hesselholt--Madsen \cite{Hesselholt-M_local_fields}*{\S2}.  In particular, we will
realize the program to compute $V(1)_* \THH(ku) \cong V(1)_* \THH(\kup)$
outlined by Ausoni in \cite{Ausoni_THH-ku}*{\S10}, using $\THH(\ell, D(v))$
and $\THH(\kup, D(u))$ in place of the then-hypothetical constructions
$\THH(\ell|L)$ and $\THH(\kup|\KUp)$.  In Theorem~\ref{thm:ausoni} we
use this to recover the full algebra structure on $V(1)_* \THH(ku)$,
for $p\ge5$.  The subsequent construction of $\THH(\ell|L)$ and
$\THH(\kup|\KUp)$ by Blumberg--Mandell~\cite{Blumberg-M_loc-sequenceTHH}, using simplicially
enriched Waldhausen categories, is not known to lend itself to such
explicit calculations.  On the other hand, their models are known to admit
good trace maps from algebraic $K$-theory, so one may hope to prove that
the two constructions are equivalent, in the cases where both are defined.

\begin{notation}\label{nota:Smith-Toda-complex}
For any prime~$p$, let $H = H\bF_p$ be the mod~$p$ Eilenberg--Mac\,Lane
spectrum and write $H_* X = \pi_*(H \wedge X)$ for the mod~$p$ homology
groups of~$X$.  For $p\ge3$ let
\[
V(1) = \cone(v_1 \: \Sigma^{2p-2} S/p \to S/p)
	\simeq S \cup_p e^1 \cup_{\alpha_1} e^{2p-1} \cup_p e^{2p}
\]
be the Smith--Toda complex of type~$2$.  When $p\ge5$ it admits the
structure of a homotopy commutative ring
spectrum~\cite{Oka_few-cells}, and $V(1) \to H$ is a map of homotopy
commutative ring spectra.  Write $V(1)_* X = \pi_*(V(1) \wedge X)$ for
the $V(1)$-homotopy groups of~$X$.

When $X$ is a ring spectrum and $p\ge5$, $V(1)_* X$ becomes a graded
$\bF_p$-algebra, and $V(1)_*X \to H_*X$ is an algebra homomorphism.
When $X$ has a circle action $S^1_+ \wedge X \to X$, $V(1)_*X$
inherits a suspension operator \[\sigma \: V(1)_* X \to
V(1)_{*+1}(S^1_+ \wedge X) \to V(1)_{*+1} X,\] compatible with the
operator $\sigma \: H_*X \to H_{*+1}(S^1_+ \wedge X) \to H_{*+1} X$.
When the adjoint circle action $X \to F(S^1_+, X)$ is a ring spectrum
map, both suspension operators are
derivations~\cite[Proposition~5.10]{Angeltveit-R_Hopf-algebra}.

There is an equivalence $V(1) \wedge \ell \simeq H$ of homotopy
commutative $\ell$-algebras.  When $X$ is an $\ell$-module or
commutative $\ell$-algebra there is an equivalence $V(1) \wedge X
\simeq H \wedge_\ell X$ of spectra or homotopy commutative ring
spectra, respectively, and a corresponding isomorphism $V(1)_* X \cong
\pi_*(H \wedge_\ell X)$. (Smash products are understood as left derived
smash products here.) In particular, the natural map $V(1)_* X \to
H_* X$ is split injective. When $p=3$ and $X$ is a commutative
$\ell$-algebra we give $V(1)_* X$ the algebra structure from $\pi_*(H
\wedge_\ell X)$.
\end{notation}

Let $(A, M)$ be a pre-log ring spectrum with $M = M_{\ge0}$ concentrated
in non-negative degrees.  Suppose, without loss of generality, that
$(A, M)$ is cofibrant, so that the maps $\bS^{\cJ}[M] \to A$ and
$\bS^{\cJ}[B^{\cy}(M)] \to \THH(A)$ are cofibrations of commutative
symmetric ring spectra.  This ensures that the smash
product
\[
\THH(A, M) = \THH(A) \wedge_{\bS^{\cJ}[B^{\cy}(M)]} \bS^{\cJ}[B^{\rep}(M)]
\]
captures the correct homotopy type.

In order to determine the structure of a K{\"u}nneth
spectral sequence associated to this smash product, we shall
use a natural chain of $\bS^\cJ[B^{\cy}(M)]$-module maps
\[
\bS^\cJ[B^{\rep}(M)] \longto
\bS^\cJ[B^{\cy}_{\{0\}}(M^{\gp})] \longleftarrow
\bS^\cJ[B^{\cy}(M_{\{0\}})] \,.
\]
The left hand map is defined by first projecting onto $\bS^\cJ[B^{\rep}_{\{0\}}(M)]$ as in \cite[Definition~6.9]{RSS_LogTHH-I} and then composing with the canonical map to $\bS^\cJ[B^{\cy}_{\{0\}}(M^{\gp})]$, while the right hand map uses 
the identification $B^{\cy}(M_{\{0\}}) = B^{\cy}_{\{0\}}(M)$ and the group completion
map $M \to M^{\gp}$.
Hence there is a chain of $\THH(A)$-module maps
\begin{equation}\label{eq:THHAM-THHAmommodM}
\THH(A, M) \longto \THH(A) \wedge_{\bS^\cJ[B^{\cy}(M)]}
	\bS^\cJ[B^{\cy}_{\{0\}}(M^{\gp})]
\longleftarrow \THH(\AmodmodM) \,,
\end{equation}
where $\AmodmodM = A \wedge_{\bS^\cJ[M]} \bS^\cJ[M_{\{0\}}]$.

We shall use a corresponding chain of K{\"u}nneth spectral sequences to
transport information about the spectral sequence for $\THH(\AmodmodM)$
to the spectral sequence for $\THH(A, M)$.  This chain has the
following $E^2$-terms:
\begin{multline*}
\Tor^{H_* \bS^\cJ[B^{\cy}(M)]}_{**}
	(V(1)_* \THH(A), H_* \bS^{\cJ}[B^{\rep}(M)]) \\
\longto
\Tor^{H_* \bS^\cJ[B^{\cy}(M)]}_{**}
	(V(1)_* \THH(A), H_* \bS^{\cJ}[B^{\cy}_{\{0\}}(M^{\gp})]) \\
\longleftarrow
\Tor^{H_* \bS^\cJ[B^{\cy}(M)]}_{**}
	(V(1)_* \THH(A), H_* \bS^{\cJ}[B^{\cy}(M_{\{0\}})])
\end{multline*}
and converges to the $V(1)$-homotopy of the chain of $\THH(A)$-modules
displayed above. It will be constructed in the proof of Theorem~\ref{thm:thhelldv}.
The reader may want to compare the following calculations
with those in~\cite[Section~5]{RSS_LogTHH-I}, where we handled the case of a discrete
pre-log structure.

Our assumption that $(A,M)$ is cofibrant implies that $A$ is a cofibrant commutative symmetric ring spectrum. Now suppose in addition that $A$ is augmented over $H \bF_p$, and let $H \to H\bF_p$ be a cofibrant replacement in commutative $A$-algebras. If~$X$ is an $A$-module, we write $H^A_*(X) = \pi_*(H\sm_A X)$.  Using the isomorphism
\[
\THH(A) \wedge_{\bS^\cJ[B^{\cy}(M)]} \bS^\cJ[B^{\cy}(M_{\{0\}})]
	\iso \THH(A/(M_{>0}))
\]
and the maps $\bS \to A\to H$ we get a pushout square
of commutative $H$-algebras
\[
\xymatrix@-1pc{
H \wedge \bS^\cJ[B^{\cy}(M)] \ar[rr] \ar[d]
	&& H \wedge \bS^\cJ[B^{\cy}(M_{\{0\}})] \ar[d] \\
H \wedge_{A} \THH(A) \ar[rr]
	&& H \wedge_{A} \THH(A/(M_{>0}))\ .
}
\]
which is homotopy cocartesian by our cofibrancy assumptions.  We first study the associated
$\Tor$ spectral sequence 
\begin{equation} \label{eq:thhamgt0ss}
\begin{aligned}
E^2_{**} &= \Tor^{H_* \bS^\cJ[B^{\cy}(M)]}_{**}
	(H^A_* \THH(A), H_* \bS^\cJ[B^{\cy}(M_{\{0\}})]) \\
&\Longrightarrow H^A_* \THH(A/(M_{>0})) \,.
\end{aligned}
\end{equation}
We shall use the notation $d^r(x) \doteq y$ to
indicate that $d^r(x)$ equals a unit in $\bF_p$ times~$y$.
\begin{proposition} \label{prop:thhz}
Consider the case $A = \ell$, $M = D(v)$ and $\AmodmodM \simeq H\bZ_{(p)}$
of the spectral sequence~\eqref{eq:thhamgt0ss} above. It is an algebra spectral sequence
\begin{align*}
E^2_{**} &= \Tor^{H_\circledast (B^{\cy} (D(v))_{h\cJ})}_{**}
	(V(1)_* \THH(\ell), H_\circledast (B^{\cy}(D(v))_{h\cJ_{0}})) \\
	&\Longrightarrow V(1)_* \THH(\bZ_{(p)}) \,. 
\end{align*}
With $C_* = H_*(B^{\cy}(D(v))_{h\cJ_{0}}) $ as in~\eqref{eq:def-C-HBcyDx0} we have 
\begin{align*}
H_\circledast (B^{\cy}(D(v))_{h\cJ}) &\cong P(v) \otimes E(dv) \otimes C_* \\
V(1)_* \THH(\ell) &\cong E(\lambda_1, \lambda_2) \otimes P(\mu_2) \\
V(1)_* \THH(\bZ_{(p)}) &\cong E(\epsilon_1, \lambda_1) \otimes P(\mu_1) \,,
\end{align*}
with $|v| = 2p-2$, $|dv| = 2p-1$, $|\lambda_1| = 2p-1$, $|\lambda_2|
= 2p^2-1$, $|\mu_2| = 2p^2$, $|\epsilon_1| = 2p-1$ and $|\mu_1| = 2p$.
Here
\[
E^2_{**} \cong E(\lambda_1, \lambda_2) \otimes P(\mu_2)
	\otimes E([v]) \otimes \Gamma([dv])
\]
where $[v]$ has bidegree~$(1,2p-2)$ and $[dv]$ has bidegree~$(1,2p-1)$.
There are nontrivial $d^p$-differentials
\[
d^p(\gamma_k[dv]) \doteq \lambda_2 \cdot \gamma_{k-p}[dv]
\]
for all $k\ge p$, leaving
\[
E^\infty_{**} \cong E(\lambda_1) \otimes P(\mu_2) \otimes E([v])
	\otimes P_p([dv]) \,.
\]
Hence $[v]$ represents $\epsilon_1$ (modulo~$\lambda_1$), $[dv]$
represents $\mu_1$ and $\mu_2$ represents $\mu_1^p$ (up to units in
$\bF_p$) in the abutment, and there is a multiplicative extension $[dv]^p
\doteq \mu_2$.
\end{proposition}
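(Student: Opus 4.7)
The plan is to realize the spectral sequence as a K\"unneth (Tor) spectral sequence for the smash product presentation
\[
\THH(H\bZ_{(p)}) \simeq \THH(\ell) \wedge_{\bS^{\cJ}[B^{\cy}(D(v))]} \bS^{\cJ}[B^{\cy}(D(v)_{\{0\}})],
\]
and then pass to $V(1)$-homotopy by smashing with $H$ over $\ell$. The algebras and modules appearing in the $E^2$-term are supplied by Proposition~\ref{prop:homology-B^cyDx}, while the input $V(1)_*\THH(\ell) \iso E(\lambda_1, \lambda_2) \otimes P(\mu_2)$ comes from McClure--Staffeldt~\cite{McClure-S_thh-bu} and the abutment $V(1)_*\THH(H\bZ_{(p)}) \iso E(\epsilon_1, \lambda_1) \otimes P(\mu_1)$ is B\"okstedt's computation.

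To identify the $E^2$-term, I would first observe that $v$ maps to zero in $V(1)_*\ell = \bF_p$ via the unit, so both $v$ and $dv = \sigma(v)$ act trivially on $V(1)_*\THH(\ell)$. A Koszul resolution of $\bF_p$ over $P(v) \otimes E(dv)$, tensored with $C_*$, gives a free resolution of $C_*$ over the full ring $P(v) \otimes E(dv) \otimes C_*$. The change-of-rings identity
\[
\Tor^{P(v) \otimes E(dv) \otimes C_*}_{**}(V(1)_*\THH(\ell), C_*) \iso V(1)_*\THH(\ell) \otimes_{\bF_p} \Tor^{P(v) \otimes E(dv)}_{**}(\bF_p, \bF_p)
\]
follows from the computation $(K_\bullet \otimes C_*) \otimes_{P(v) \otimes E(dv) \otimes C_*} V(1)_*\THH(\ell) \iso K_\bullet \otimes_{P(v) \otimes E(dv)} V(1)_*\THH(\ell)$, and since $v$, $dv$ act as zero this reduces to $V(1)_*\THH(\ell) \otimes E([v]) \otimes \Gamma([dv])$, matching the claimed form.

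The $d^p$-differentials are forced by comparison with the abutment. Since $|\lambda_2|=2p^2-1$ does not appear in $V(1)_*\THH(H\bZ_{(p)})$, the class $\lambda_2$ must die; a bidegree check shows that the only candidate source is $\gamma_p[dv]$ in bidegree $(p, 2p^2-p)$, so that $d^p(\gamma_p[dv])$ lands in the correct bidegree $(0,2p^2-1)$. Multiplicativity and the divided power identity $\gamma_p[dv] \cdot \gamma_{k-p}[dv] = \binom{k}{p} \gamma_k[dv]$ together with the Leibniz rule then propagate this to $d^p(\gamma_k[dv]) \doteq \lambda_2 \cdot \gamma_{k-p}[dv]$ for all $k \geq p$, using that $\binom{k}{p} \not\equiv 0 \pmod{p}$ in the range $p \le k < 2p$ (by Lucas) and iterating beyond. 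A Poincar\'e series count of the resulting $E^{p+1}$-term against the abutment rules out further differentials and gives $E^{\infty} \iso E(\lambda_1) \otimes P(\mu_2) \otimes E([v]) \otimes P_p([dv])$.

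The multiplicative extensions are read off by matching surviving generators to the abutment in their respective total degrees: $[v]$ in total degree $2p-1$ represents $\epsilon_1$ modulo the already present $\lambda_1$, while $[dv]$ in total degree $2p$ represents $\mu_1$. Because $[dv]^p = p!\,\gamma_p[dv] = 0$ in $E^\infty$ whereas $\mu_1^p$ is a nonzero generator of the $P(\mu_1)$-factor in the abutment, the unique available representative in the correct degree forces the hidden extension $[dv]^p \doteq \mu_2$. The hard part will be cleanly justifying the multiplicative propagation of the $d^p$-differential through the divided power algebra and ruling out alternative differential patterns; the existence of a single $d^p$ killing $\lambda_2$ is immediate from degree considerations, but making the rigidity of the whole pattern precise requires careful bookkeeping with bidegrees, the Leibniz rule, and the divided power structure.
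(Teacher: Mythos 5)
Your proposal follows essentially the same route as the paper: realize the spectral sequence as a K\"unneth/Tor spectral sequence, use B\"okstedt and McClure--Staffeldt for the target and the middle term, apply change-of-rings to compute $E^2$, and then hunt for differentials by comparing total degrees against the abutment. The structural outline is sound, but two steps contain genuine gaps.

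First, the claim that ``$|\lambda_2|=2p^2-1$ does not appear in $V(1)_*\THH(H\bZ_{(p)})$'' is false: the abutment $E(\epsilon_1,\lambda_1)\otimes P(\mu_1)$ has a two-dimensional piece in degree $2p^2-1$, spanned by $\epsilon_1\mu_1^{p-1}$ and $\lambda_1\mu_1^{p-1}$. So you cannot conclude that $\lambda_2$ dies merely because the abutment vanishes there. What is true is that the $E^2$-term in this total degree is three-dimensional (spanned by $\lambda_2$, $\lambda_1\cdot\gamma_{p-1}[dv]$ and $[v]\cdot\gamma_{p-1}[dv]$), so at least one class must be a boundary; and since the latter two are products of infinite cycles representing nonzero products in the abutment, $\lambda_2$ is the one forced to die. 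You need to include this step: identify which classes survive, not just count dimensions. Relatedly, your justification that $P(v)\otimes E(dv)$ acts trivially (``$v$ maps to zero in $V(1)_*\ell$'') is not quite on point --- what one must check is that $v$ and $dv$ map to zero under $H_*\bS^\cJ[B^{\cy}(D(v))]\to H_*\THH(\ell)\to V(1)_*\THH(\ell)$; for $v$ this follows because its Hurewicz image has Adams filtration $\ge 1$, and for $dv=\sigma(v)$ by compatibility with the circle action. The degree-$(2p-1)$ case is in fact not settled by degree alone, since $\lambda_1$ lives there.

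Second, multiplicativity and the Leibniz rule do \emph{not} propagate the $d^p$-differential to all $k\ge p$. The identity $\gamma_p[dv]\cdot\gamma_{k-p}[dv]=\binom{k}{p}\gamma_k[dv]$ has nonzero coefficient (by Lucas) precisely when $p\le k<p^2$, so the Leibniz rule determines $d^p(\gamma_k[dv])$ in that range. But $\binom{p^2}{p}\equiv 0\pmod p$, and more generally $\gamma_{p^i}[dv]$ for $i\ge 1$ are the polynomial algebra generators of $\Gamma([dv])\cong\bigotimes_{i\ge0}P_p(\gamma_{p^i}[dv])$, so the differentials on $\gamma_{p^i}[dv]$ for $i\ge 2$ cannot be deduced from lower ones by multiplicativity alone. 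Your final Poincar\'e-series comparison cannot repair this, because before you know the differential on $\gamma_{p^2}[dv]$ you do not yet know what $E^{p+1}$ is. The paper resolves this with an induction: in total degree $2p^{i+1}-1$, after the differentials in lower degrees are known, a dimension count again leaves exactly one candidate differential, namely $d^p(\gamma_{p^i}[dv])\doteq\lambda_2\cdot\gamma_{p^i-p}[dv]$. You should add that inductive degree-by-degree argument.
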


\begin{proof}
  Building on the graded Thom isomorphism, Proposition~\ref{prop:homology-B^cyDx} provides isomorphisms 
$H_* \bS^\cJ[B^{\cy}_{\{0\}}(D(v))]
  \cong H_* B^{\cy}(D(v)_{\{0\}})_{h\cJ} = C_*$ and $H_* \bS^\cJ[B^{\cy}(D(v))] \cong H_\circledast (B^{\cy} (D(v))_{h\cJ}) \cong P(v) \otimes E(dv)
  \otimes C_*$.

   B{\"o}kstedt computed $\pi_*(S/p \wedge\THH(\bZ))
   \cong \pi_*(S/p \wedge\THH(\bZ_{(p)})) = E(\lambda_1) \otimes
   P(\mu_1)$, so we have $V(1)_* \THH(\bZ_{(p)}) = E(\epsilon_1,
   \lambda_1) \otimes P(\mu_1)$, where $\epsilon_1$ is a mod~$v_1$
   Bockstein element in degree~$2p-1$.  McClure and Staffeldt
   \cite{McClure-S_thh-bu} computed that $V(1)_* \THH(\ell) =
   E(\lambda_1, \lambda_2) \otimes P(\mu_2)$.  See
   \cite{Ausoni-R_K-Morava}*{\S3, \S4} for further details.

This leads to the $E^2$-term
\begin{align*}
E^2_{**} &= \Tor^{P(v) \otimes E(dv) \otimes C_*}_{**}
	(E(\lambda_1, \lambda_2) \otimes P(\mu_2), C_*) \\
&\cong \Tor^{P(v) \otimes E(dv)}_{**}
	(E(\lambda_1, \lambda_2) \otimes P(\mu_2), \bF_p) \\
&\cong E(\lambda_1, \lambda_2) \otimes P(\mu_2)
	\otimes E([v]) \otimes \Gamma([dv]) \,,
\end{align*}
where we have used change-of-rings and the fact that $P(v) \otimes E(dv)$
acts trivially on $E(\lambda_1, \lambda_2) \otimes P(\mu_2)$.
To verify the last assertion, we use the factorization \[H_* \bS^\cJ[B^{\cy}(D(v))]
\to H_* \THH(\ell) \to H^\ell_*(\THH(\ell))  \cong V(1)_* \THH(\ell)\,.\]
The first homomorphism extends $H_* \bS^\cJ[D(v)] \to H_* \ell$, hence
takes $v$ to $0$ since~$v\in \pi_{2p-2}(\ell)$ has Adams filtration~$1$, and takes $dv$
to $0$ by compatibility with the suspension operator coming from the
circle action.

The algebra generators $\lambda_1$, $\lambda_2$, $\mu_2$, $[v]$ and
$[dv]$ must be infinite cycles for filtration reasons.  To determine
the differentials on the remaining algebra generators, namely the
divided powers $\gamma_{p^i}[dv]$ for $i\ge1$, we note that the abutment
$E(\epsilon_1, \lambda_1) \otimes P(\mu_1)$ has at most two generators
in each degree.  In total degree~$2p^2-1$ the $E^2$-term is generated by
the three classes $\lambda_2$, $\lambda_1 \cdot \gamma_{p-1}[dv]$ and
$[v] \cdot \gamma_{p-1}[dv]$.  Hence one of these must be a boundary,
and for filtration reasons the only possibility is $d^p(\gamma_p[dv])
\doteq \lambda_2$.

This implies that $\lambda_1$, $[v]$, $[dv]$ and $\mu_2$ survive to the
$E^\infty$-term, where they must represent $\lambda_1$, $\epsilon_1$
(modulo $\lambda_1$), $\mu_1$ and $\mu_1^p$, respectively (up to units
in $\bF_p$).  It follows that each generating monomial in $E(\lambda_1)
\otimes P(\mu_2) \otimes E([v]) \otimes P_p([dv])$ is an infinite cycle
that represents a nonzero product in the abutment, so these classes
cannot be boundaries.

Now consider total degree~$2p^3-1$.  After the differential on
$\gamma_p[dv]$, only the three generators
\[
\lambda_1 \cdot \mu_2^{p-1} \cdot \gamma_{p-1}[dv]
\quad\text{,}\quad
\mu_2^{p-1} \cdot [v] \cdot \gamma_{p-1}[dv]
\quad\text{and}\quad
\lambda_2 \cdot \gamma_{p^2-p}[dv]
\]
remain.  As we have just noticed, the first two monomials cannot be hit
by differentials.  Since only two generators can survive in this
degree, and the only possible source (or target) of a differential is $\gamma_{p^2}[dv]$,
we must have $d^p(\gamma_{p^2}[dv]) \doteq \lambda_2 \cdot \gamma_{p^2-p}[dv]$.
By induction, the corresponding argument in degree~$2p^{i+1}-1$
establishes the nontrivial differential on $\gamma_{p^i}[dv]$, for each
$i\ge2$.
\end{proof}
Analogously to the homotopy cocartesian square leading
to~\eqref{eq:thhamgt0ss}, the smash product defining $\THH(A, M)$ gives
rise to a homotopy cocartesian square
\[\xymatrix@-1pc{
H \wedge \bS^\cJ[B^{\cy}(M)] \ar[rr] \ar[d]
	&& H \wedge \bS^\cJ[B^{\rep}(M)] \ar[d] \\
H \wedge_A \THH(A) \ar[rr]
	&& H \wedge_A \THH(A, M)
}
\]
of commutative $H$-algebras, and an associated $\Tor$ spectral sequence
\begin{equation} \label{eq:logthhamss}
\begin{aligned}
E^2_{**} &= \Tor^{H_* \bS^\cJ[B^{\cy}(M)]}_{**}
        (H^A_* \THH(A), H_* \bS^\cJ[B^{\rep}(M)]) \\
&\Longrightarrow H^A_* \THH(A, M) \,.
\end{aligned}
\end{equation}

\begin{theorem} \label{thm:thhelldv}
Consider the case $A = \ell$ and $M = D(v)$ of the spectral
sequence~\eqref{eq:logthhamss} above.  It is an algebra spectral
sequence
\begin{align*}
E^2_{**} &= \Tor^{H_\circledast (B^{\cy} (D(v))_{h\cJ})}_{**}
	(V(1)_* \THH(\ell), H_\circledast (B^{\rep}(D(v))_{h\cJ})) \\
	&\Longrightarrow V(1)_* \THH(\ell, D(v)) \,,
\end{align*}
where
\begin{align*}
H_\circledast (B^{\cy}(D(v))_{h\cJ}) &= P(v) \otimes E(dv) \otimes C_* \\
H_\circledast (B^{\rep}(D(v))_{h\cJ}) &= P(v) \otimes E(d\log v) \otimes C_* \\
V(1)_* \THH(\ell) &= E(\lambda_1, \lambda_2) \otimes P(\mu_2) \,,
\end{align*}
with $|d\log v| = 1$ and the remaining degrees as above.  Here
\[
E^2_{**} = E(\lambda_1, \lambda_2) \otimes P(\mu_2)
	\otimes E(d\log v) \otimes \Gamma([dv])
\]
where $[dv]$ has bidegree~$(1,2p-1)$.  There are nontrivial differentials
\[
d^p(\gamma_k[dv]) \doteq \lambda_2 \cdot \gamma_{k-p}[dv]
\]
for all $k\ge p$, leaving
\[
E^\infty_{**} = E(\lambda_1) \otimes P(\mu_2) \otimes E(d\log v)
	\otimes P_p([dv]) \,.
\]
There is a multiplicative extension $[dv]^p \doteq \mu_2$, so the abutment
is
\[
V(1)_* \THH(\ell, D(v)) = E(\lambda_1, d\log v) \otimes P(\kappa_1) \,,
\]
where $\kappa_1$ is represented by $[dv]$ in degree~$2p$.
\end{theorem}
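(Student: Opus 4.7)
The plan is to construct the K\"unneth algebra spectral sequence arising from the pushout defining $\THH(\ell, D(v))$, compute its $E^2$-page by change of rings to a small Tor calculation, and transport the $d^p$-differentials and the $p$-th power extension from Proposition~\ref{prop:thhz} by naturality along the chain of $\Tor$ spectral sequences displayed just before the statement. The input algebras are supplied by Proposition~\ref{prop:homology-B^cyDx} and by the McClure--Staffeldt computation of $V(1)_* \THH(\ell)$.

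For the $E^2$-page, the argument in the proof of Proposition~\ref{prop:thhz} shows that $P(v) \otimes E(dv)$ acts trivially on $V(1)_*\THH(\ell)$: the class $v$ maps to $0$ in $V(1)_*\ell = \bF_p$, and then $dv = \sigma(v) = 0$ by the derivation property of $\sigma$. Because $H_\circledast(B^{\rep}(D(v))_{h\cJ}) \cong P(v) \otimes E(d\log v) \otimes C_*$ is $C_*$-free and $C_*$ acts on $V(1)_*\THH(\ell)$ through its $\bF_p$-augmentation, a K\"unneth isomorphism for $\Tor$ cancels the $C_*$-factor and separates $V(1)_*\THH(\ell)$, giving
\[
E^2_{**} \cong V(1)_*\THH(\ell) \otimes_{\bF_p} \Tor_{**}^{P(v) \otimes E(dv)}(\bF_p, P(v) \otimes E(d\log v)).
\]
I would then compute the inner $\Tor$ from the short exact sequence of $P(v) \otimes E(dv)$-modules
\[
0 \to P(v) \otimes E(dv) \to P(v) \otimes E(d\log v) \to \Sigma \bF_p \to 0,
\]
where the injection sends $v \mapsto v$ and $dv \mapsto v \cdot d\log v$, and the cokernel is trivially generated by $d\log v$. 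The induced long exact sequence, combined with $\Tor^{P(v) \otimes E(dv)}(\bF_p, \bF_p) \cong E([v]) \otimes \Gamma([dv])$, identifies the inner $\Tor$ additively with $E(d\log v) \otimes \Gamma([dv])$; the divided-power algebra structure is inherited from the multiplicativity of the K\"unneth spectral sequence for commutative ring spectra.

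For the differentials and the extension I would invoke naturality along the displayed chain of spectral sequences. Proposition~\ref{prop:thhz} gives $d^p(\gamma_k[dv]) \doteq \lambda_2 \cdot \gamma_{k-p}[dv]$ for $k \ge p$ together with $[dv]^p \doteq \mu_2$ in the right-hand spectral sequence, and the inclusion $C_* \hookrightarrow E(d\log v) \otimes C_*$ carries these relations into the middle spectral sequence. On the left, the class $[dv]$ has a Koszul chain-level representative involving both the $v$- and $dv$-Koszul generators, and a divided-power computation shows that the left-to-middle comparison sends $\gamma_p[dv]$ to a linear combination containing the middle-SS class $\gamma_p[dv]$ with nonzero coefficient, plus decomposable terms whose $d^p$ vanishes by Leibniz. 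Naturality of the spectral sequence maps then forces $d^p(\gamma_p[dv]) \doteq \lambda_2$ on the left, and analogous arguments on the higher algebra generators $\gamma_{p^i}[dv]$ (combined with Leibniz and dimension counts as in Proposition~\ref{prop:thhz}) give the remaining differentials. The classes $\lambda_1$, $\mu_2$, $d\log v$, and $[dv]$ are permanent cycles for bidegree reasons. Finally, the extension $[dv]^p \doteq \mu_2$ transports along the same chain, and combining this with $E^\infty \cong E(\lambda_1) \otimes P(\mu_2) \otimes E(d\log v) \otimes P_p([dv])$ yields the stated abutment $V(1)_* \THH(\ell, D(v)) \cong E(\lambda_1, d\log v) \otimes P(\kappa_1)$ with $\kappa_1 = [dv]$. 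The hardest step will be this chain-level tracking of $\gamma_p[dv]$ through the left-to-middle comparison: the transported $d^p$-differential is nonzero only because the image of $\gamma_p[dv]$ in the middle spectral sequence has a nonzero component along the right-imported generator, which requires a careful computation with the divided-power structure at the chain level.
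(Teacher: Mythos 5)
Your proposal follows the same overall strategy as the paper: construct the three-term chain of $\Tor$ spectral sequences based on~\eqref{eq:THHAM-THHAmommodM}, compute the $E^2$-terms by change of rings, and transport the $d^p$-differentials and the $p$-th power extension from Proposition~\ref{prop:thhz} by naturality. The $E^2$ computation via the short exact sequence $0 \to P(v)\otimes E(dv) \to P(v)\otimes E(d\log v) \to \Sigma\bF_p \to 0$ and its long exact sequence is valid, though less direct than the paper's change-of-rings along $P(v)\otimes E(dv) \twoheadrightarrow E(dv)$ (which works because $P(v)\otimes E(d\log v)$ is $P(v)$-free).

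The genuine issue is in your handling of the $d^p$-differentials. You propose a chain-level Koszul computation to track how $\gamma_p[dv]$ maps from the left to the middle $E^2$-term, and you claim the decomposable correction terms have $d^p = 0$ by Leibniz. That claim is true for $\gamma_p[dv]$ (where the correction term is $\pm[v]\gamma_{p-1}[dv]\cdot d\log v$ and $\gamma_{p-1}[dv]$ is a $d^p$-cycle), but it fails already for $\gamma_{p^2}[dv]$: the correction $\pm[v]\gamma_{p^2-1}[dv]\cdot d\log v$ has $d^p \doteq [v]\lambda_2\gamma_{p^2-1-p}[dv]\cdot d\log v \neq 0$. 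So the ``analogous argument'' you plan to run on the higher generators breaks. Your fallback — ``dimension counts as in Proposition~\ref{prop:thhz}'' — also does not transfer: that count relied on knowing the abutment $V(1)_*\THH(\bZ_{(p)})$ in advance, but the abutment of the left spectral sequence is $V(1)_*\THH(\ell,D(v))$, precisely the thing being computed, so such a count would be circular.

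Moreover, this entire chain-level detour is unnecessary. Since the map of $E^2$-terms from left to middle is injective and both sides have no differentials before $d^p$, the map remains injective on $E^p$. Being a map of spectral sequences it commutes with $d^p$, so the image of the left $E^p$ in the middle is automatically closed under $d^p$ and the left $d^p$ is the restriction of the middle one; no chain-level control of the comparison map is needed. This is the content of the paper's remark ``since the left hand arrow of $E^2$-terms is also injective, it follows that the differentials and multiplicative extensions lift,'' and it transports $d^p(\gamma_k[dv]) \doteq \lambda_2\cdot\gamma_{k-p}[dv]$ for \emph{all} $k\ge p$ simultaneously. If you replace your chain-level tracking by this observation, your proof becomes correct and essentially coincides with the paper's.
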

Together with the stable equivalence $\THH(\ell, D(v)) \to \THH(\ell,
j_*\!\GLoneJof(L))$ from Proposition~\ref{prop:logification-of-D-of-x},
and Proposition~\ref{prop:circle-operator-values} below, the previous
theorem implies Theorem~\ref{thm:V(1)THHellGLoneJofL-intro} from the
introduction. Note that $\kappa_1 \in V(1)_{2p} \THH(\ell, D(v))$ is
only defined modulo~$\lambda_1 \cdot d\log v$.
\begin{proof}[Proof of Theorem~\ref{thm:thhelldv}]
  Recall the chain~\eqref{eq:THHAM-THHAmommodM}. We apply the same
  cobase changes as before, and get a chain of three $\Tor$ spectral
  sequences with $E^2$-terms
\begin{multline*}
\Tor^{P(v) \otimes E(dv) \otimes C_*}_{**}
	(E(\lambda_1, \lambda_2) \otimes P(\mu_2),
	P(v) \otimes E(d\log v) \otimes C_*) \\
\longto
\Tor^{P(v) \otimes E(dv) \otimes C_*}_{**}
	(E(\lambda_1, \lambda_2) \otimes P(\mu_2),
	E(d\log v) \otimes C_*) \\
\longleftarrow
\Tor^{P(v) \otimes E(dv) \otimes C_*}_{**}
	(E(\lambda_1, \lambda_2) \otimes P(\mu_2),
	C_*) \,,
\end{multline*}
which by change-of-rings is isomorphic to the chain
\begin{multline*}
\Tor^{E(dv)}_{**}
	(E(\lambda_1, \lambda_2) \otimes P(\mu_2), E(d\log v)) \\
\longto
\Tor^{P(v) \otimes E(dv)}_{**}
	(E(\lambda_1, \lambda_2) \otimes P(\mu_2), E(d\log v)) \\
\longleftarrow
\Tor^{P(v) \otimes E(dv)}_{**}
	(E(\lambda_1, \lambda_2) \otimes P(\mu_2), \bF_p) \,,
\end{multline*}
hence takes the form
\begin{multline*}
E(\lambda_1, \lambda_2) \otimes P(\mu_2) \otimes E(d\log v)
	\otimes \Gamma([dv]) \\
\longto
E(\lambda_1, \lambda_2) \otimes P(\mu_2) \otimes E(d\log v)
	\otimes E([v]) \otimes \Gamma([dv]) \\
\longleftarrow
E(\lambda_1, \lambda_2) \otimes P(\mu_2)
	\otimes E([v]) \otimes \Gamma([dv]) \,,
\end{multline*}
and converges to the chain
\begin{multline*}
V(1)_* \THH(\ell, D(v)) \\
	\longto V(1)_*(\THH(\ell)
	\wedge_{\bS^\cJ[B^{\cy}(D(v))]} \bS^\cJ[B^{\cy}_{\{0\}}(D(v)^{\gp})]) \\
	\longleftarrow V(1)_* \THH(\bZ_{(p)}) \,.
\end{multline*}

The known differentials $d^p(\gamma_k[dv]) \doteq \lambda_2 \cdot
\gamma_{k-p}[dv]$ in the right hand spectral sequence (which is that
of Proposition~\ref{prop:thhz}) remain nonzero in the middle spectral
sequence, since the right hand arrow of $E^2$-terms is injective and
$d\log v$ is an infinite cycle for filtration reasons.  Likewise the
multiplicative extension $[dv]^p \doteq \mu_2$ carries over to the
middle.  Furthermore, since the left hand arrow of $E^2$-terms is also
injective, it follows that the differentials and multiplicative
extensions lift to the left hand spectral sequence.

This implies that the asserted $d^p$-differentials are the first nonzero
differentials in the left hand spectral sequence, which is the spectral
sequence in the statement of the theorem.  This leaves the
$E^{p+1}$-term
\[
E^{p+1}_{**} = E(\lambda_1) \otimes P(\mu_2) \otimes E(d\log v)
	\otimes P_p([dv]) \,,
\]
which must be equal to the $E^\infty$-term for filtration reasons.
Letting $\kappa_1$ in degree~$2p$ be a class in the abutment represented
by $[dv]$ in bidegree~$(1,2p-1)$, we find that $\kappa_1^p \doteq \mu_2$,
leading to the asserted algebraic structure of the abutment.
\end{proof}

By Theorem~\ref{thm:localization-seq-for-Dx} we have a homotopy
cofiber sequence
\[
\THH(\ell) \overset{\rho}\longto
\THH(\ell, D(v)) \overset{\partial}\longto
\Sigma \THH(\bZ_{(p)})
\]
of $\THH(\ell)$-modules, where $\rho$ is a map of commutative
symmetric ring spectra.  Let $\tau \: \THH(\bZ_{(p)}) \to \THH(\ell)$
denote the homotopy fiber map of $\rho$, which we like to think of as a kind of
transfer map.

\begin{lemma} \label{lem:v1exseqelldv}
There is a long exact sequence
\begin{multline*}
\dots \longto V(1)_* \THH(\bZ_{(p)}) \overset{\tau_*}\longto
	V(1)_* \THH(\ell) \\
\overset{\rho_*}\longto V(1)_* \THH(\ell, D(v))
	\overset{\partial_*}\longto V(1)_{*-1} \THH(\bZ_{(p)})
	\longto \dots
\end{multline*}
of $V(1)_* \THH(\ell)$-modules, where $\rho_*$ is an algebra homomorphism,
and
\begin{enumerate}
\item
$\tau_*(\epsilon_1 \mu_1^{p-1}) \doteq \lambda_2$,
\item
$\rho_*(\lambda_1) = \lambda_1$,
\item
$\rho_*(\mu_2) \doteq \kappa_1^p$,
\item
$\partial_*(d\log v \cdot \kappa_1^k) \doteq \mu_1^k$ for $k\ge0$ and
\item
$\partial_*(\kappa_1^k) \doteq \epsilon_1 \mu_1^{k-1}
\pmod{\lambda_1 \mu_1^{k-1}}$ for $p\nmid k\ge1$.
\end{enumerate}
\end{lemma}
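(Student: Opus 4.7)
The plan is to extract the long exact sequence directly from the cofiber sequence in Theorem~\ref{thm:localization-seq-for-Dx} by applying $V(1)_*$. Since all three maps in that sequence are $\THH(\ell)$-module maps, the induced long exact sequence is $V(1)_*\THH(\ell)$-linear; the action on $V(1)_*\THH(\bZ_{(p)})$ is via the ring map $\ell \to \bZ_{(p)}$, which in particular sends $\lambda_1 \mapsto \lambda_1$. Since $\rho$ is a map of commutative ring spectra, $\rho_*$ is an algebra homomorphism.

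Assertions (2) and (3) can be read off directly from the spectral sequence of Theorem~\ref{thm:thhelldv}: $\lambda_1 \in V(1)_*\THH(\ell)$ is a permanent cycle at filtration $0$ representing the named class $\lambda_1$ in the abutment, while $\mu_2$ is represented by $\mu_2$ at filtration $0$, which by the multiplicative extension $[dv]^p \doteq \mu_2$ coincides with $\kappa_1^p$ in the abutment.

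For (1), the differential $d^p(\gamma_p[dv]) \doteq \lambda_2$ in the same spectral sequence shows that $\lambda_2$ is killed, so $\rho_*(\lambda_2) = 0$ and by exactness $\lambda_2 = \tau_*(x)$ for some $x \in V(1)_{2p^2-1}\THH(\bZ_{(p)}) = \bF_p\{\epsilon_1 \mu_1^{p-1},\, \lambda_1 \mu_1^{p-1}\}$. An inspection of generator degrees gives $V(1)_{2p^2-2p}\THH(\ell) = 0$, so $\tau_*(\mu_1^{p-1}) = 0$ and hence $\tau_*(\lambda_1 \mu_1^{p-1}) = \lambda_1 \cdot \tau_*(\mu_1^{p-1}) = 0$ by $V(1)_*\THH(\ell)$-linearity. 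This forces $\lambda_2 \doteq \tau_*(\epsilon_1 \mu_1^{p-1})$.

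For (4) and (5), I would use (2), (3) and exactness to identify the image of $\rho_*$ in $V(1)_*\THH(\ell, D(v)) = E(\lambda_1, d\log v) \otimes P(\kappa_1)$ as the subalgebra $E(\lambda_1) \otimes P(\kappa_1^p)$, so that $\partial_*$ descends to an injection from the complementary $\bF_p$-quotient, a basis of which is given by those monomials $\lambda_1^a(d\log v)^b \kappa_1^c$ with either $b=1$, or $b=0$ and $p\nmid c \geq 1$. In degree $2pk+1$ this quotient is spanned by $d\log v \cdot \kappa_1^k$, and the target $V(1)_{2pk}\THH(\bZ_{(p)}) = \bF_p\{\mu_1^k\}$ is one-dimensional, yielding (4). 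In degree $2pk$ for $p\nmid k \geq 1$, the quotient has basis $\{\kappa_1^k,\ \lambda_1 \cdot d\log v \cdot \kappa_1^{k-1}\}$, mapping into the two-dimensional $V(1)_{2pk-1}\THH(\bZ_{(p)}) = \bF_p\{\epsilon_1 \mu_1^{k-1},\, \lambda_1 \mu_1^{k-1}\}$. By $V(1)_*\THH(\ell)$-linearity and (4), $\partial_*(\lambda_1 \cdot d\log v \cdot \kappa_1^{k-1}) \doteq \lambda_1 \mu_1^{k-1}$, so injectivity forces $\partial_*(\kappa_1^k)$ to have nonzero $\epsilon_1 \mu_1^{k-1}$-coefficient, giving (5). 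The main obstacle is the careful bookkeeping of generators and degrees; once the module structure and exactness are in place, each item reduces to a finite-dimensional linear algebra argument in the appropriate degree.
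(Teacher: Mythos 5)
Your proposal is correct and takes essentially the same approach as the paper: extract the long exact sequence of $V(1)_*\THH(\ell)$-modules from the cofiber sequence of Theorem~\ref{thm:localization-seq-for-Dx}, read off $\rho_*$ from the edge homomorphism of the $\Tor$ spectral sequence in Theorem~\ref{thm:thhelldv}, and then pin down $\tau_*$ and $\partial_*$ by exactness, module-linearity, and degree counts. The only cosmetic difference is in item~(1), where you show $\tau_*(\lambda_1\mu_1^{p-1})=0$ directly via $V(1)_{2p^2-2p}\THH(\ell)=0$, whereas the paper instead observes that $\epsilon_1\lambda_1\mu_1^{p-1}$ is the unique class in its degree that can hit $\lambda_1\lambda_2$.
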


\begin{proof}
The repletion homomorphism
\[
\rho_* \: E(\lambda_1, \lambda_2) \otimes P(\mu_2)
\longto E(\lambda_1, d\log v) \otimes P(\kappa_1)
\]
is induced by the canonical $H$-algebra map
\[
H \wedge_\ell \THH(\ell) \longto H \wedge_\ell \THH(\ell, D(v)) \,,
\]
and therefore factors through the edge homomorphism of the
spectral sequence in Theorem~\ref{thm:thhelldv}. Hence it is an
algebra homomorphism satisfying $\rho_*(\lambda_1) = \lambda_1$,
$\rho_*(\lambda_2) = 0$ and $\rho_*(\mu_2) \doteq \kappa_1^p$.

It follows that
\[
\ker(\rho_*) = \im(\tau_*) = E(\lambda_1) \otimes P(\mu_2) \{\lambda_2\}
\]
is the free $E(\lambda_1) \otimes P(\mu_2)$-submodule of $V(1)_*
\THH(\ell)$ generated by $\lambda_2$.  Only (a unit in $\bF_p$ times)
$\epsilon_1 \lambda_1 \mu_1^{p-1}$ can map under $\tau_*$ to $\lambda_1
\lambda_2$, so $\tau_*(\epsilon_1 \mu_1^{p-1}) \doteq \lambda_2$.
Likewise
\[
\im(\rho_*) = \ker(\partial_*) = E(\lambda_1) \otimes P(\kappa_1^p)
\]
is the free $E(\lambda_1) \otimes P(\mu_2)$-submodule of $V(1)_*
\THH(\ell, D(v))$ generated by $1$.
The classes $d\log v \cdot \kappa_1^k$ can only map non-trivially under
$\partial_*$ to (units in $\bF_p$ times) $\mu_1^k$, so $\partial_*(d\log v
\cdot \kappa_1^k) \doteq \mu_1^k$ for $k\ge0$.  The classes $\kappa_1^k$
with $p\nmid k\ge1$ must map to classes in the span of $\epsilon_1
\mu_1^{k-1}$ and $\lambda_1 \mu_1^{k-1}$ that are linearly independent
of \[\partial_*(\lambda_1 \cdot d\log v \cdot \kappa_1^{k-1}) \doteq
\lambda_1 \mu_1^{k-1},\] hence must map to (a unit in $\bF_p$ times)
$\epsilon_1 \mu_1^{k-1}$ modulo a multiple of $\lambda_1 \mu_1^{k-1}$.
\end{proof}

\begin{lemma} \label{lem:prelogalpha}
The adjoint pre-log structure map
\[
\bar\alpha \: \bS^\cJ[B^{\rep} D(v)] \longto \THH(\ell, D(v))
\]
induces an algebra homomorphism
\[
\bar\alpha_* \: H_\circledast(B^{\rep}(D(v))_{h\cJ})
	\longto V(1)_* \THH(\ell, D(v))
\]
satisfying $\bar\alpha_*(v) = 0$ and $\bar\alpha_*(d\log v) = d\log v$.
\end{lemma}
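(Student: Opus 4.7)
The plan is to derive all three assertions directly from the construction of $\bar\alpha$ as one of the two structural ring maps into the pushout defining $\THH(\ell, D(v))$, together with the K\"unneth spectral sequence built in the proof of Theorem~\ref{thm:thhelldv}. The underlying principle is that $\bar\alpha$ is a ring spectrum map, so all desired compatibilities amount to naturality statements.

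For the algebra homomorphism claim, I would observe that $\bar\alpha$ is a map of commutative symmetric ring spectra, being a coproduct inclusion into a pushout in $\cC\Spsym$. Smashing with $H$ and composing with the canonical $H$-algebra map $H \sm Y \to H \sm_\ell Y$ for the $\ell$-algebra $Y = \THH(\ell, D(v))$ produces a map of $H$-algebras. Passing to homotopy, identifying the source with $H_\circledast(B^{\rep}(D(v))_{h\cJ})$ via the graded Thom isomorphism (Proposition~\ref{prop:thom-iso-homology}), and using $V(1)_*Y = H^\ell_*Y$ for the $\ell$-algebra $Y$, yields the asserted algebra homomorphism. The $p=3$ case works identically since that is exactly how the algebra structure on $V(1)_*Y$ was defined in Notation~\ref{nota:Smith-Toda-complex}.

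For $\bar\alpha_*(v) = 0$, I would trace $v$ back along $D(v) \hookrightarrow B^{\cy}(D(v)) \xrightarrow{\rho} B^{\rep}(D(v))$. Commutativity of the pushout square defining $\THH(\ell, D(v))$ identifies the composite $\bS^\cJ[D(v)] \to \bS^\cJ[B^{\rep} D(v)] \xrightarrow{\bar\alpha} \THH(\ell, D(v))$ with $\bS^\cJ[D(v)] \to \ell \to \THH(\ell) \to \THH(\ell, D(v))$, where the first map is the adjoint of the pre-log structure map. Since $v \in \pi_{2p-2}(\ell)$ has Adams filtration~$1$, as already invoked in the proof of Proposition~\ref{prop:thhz}, its mod-$p$ Hurewicz image in $H_{2p-2}(\ell;\bF_p)$ vanishes, so $\bar\alpha_*(v) = 0$.

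For $\bar\alpha_*(d\log v) = d\log v$, I would appeal to the naming convention from Theorem~\ref{thm:thhelldv}: the generator $d\log v \in V(1)_1 \THH(\ell, D(v))$ is the unique lift of the infinite cycle $d\log v \in E^\infty_{0,1}$ coming from the $H_\circledast(B^{\rep}(D(v))_{h\cJ})$ tensor factor of the $E^2$-term. The edge homomorphism $E^2_{0,*} \to V(1)_* \THH(\ell, D(v))$ of this K\"unneth spectral sequence factors through $\bar\alpha_*$, precomposed with the map $H_\circledast(B^{\rep}(D(v))_{h\cJ}) \to E^2_{0,*}$ obtained by tensoring with $1 \in V(1)_*\THH(\ell)$. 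The hardest step is verifying this last identification of $\bar\alpha_*$ with the edge map, which requires checking that the K\"unneth spectral sequence is natural with respect to the structural map $\bS^\cJ[B^{\rep} D(v)] \to \THH(\ell, D(v))$; once this is established, the assertion follows, and the one-dimensionality of $V(1)_1 \THH(\ell, D(v))$ eliminates any ambiguity in how $d\log v$ is named.
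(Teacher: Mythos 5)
Your proposal is correct and takes essentially the same route as the paper, which observes that $\bar\alpha_*$ is induced by the canonical $H$-algebra map $H \wedge \bS^\cJ[B^{\rep} D(v)] \to H \wedge_\ell \THH(\ell, D(v))$ and therefore factors through the edge homomorphism of the K\"unneth spectral sequence in Theorem~\ref{thm:thhelldv}; both the algebra structure and the two formulas then drop out. Your argument for $\bar\alpha_*(v)=0$ is slightly different in emphasis: instead of reading off that $v$ dies in $E^2_{0,*}$ (after the change-of-rings, $v$ acts trivially on $V(1)_*\THH(\ell)$ by the Adams filtration argument already used in Proposition~\ref{prop:thhz}), you chase $v$ through the pushout diagram and invoke the Adams filtration on $H_*(\ell;\bF_p)$ directly, which is essentially the same input packaged without passing through the spectral sequence -- a valid and slightly more elementary variant. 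One small cavil: the sentence claiming ``the edge homomorphism $\ldots$ factors through $\bar\alpha_*$, precomposed with $\ldots$'' is stated backwards; what you want (and implicitly use) is that $\bar\alpha_*$ equals the composite of $H_\circledast(B^{\rep}(D(v))_{h\cJ}) \to E^2_{0,*}$ with the edge homomorphism $E^2_{0,*} \twoheadrightarrow E^\infty_{0,*} \hookrightarrow V(1)_*\THH(\ell, D(v))$.
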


\begin{proof}
The adjoint pre-log structure homomorphism
\[
\bar\alpha_* \: P(v) \otimes E(d\log v) \otimes C_*
\longto E(\lambda_1, d\log v) \otimes P(\kappa_1)
\]
is induced by the canonical $H$-algebra map
\[
H \wedge \bS^\cJ[B^{\rep} D(v)] \longto H \wedge_\ell \THH(\ell, D(v)) \,,
\]
and therefore
factors through the edge homomorphism of the spectral sequence in
Theorem~\ref{thm:thhelldv}. Hence it is an algebra homomorphism
satisfying $\bar\alpha_*(v) = 0$ and $\bar\alpha_*(d\log v) = d\log
v$.
\end{proof}

\section{Logarithmic \texorpdfstring{$\THH$}{THH} of the connective complex \texorpdfstring{$K$}{K}-theory spectrum}\label{sec:log-THH-ku}
We now return to the setup of Section~\ref{sec:elog-etaleness} and
consider the tamely ramified map $f \: \ell \to \kup$, inducing $f_*
\: \bZ_{(p)}[v] \to \bZ_{(p)}[u]$ with $f_*(v) = u^{p-1}$ in homotopy,
and its log $\THH$-{\'e}tale extension
\[
(f, f^\flat) \: (\ell, D(v), \alpha) \to (\kup, D(u), \beta)
\]
where $\alpha \: D(v) \to \Omega^\cJ (\ell)$ and $\beta \: D(u) \to
\Omega^\cJ (\kup)$ denote the Adams and Bott pre-log structures on
$\ell$ and $\kup$, respectively, and $f^\flat(v) = u^{p-1}$. We note
that the induced map of residue ring spectra
\[
f/(f^\flat_{>0}) \:
	\ell/(D(v)_{>0}) \overset{\sim}\longto \kup/(D(u)_{>0})
\]
is a stable equivalence, with both sides equivalent to $H\bZ_{(p)}$.
Therefore Theorem~\ref{thm:localization-seq-for-Dx} provides a diagram
of homotopy cofiber sequences
\begin{equation} \label{eq:cofseq-ludv-kudu}
\xymatrix@-1pc{
\THH(\ell) \ar[r]^-{\rho} \ar[d]_f&
\THH(\ell, D(v)) \ar[r]^-{\partial} \ar[d]_{(f,f^{\flat})}&
\Sigma \THH(\bZ_{(p)}) \ar@{=}[d]\\
\THH(\kup) \ar[r]^-{\rho'} & \THH(\kup, D(u)) \ar[r]^-{\partial'} & \Sigma \THH(\bZ_{(p)})
}
\end{equation}
where the left hand square is strictly commutative and the right hand square is homotopy commutative.  We have proved in
Theorem~\ref{thm:ell-ku-Du-Dv-logthh-etale} that the induced map
\begin{equation} \label{eq:lkuthhetale}
\kup \wedge_\ell \THH(\ell, D(v))
	\overset{\sim}\longto \THH(\kup, D(u))
\end{equation}
is a stable equivalence, where here the smash product over $\ell$ should be understood as a left derived smash product.

The stable equivalence $\THH(\kup, D(u)) \to \THH(\kup,
j_*\!\GLoneJof(KU_{(p)}))$ from
Proposition~\ref{prop:logification-of-D-of-x},
Proposition~\ref{prop:circle-operator-values} below, and the following
theorem imply Theorem~\ref{thm:V(1)THHkuGLoneJofKU-intro} from the
introduction.

\begin{theorem} \label{thm:thhkudu}
There is an algebra isomorphism
\[
V(1)_* \THH(\kup, D(u)) \cong P_{p-1}(u) \otimes E(\lambda_1, d\log u)
        \otimes P(\kappa_1)
\]
with $|u| = 2$, $|\lambda_1| = 2p-1$, $|d\log u| = 1$ and $|\kappa_1|
= 2p$. The cobase change
equivalence~\eqref{eq:lkuthhetale} induces the isomorphism
\[
P_{p-1}(u) \otimes E(\lambda_1, d\log v) \otimes P(\kappa_1)
\overset{\cong}\longto
P_{p-1}(u) \otimes E(\lambda_1, d\log u) \otimes P(\kappa_1)
\]
that maps $d\log v$ to $-d\log u$ and preserves the other terms.
The adjoint pre-log structure map
\[
\bar\beta \: \bS^\cJ[B^{\rep}(D(u))] \longto \THH(\kup, D(u))
\]
induces an algebra homomorphism
\[
\bar\beta_* \: H_\circledast(B^{\rep}(D(u))_{h\cJ})
	\longto V(1)_* \THH(\kup, D(u))
\]
satisfying $\bar\beta_*(u) = u$ and $\bar\beta_*(d\log u) = d\log
u$. The suspension operator satisfies $\sigma(u) = u \cdot d\log u$ and $\sigma(d\log u) = 0$.
\end{theorem}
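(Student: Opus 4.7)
The plan is to transport the computation of $V(1)_* \THH(\ell, D(v))$ from Theorem~\ref{thm:thhelldv} to $V(1)_* \THH(\kup, D(u))$ along the stable equivalence
\[
\kup \wedge_\ell \THH(\ell, D(v)) \xrightarrow{\sim} \THH(\kup, D(u))
\]
of commutative symmetric ring spectra supplied by Theorem~\ref{thm:ell-ku-Du-Dv-logthh-etale}, and then rewrite the resulting algebra in the basis adapted to the pre-log structure $D(u)$ on $\kup$.

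For the additive and multiplicative structure, I would first use that $\kup$ is a finite free $\ell$-module with basis $1, u, \dots, u^{p-2}$ together with $V(1) \wedge \ell \simeq H$ to deduce $V(1)_* \kup \cong P_{p-1}(u)$ (the relation $u^{p-1}=0$ coming from $f(v)=u^{p-1}$ and the vanishing of $v$ in $V(1)_*\ell = \bF_p$). Since $V(1) \wedge X \simeq H \wedge_\ell X$ for any $\ell$-module~$X$, and smashing over $\ell$ preserves the free $\ell$-module decomposition of $\kup$, a direct K\"unneth computation identifies
\[
V(1)_*\bigl(\kup \wedge_\ell \THH(\ell, D(v))\bigr) \cong P_{p-1}(u) \otimes_{\bF_p} V(1)_*\THH(\ell, D(v))
\]
as commutative $\bF_p$-algebras, and by Theorem~\ref{thm:thhelldv} the right hand side equals $P_{p-1}(u) \otimes E(\lambda_1, d\log v) \otimes P(\kappa_1)$.

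To re-express this in the claimed $d\log u$-basis, I would exploit that $f^\flat(v) = u^{p-1}$, so the induced map $B^{\rep}(D(v)) \to B^{\rep}(D(u))$ carries $d\log v$ to $d\log(u^{p-1}) = (p-1)\,d\log u \equiv -d\log u \pmod p$; the additivity $d\log(xy) = d\log x + d\log y$ can be read off from the identification of $B^{\rep}(\bN_0)$-homology in the proof of Proposition~\ref{prop:homology-B^cyDx}. By naturality of the adjoint pre-log structure maps $\bar\alpha$ and $\bar\beta$ along the square~\eqref{eq:l-ku-Dv-Du-pre-log}, the class $d\log v \in V(1)_*\THH(\ell, D(v))$ maps to $-\bar\beta_*(d\log u)$ in $V(1)_*\THH(\kup, D(u))$. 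Setting $d\log u := \bar\beta_*(d\log u) \in V(1)_1 \THH(\kup, D(u))$ and substituting $d\log v \mapsto -d\log u$ gives the desired presentation and simultaneously the asserted form of the cobase change isomorphism. The identities $\bar\beta_*(u) = u$ and $\bar\beta_*(d\log u) = d\log u$ then follow from the template of Lemma~\ref{lem:prelogalpha}: $\bar\beta_*$ factors through the edge homomorphism of the $\Tor$ spectral sequence~\eqref{eq:logthhamss} with $A = \kup$ and $M = D(u)$, hence is an algebra map, and the two classes are detected in filtration zero by their natural images (the only difference from the $\ell$ case being that $u$ has Adams filtration~$0$ in $V(1)_* \kup = P_{p-1}(u)$, so is no longer killed).

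Lastly, for the suspension operator: $\sigma$ is a derivation on $V(1)_*$ of a commutative ring spectrum whose adjoint $S^1$-action is multiplicative, which applies to $\THH(\kup, D(u))$, and $\bar\beta$ is $S^1$-equivariant by the cyclic structure on the replete bar construction, so $\sigma \circ \bar\beta_* = \bar\beta_* \circ \sigma$. Combined with the computations $\sigma(u) = u \cdot d\log u$ and $\sigma(d\log u) = 0$ in $H_\circledast(B^{\rep}(D(u))_{h\cJ})$ from Proposition~\ref{prop:homology-B^cyDx}, this yields $\sigma(u) = u \cdot d\log u$ and $\sigma(d\log u) = 0$ in $V(1)_*\THH(\kup, D(u))$. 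The main obstacle I anticipate is the bookkeeping in the third paragraph: justifying that the K\"unneth decomposition respects the commutative ring structure (so the transported presentation really is an algebra isomorphism), and carefully chasing the naturality of $d\log$-classes through the square~\eqref{eq:l-ku-Dv-Du-pre-log} to pin down the sign producing $d\log v \mapsto -d\log u$.
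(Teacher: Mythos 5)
Your proposal is correct and follows essentially the same route as the paper: transport $V(1)_*\THH(\ell,D(v))$ along the log-\'etale cobase change of Theorem~\ref{thm:ell-ku-Du-Dv-logthh-etale} via the collapsing $\Tor$/K\"unneth spectral sequence over $\pi_*\ell$, then use naturality along $(f,f^\flat)$ and $f^\flat(v)=u^{p-1}$ to trade $d\log v$ for $(p-1)\,d\log u=-d\log u$, and identify $\bar\beta_*(u)$ and $\bar\beta_*(d\log u)$ by an edge-homomorphism argument analogous to Lemma~\ref{lem:prelogalpha} (the paper achieves the same identifications by chasing $u$ in $\cJ$-degree~$2$ and $d\log v$ in $\cJ$-degree~$0$ around explicit commutative squares, but the content is the same). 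The one genuinely different step is the suspension operator: you compute $\sigma(u)$ and $\sigma(d\log u)$ by commuting $\sigma$ past the $S^1$-equivariant map $\bar\beta$ and feeding in the values from Proposition~\ref{prop:homology-B^cyDx}, which is clean and avoids any connectivity estimate; the paper instead routes the computation through the repletion map $\rho'$ and the $(2p-3)$-connected comparison $V(1)\to H$, and must therefore flag a caveat modulo $\alpha_1\in\pi_3 V(1)$ at $p=3$ that your version sidesteps (given that $\bar\beta_*(u)=u$ and $\bar\beta_*(d\log u)=d\log u$ have already been established). Both routes yield the same conclusion; yours is a modest simplification of that step, and the concern you flag about multiplicativity of the K\"unneth decomposition is dispatched by the fact that the $\Tor$ spectral sequence for a pushout of commutative ring spectra converges multiplicatively.
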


\begin{proof}
The $\Tor$ spectral sequence
\begin{align*}
E^2_{**} &= \Tor^{\pi_* \ell}_{**}
	(\pi_* \kup, V(1)_* \THH(\ell, D(v))) \\
	&\Longrightarrow V(1)_* \THH(\kup, D(u))
\end{align*}
collapses at the $E^2$-term in filtration~$0$, since $\pi_* \kup$
is a free $\pi_* \ell$-module and $V(1)_* \THH(\ell, D(v))$ is a trivial
$\pi_* \ell$-module.  The term $P_{p-1}(u)$ arises as \[\pi_* \kup
\otimes_{\pi_* \ell} \bF_p \cong P(u) \otimes_{P(v)} \bF_p\,.\]
Chasing the class $u$ in $\cJ$-degree~$2$ around the commutative square
\[
\xymatrix@-1pc{
H_\circledast(D(u)_{h\cJ}) \ar[rr] \ar[d]
	&& V(1)_* \kup \ar[d] \\
H_\circledast(B^{\rep}(D(u)_{h\cJ})) \ar[rr]^-{\bar\beta_*}
	&& V(1)_* \THH(\kup, D(u))
}
\]
we see that $\bar\beta_*(u) = u$.

Chasing the class $d\log v$ in $\cJ$-degree~$0$ around the commutative
square
\[
\xymatrix@-1pc{
H_\circledast (B^{\rep}(D(v))_{h\cJ}) \ar[rr]^-{\bar\alpha_*} \ar[d]_-{f^\flat_\circledast}
	&& V(1)_* \THH(\ell, D(v)) \ar[d]^-{f_*} \\
H_\circledast (B^{\rep}(D(u))_{h\cJ}) \ar[rr]^-{\bar\beta_*}
	&& V(1)_* \THH(\kup, D(u))
}
\]
we find that $f^\flat_\circledast(d\log v) = (p-1) d\log u = -d\log u$
maps under $\bar\beta_*$ to the image of $\bar\alpha_*(d\log v)
= d\log v$ under $f_*$.  Hence we can trade $d\log u$ for
$d\log v$ as a generator in $V(1)_* \THH(\kup, D(u))$,
giving the asserted formulas. 

The $(2p-3)$-connected map $V(1) \to H$ induces a commutative
diagram
\[
\xymatrix@-1pc{
H_\circledast (B^{\cy}(D(u))_{h\cJ}) \ar[d]_{\rho_\circledast}
& V(1)_* \bS^{\cJ}[B^{\cy}(D(u))] \ar[d]_{\rho_*}
	\ar[l] \ar[r]^-{\bar\beta_*}
& V(1)_* \THH(ku_{(p)}) \ar[d]^{\rho'_*} \\
H_\circledast (B^{\rep}(D(u))_{h\cJ})
& V(1)_* \bS^{\cJ}[B^{\rep}(D(u))] \ar[l] \ar[r]^-{\bar\beta_*}
& V(1)_* \THH(ku_{(p)}, D(u)) \,,
}
\]
where the left hand horizontal arrows are $(2p-3)$-connected.  By
Proposition~\ref{prop:homology-B^cyDx} we have $\rho_\circledast(du) =
u \cdot d\log u = \sigma(u)$ at the left hand side.  By the
connectivity estimate we have $\rho_*(du) = u \cdot d\log u =
\sigma(u)$ in the middle (modulo $\alpha_1 \in \pi_3 V(1)$ for $p=3$),
which implies that $\rho'_*(du) = u \cdot d\log u = \sigma(u)$ at the
right hand side. By Proposition~\ref{prop:homology-B^cyDx} we also
have $\sigma(d\log u) = 0$ at the left hand side, so by the same
connectivity estimate we have $\sigma(d\log u) = 0$ in the middle and
at the right hand side.
\end{proof}

In the next lemma we consider the square obtained by applying the left derived cobase change along $\ell\to ku_{(p)}$ to the right hand square in \eqref{eq:cofseq-ludv-kudu}. We write 
\[
\chi\colon \kup \wedge_\ell \Sigma\THH(\bZ_{(p)}) \to \Sigma\THH(\bZ_{(p)})
\]
 for the induced map of $\kup$-modules.

\begin{lemma} \label{lem:chipartial}
The homotopy commutative square
\[
\xymatrix@-1pc{
\kup \wedge_\ell \THH(\ell, D(v))
	\ar[rr]^-{1\wedge\partial} \ar[d]_-{\sim}
	&& \kup \wedge_\ell \Sigma\THH(\bZ_{(p)}) \ar[d]^{\chi} \\
\THH(\kup, D(u)) \ar[rr]^-{\partial'} && \Sigma\THH(\bZ_{(p)})
}
\]
induces a commutative square
\[
\xymatrix@-1pc{
P_{p-1}(u) \otimes V(1)_* \THH(\ell, D(v))
	\ar[rr]^-{1\otimes\partial_*} \ar[d]_-{\cong}^-{\cdot}
	&& P_{p-1}(u) \otimes V(1)_{*-1} \THH(\bZ_{(p)}) \ar[d]^{\chi_*} \\
V(1)_* \THH(\kup, D(u)) \ar[rr]^-{\partial'_*}
	&& V(1)_{*-1} \THH(\bZ_{(p)})
}
\]
of $P_{p-1}(u) \tensor V(1)_* \THH(\ell)$-modules, where $\chi_*(1 \otimes x) = x$ and
$\chi_*(u^k \otimes x) = 0$ for all $x \in V(1)_{*-1} \THH(\bZ_{(p)})$
and $k\ge1$.
Hence $\partial'_*(1 \cdot y) = \partial_*(y)$ and $\partial'_*(u^k
\cdot y) = 0$ for all $y \in V(1)_* \THH(\ell, D(v))$ and $k\ge1$.
\end{lemma}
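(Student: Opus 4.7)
The plan is to identify the displayed square as (up to the equivalence~\eqref{eq:lkuthhetale}) obtained by applying $\kup\sm_\ell(-)$ to the top row of the right hand square in~\eqref{eq:cofseq-ludv-kudu} and then mapping down to the bottom row via the $\kup$-module structures. The map $\chi$ is the action of $\kup$ on $\Sigma\THH(\bZ_{(p)})$: the target is a $\THH(\kup)$-module by Theorem~\ref{thm:localization-seq-for-Dx}, hence a $\kup$-module, and under this action the bottom right arrow of~\eqref{eq:cofseq-ludv-kudu} is a $\kup$-module map. Homotopy commutativity of the square in the lemma then follows from homotopy commutativity of~\eqref{eq:cofseq-ludv-kudu} together with naturality.

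Next, I would identify the vertical maps on $V(1)$-homotopy via a K\"unneth spectral sequence argument. Since $\pi_*\kup = \bZ_{(p)}[u]$ is a free $\pi_*\ell = \bZ_{(p)}[v]$-module with basis $\{1,u,\dots,u^{p-2}\}$ (because $f_*(v)=u^{p-1}$), the K\"unneth spectral sequence for $\kup\sm_\ell Y$ collapses in filtration zero, so that for any $\ell$-module $X$
\[
V(1)_*(\kup\sm_\ell X) \cong \pi_*\kup \otimes_{\pi_*\ell} V(1)_* X .
\]
Using $V(1)\sm\ell\simeq H$, every positive-degree element of $\pi_*\ell$, and in particular $v$, acts as zero on $V(1)_*X$, so the right hand side simplifies to $P_{p-1}(u)\otimes V(1)_*X$. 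Specializing to $X=\THH(\ell,D(v))$ recovers the left vertical isomorphism of the statement, and specializing to $X=\Sigma\THH(\bZ_{(p)})$ identifies the upper right corner as $P_{p-1}(u)\otimes V(1)_{*-1}\THH(\bZ_{(p)})$. The top horizontal map becomes $1\otimes\partial_*$ by naturality.

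The crux is to compute $\chi_*$. The $\THH(\kup)$-module structure on $\Sigma\THH(\bZ_{(p)})$ arising from Theorem~\ref{thm:localization-seq-for-Dx} is, by the construction of the localization cofiber sequence in~\cite{RSS_LogTHH-I}, induced from the canonical $\THH(\bZ_{(p)})$-module structure via the truncation-induced map $\THH(\kup)\to\THH(\bZ_{(p)})$. Consequently the $\kup$-action on $\Sigma\THH(\bZ_{(p)})$ factors through $\kup\to H\bZ_{(p)}$, so the Bott class $u\in\pi_2\kup$ acts as zero on $V(1)_*\Sigma\THH(\bZ_{(p)})$ while the unit acts as the identity. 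This yields $\chi_*(1\otimes x)=x$ and $\chi_*(u^k\otimes x)=0$ for $k\geq 1$, and chasing around the commutative square then gives the stated formulas for $\partial'_*$. The main obstacle is verifying that the $\THH(\kup)$-module structure on the cofiber of Theorem~\ref{thm:localization-seq-for-Dx} does factor through the Postnikov truncation $\kup\to H\bZ_{(p)}$; this is a compatibility check with the explicit construction in~\cite{RSS_LogTHH-I} rather than a computation, but it is what makes the calculation work.
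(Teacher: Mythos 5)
Your overall strategy matches the paper's: identify the square as a cobase change of the right-hand square of~\eqref{eq:cofseq-ludv-kudu}, use a collapsing K\"unneth/Tor spectral sequence to identify $V(1)_*(\kup\wedge_\ell X)$ with $P_{p-1}(u)\otimes V(1)_*X$, and reduce the lemma to showing that $u$ acts trivially through $\chi$. The K\"unneth reduction is fine and agrees with what the paper uses elsewhere (e.g.\ in Theorem~\ref{thm:thhkudu}).

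However, the load-bearing step---that the $u$-action on $\Sigma\THH(\bZ_{(p)})$ vanishes---is exactly where your argument stops short. You assert that the $\THH(\kup)$-module structure on the cofiber ``is, by the construction\ldots, induced from the canonical $\THH(\bZ_{(p)})$-module structure via the truncation-induced map $\THH(\kup)\to\THH(\bZ_{(p)})$,'' and then flag this as a compatibility check you have not carried out. That compatibility check is the entire content of the lemma. The paper's proof does not take the statement about the Postnikov truncation as a black box; instead it identifies $\chi$ concretely. Using Proposition~\ref{prop:Dv-Du-ell-ku-hty-cocartesian} together with \cite[Proposition~6.11]{RSS_LogTHH-I}, the map $\chi$ is obtained by cobase change (along $\bS^{\cJ}[B^{\cy}(D(v))]\to\THH(\ell)$ and $\bS^{\cJ}[B^{\cy}(D(u))]\to\THH(\kup)$) from the map
\[
\bS^\cJ[D(u)]\wedge_{\bS^\cJ[D(v)]}\Sigma\,\bS^\cJ[B^{\cy}_{\{0\}}(D(v))]\longrightarrow \Sigma\,\bS^\cJ[B^{\cy}_{\{0\}}(D(u))],
\]
and at this level the $\bS^\cJ[D(u)]$-module structure on $\Sigma\,\bS^\cJ[B^{\cy}_{\{0\}}(D(u))]$ visibly factors through the projection $\bS^\cJ[D(u)]\to\bS^\cJ[D(u)_{\{0\}}]$ onto the $\cJ$-degree~$0$ part, since the target is concentrated in $\cJ$-degree~$0$. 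This is what kills $u$ (which lives in positive $\cJ$-degree) and gives $\chi_*(u^k\otimes x)=0$ for $k\ge1$. So your intuition about \emph{why} the answer comes out this way is right, but you need the explicit model of $\chi$ at the level of $\bS^\cJ[\cdot]$-spectra to close the argument; without it, the claim that the module structure on the cofiber factors through the truncation is an unverified assumption rather than a proof.
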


\begin{proof}
It follows from Proposition~\ref{prop:Dv-Du-ell-ku-hty-cocartesian} and~\cite[Proposition~6.11]{RSS_LogTHH-I} that the map~$\chi$ may be obtained from the analogous map
\[
\bS^\cJ[D(u)] \wedge_{\bS^\cJ[D(v)]} \Sigma \bS^\cJ[B^{\cy}_{\{0\}} (D(v))]
	\longto \Sigma \bS^\cJ[B^{\cy}_{\{0\}}(D(u))] \,,
\]
by cobase change along \[\bS^{\cJ}[B^{\cy}(D(v))]\to \THH(\ell)\quad\text{ and }\quad\bS^{\cJ}[B^{\cy}(D(u))]\to \THH(\kup)\] (compare to the proof of Theorem~\ref{thm:ell-ku-Du-Dv-logthh-etale}). 
Clearly the $\bS^\cJ[D(u)]$-module structure on $\Sigma \bS^\cJ[B^{\cy}_{\{0\}}(D(u))]$
factors through the projection $\bS^\cJ[D(u)] \to \bS^\cJ[D(u)_{\{0\}}]$
to the $\cJ$-degree~$0$ part which implies that $\chi_*(u^k \otimes x) = 0$
for all $k\ge1$.  The remaining claims follow by naturality.
\end{proof}

Let $\tau' \colon \THH(\bZ_{(p)}) \to \THH(\kup)$ denote the homotopy fiber map of the map~$\rho'$ in \eqref{eq:cofseq-ludv-kudu}. As usual, we write $(u) = \bF_p\{u^k \mid 1 \le k \le p-2\}$ for the
ideal in $P_{p-1}(u)$ generated by $u$.

\begin{lemma}
There is a long exact sequence
\begin{multline*}
\dots \longto V(1)_* \THH(\bZ_{(p)}) \overset{\tau'_*}\longto
	V(1)_* \THH(\kup) \\
\overset{\rho'_*}\longto V(1)_* \THH(\kup, D(u))
	\overset{\partial'_*}\longto V(1)_{*-1} \THH(\bZ_{(p)})
	\longto \dots
\end{multline*}
of $V(1)_* \THH(\kup)$-modules, where $\rho'_*$ is an algebra
homomorphism. The resulting short exact sequence
\[
0 \to \ker(\rho'_*) \longto
	V(1)_* \THH(\kup) \longto \im(\rho'_*) \to 0
\]
is a square-zero extension of $P_{p-1}(u) \otimes V(1)_* \THH(\ell)$-algebras,
where
\[
\ker(\rho'_*) \cong E(\lambda_1) \otimes P(\mu_2) \{\lambda_2\}
\]
and
\[
\im(\rho'_*) = E(\lambda_1) \otimes P(\kappa_1^p)
\ \oplus\ %
(u) \otimes E(\lambda_1, d\log u) \otimes P(\kappa_1) \,.
\]
\end{lemma}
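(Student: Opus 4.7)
The long exact sequence is obtained by applying $V(1)_*$ to the homotopy cofiber sequence
\[
\THH(\bZ_{(p)}) \overset{\tau'}{\longto} \THH(\kup) \overset{\rho'}{\longto} \THH(\kup, D(u)) \overset{\partial'}{\longto} \Sigma\THH(\bZ_{(p)})
\]
obtained after rotation from Theorem~\ref{thm:localization-seq-for-Dx}. The map $\rho'$ is a map of commutative symmetric ring spectra, so $\rho'_*$ is an algebra homomorphism; the remaining maps $\tau'$ and $\partial'$ are $\THH(\kup)$-module maps, which makes the sequence one of $V(1)_*\THH(\kup)$-modules.

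To identify $\im(\rho'_*) = \ker(\partial'_*)$, I would use the isomorphism $V(1)_*\THH(\kup, D(u)) \cong P_{p-1}(u) \otimes V(1)_*\THH(\ell, D(v))$ from Theorem~\ref{thm:thhkudu} together with Lemma~\ref{lem:chipartial}: the latter shows that $\partial'_*$ vanishes on the $u^k$-summands for $k \geq 1$ and agrees with $\partial_*$ on the $u^0$-summand. By Lemma~\ref{lem:v1exseqelldv}, $\ker(\partial_*) = \im(\rho_*) = E(\lambda_1) \otimes P(\kappa_1^p)$ is the subalgebra generated by $\lambda_1$ and $\kappa_1^p \doteq \mu_2$, which yields the displayed formula for $\im(\rho'_*)$.

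For $\ker(\rho'_*) = \im(\tau'_*)$, I would extend~\eqref{eq:cofseq-ludv-kudu} leftwards to a map of homotopy cofiber sequences. Since the rightmost vertical map is the identity, the induced map $\phi$ on fibers is a self-equivalence of $\THH(\bZ_{(p)})$, and the resulting relation $\tau' \circ \phi \simeq f \circ \tau$ yields $\im(\tau'_*) = f_*(\im(\tau_*))$ on $V(1)$-homotopy. By Lemma~\ref{lem:v1exseqelldv}, $\im(\tau_*) = \ker(\rho_*) = E(\lambda_1) \otimes P(\mu_2)\{\lambda_2\}$, and $f_*$ is injective on this submodule because $\ker(\tau'_*) = \ker(\tau_*)$ (from the identity $\im(\partial'_*) = \im(\partial_*)$ and exactness of both long exact sequences). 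The resulting $V(1)_*\THH(\ell)$-module $\ker(\rho'_*)$ becomes a $P_{p-1}(u) \otimes V(1)_*\THH(\ell)$-module with $u$ acting trivially, since $\tau'$ is a $\kup$-module map and $u \in V(1)_*\kup$ acts by zero on $V(1)_*\THH(\bZ_{(p)})$ via the projection $\kup \to H\bZ_{(p)}$.

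Finally, the square-zero property reduces to the algebraic observation that $\lambda_2^2 = 0$ in the exterior algebra $E(\lambda_1,\lambda_2)$, so $\ker(\rho_*)^2 = 0$ already in $V(1)_*\THH(\ell)$. Since $f_*$ is a ring homomorphism and $\ker(\rho'_*) = f_*(\ker(\rho_*))$, this transfers to $\ker(\rho'_*)^2 = 0$ in $V(1)_*\THH(\kup)$. The main care in the argument will be in aligning the $\ell$- and $\kup$-long exact sequences via~\eqref{eq:cofseq-ludv-kudu} to establish $\tau'_* = f_* \circ \tau_*$ (up to the self-equivalence $\phi_*$); once this is in place, the rest of the proof is a straightforward combination of Lemma~\ref{lem:v1exseqelldv} with elementary algebra.
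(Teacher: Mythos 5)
Your proposal follows essentially the same route as the paper's proof, and the main computational steps (using Lemmas~\ref{lem:chipartial} and \ref{lem:v1exseqelldv} to identify $\im(\rho'_*) = \ker(\partial'_*)$ and $\ker(\rho'_*)$) are correct. The paper is in fact terser than you are: it identifies $\ker(\rho'_*)$ abstractly via the chain of isomorphisms $\ker(\rho'_*) \cong \cok(\partial'_*) = \cok(\partial_*) \cong \ker(\rho_*)$, where the middle equality is immediate from $\im(\partial'_*)=\im(\partial_*)$ (Lemma~\ref{lem:chipartial}), and then simply observes that this $\bF_p$-vector space is a square-zero ideal in $V(1)_* \THH(\ell)$, hence also in $V(1)_*\THH(\kup)$.

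The extra care you take in constructing the fiber map $\phi$ and arguing $\ker(\rho'_*) = f_*(\ker(\rho_*))$ is actually a reasonable thing to want, since the paper's final ``hence'' implicitly needs this compatibility for the square-zero claim to transfer. However, your reasoning that ``the induced map $\phi$ on fibers is a self-equivalence'' because ``the rightmost vertical map is the identity'' is too quick: completing the map of the left (strictly commuting) square of \eqref{eq:cofseq-ludv-kudu} to a map of distinguished triangles produces \emph{some} $\phi$ with $\Sigma\phi\circ\partial \simeq \partial'\circ (f,f^\flat)$, but the axiom TR3 does not guarantee that this $\Sigma\phi$ agrees with the identity map already chosen to fill the right square of \eqref{eq:cofseq-ludv-kudu}; uniqueness of fill-ins fails in triangulated categories. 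The cleaner way to get $\phi = \id$ is to invoke the naturality of the cofiber sequence in Theorem~\ref{thm:localization-seq-for-Dx}, which identifies the cofiber map with $\Sigma\THH$ of the induced map on Postnikov truncations $\ell[0,2p-2\rangle \to \kup[0,2\rangle$; this is an equivalence $H\bZ_{(p)} \to H\bZ_{(p)}$ inducing the identity on $\pi_0$, and the induced map on fibers is the loop of this. With that fix, your argument that $\tau'_* = f_*\circ\tau_*$ and the subsequent deduction of injectivity of $f_*$ on $\im(\tau_*)$ is fine, and the square-zero conclusion follows as you say.
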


\begin{proof}
  By exactness $\im(\rho'_*) = \ker(\partial'_*)$, which by
  Lemma~\ref{lem:v1exseqelldv} and Lemma~\ref{lem:chipartial} is the
  direct sum of $\ker(\partial_*) = E(\lambda_1) \otimes
  P(\kappa_1^p)$ and $(u) \otimes E(\lambda_1, d\log u) \otimes
  P(\kappa_1)$, inside \[V(1)_* \THH(\kup, D(u)) = P_{p-1}(u) \otimes
  E(\lambda_1, d\log u) \otimes P(\kappa_1).\]

  Similarly $\ker(\rho'_*) \cong \cok(\partial'_*) = \cok(\partial_*)
  \cong \ker(\rho_*)$ equals $E(\lambda_1) \otimes P(\mu_2)
  \{\lambda_2\}$. This is a square-zero ideal inside $V(1)_*
  \THH(\ell) = E(\lambda_1, \lambda_2) \otimes P(\mu_2)$, hence is
  also a square-zero ideal inside $V(1)_* \THH(\kup)$.
\end{proof}

The following is essentially copied from \cite{Ausoni_THH-ku}*{Definition~9.13}.

\begin{definition}
Assume $p\ge3$, and let $\Theta_*$ be the graded-commutative unital
$P_{p-1}(u) \otimes P(\mu_2)$-algebra with generators
\[
\begin{cases}
a_i & 0 \le i \le p-1 \,, \\
b_j & 1 \le j \le p-1 \,,
\end{cases}
\]
and relations
\[
\begin{cases}
u^{p-2} a_i = 0 & 0 \le i \le p-2 \,, \\
u^{p-2} b_j = 0 & 1 \le j \le p-1 \,, \\
b_i b_j = u b_{i+j} & i+j \le p-1 \,, \\
a_i b_j = u a_{i+j} & i+j \le p-1 \,, \\
b_i b_j = u b_{i+j-p} \mu_2 & i+j \ge p \,, \\
a_i b_j = u a_{i+j-p} \mu_2 & i+j \ge p \,, \\
a_i a_j = 0 & 0 \le i, j \le p-1 \,.
\end{cases}
\]
By convention $b_0 = u$. The degrees of the generators
are $|a_i| = 2pi + 3$ and $|b_j| = 2pj + 2$.
\end{definition}

\begin{theorem}\cite{Ausoni_THH-ku}*{Theorem~9.15} \label{thm:ausoni}
Let $p\ge 3$.  There is an isomorphism
\[
V(1)_* \THH(\kup) \cong E(\lambda_1) \otimes \Theta_*
\]
of $P_{p-1}(u) \otimes E(\lambda_1) \otimes P(\mu_2)$-algebras.  Under
this identification,
$\rho'_*(a_i) = u \cdot d\log u \cdot \kappa_1^i$ 
	for $0 \le i \le p-1$,
$\rho'_*(b_j) = u \kappa_1^j$ for $1 \le j \le p-1$
and $\rho'_*(\mu_2) = \kappa_1^p$, all in $\im(\rho'_*)$,
and $\lambda_2$ in $\ker(\rho'_*)$ maps to
	a unit in $\bF_p$ times $u^{p-2} a_{p-1}$.
\end{theorem}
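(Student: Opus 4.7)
The strategy is to package the preceding lemma's data (long exact sequence, explicit $\ker(\rho'_*)$ and $\im(\rho'_*)$, and square-zero extension) together with the explicit multiplicative structure of $V(1)_*\THH(\kup, D(u))$ from Theorem~\ref{thm:thhkudu} into an algebra isomorphism $\phi\colon E(\lambda_1) \otimes \Theta_* \to V(1)_*\THH(\kup)$. I would begin by choosing lifts in $V(1)_*\THH(\kup)$: take $\phi(u)$ the image of $u \in V(1)_*\kup$, take $\phi(\lambda_1)$ the unique preimage of $\lambda_1 \in V(1)_*\THH(\kup, D(u))$ under $\rho'_*$ (unique because $\ker(\rho'_*)$ vanishes in degree $2p-1$), and pick lifts $\phi(a_i), \phi(b_j), \phi(\mu_2)$ of $u \cdot d\log u \cdot \kappa_1^i$, $u \kappa_1^j$ and $\kappa_1^p$ in $\im(\rho'_*)$. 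Set $\phi(\lambda_2) = \tau'_*(\epsilon_1 \mu_1^{p-1})$; this is a nonzero generator of $\ker(\rho'_*)|_{2p^2-1} = \bF_p\{\lambda_2\}$ because $\im(\partial'_*)$ in that degree is spanned only by $\lambda_1 \mu_1^{p-1}$.

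Second, to verify that $\phi$ is a well-defined algebra homomorphism, I would show that every defining relation $r$ of $\Theta_*$ satisfies $\phi(r) = 0$. Since $\rho'_*(\phi(r)) = 0$ by direct inspection in the polynomial algebra $V(1)_*\THH(\kup, D(u))$, one has $\phi(r) \in \ker(\rho'_*) = E(\lambda_1) \otimes P(\mu_2)\{\lambda_2\}$, which is concentrated in degrees $2p^2(k+1) - 1$ and $2p^2(k+1) + 2p - 2$ for $k \ge 0$. An arithmetic check (the mod-$p$ congruences producing a match would force $p \mid 3$) shows that for $p \ge 5$ every such relation has $|\phi(r)|$ outside this support and hence $\phi(r) = 0$ automatically. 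For $p = 3$, a handful of relations---namely $a_i b_j = u a_{i+j}$ at $i+j=2$ and $b_i b_j = u \mu_2 b_{i+j-p}$ at $i+j=p$---require additional care via refined lift choices or explicit $\sigma$-derivation computations.

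Third, to promote $\phi$ to an isomorphism I would match $P(\mu_2) \cong P(\kappa_1^p)$-ranks: both source and target are free of rank $4(p-1)^2$. The source admits a basis of normal-form monomials $\lambda_1^\varepsilon u^k g$ with $g \in \{1, a_0, \dots, a_{p-1}, b_1, \dots, b_{p-1}\}$ and $k$ constrained by the $u^{p-2}$-annihilation relations; the target is the direct sum of the rank-$2$ kernel $E(\lambda_1) \otimes P(\mu_2)\{\lambda_2\}$ and the rank-$(4(p-1)^2 - 2)$ image $E(\lambda_1) \otimes P(\kappa_1^p) \oplus (u) \otimes E(\lambda_1, d\log u) \otimes P(\kappa_1)$. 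Since $\phi$ is surjective onto $\im(\rho'_*)$ by construction, it is an isomorphism precisely when $\lambda_2$ lies in its image, i.e., when the element $u^{p-2} a_{p-1} \in V(1)_*\THH(\kup)$---which the degree analysis forces to equal $c \lambda_2$ for some $c \in \bF_p$---satisfies $c \ne 0$.

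The main obstacle is this nonvanishing of $c$, the essential hidden multiplicative extension encoded by the long exact sequence. I would approach it through the commutative square of Section~\ref{sec:elog-etaleness}: from $\tau' = f \circ \tau$ and $\tau_*(\epsilon_1 \mu_1^{p-1}) \doteq \lambda_2$ in $V(1)_*\THH(\ell)$ (Lemma~\ref{lem:v1exseqelldv}) one obtains $\phi(\lambda_2) \doteq f_*(\lambda_2)$, and one then traces $f_*(\lambda_2)$ through the cobase-change equivalence $\THH(\kup, D(u)) \simeq \kup \wedge_\ell \THH(\ell, D(v))$ of Theorem~\ref{thm:ell-ku-Du-Dv-logthh-etale} together with the $\sigma$-derivation structure of Theorem~\ref{thm:thhkudu} to identify $f_*(\lambda_2)$ with a unit times $u^{p-2} a_{p-1}$. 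Should the direct trace prove inconclusive, one can instead invoke Ausoni's original B\"okstedt-spectral-sequence argument \cite{Ausoni_THH-ku}*{\S9} to conclude $c \ne 0$; the rank match then yields the asserted algebra isomorphism.
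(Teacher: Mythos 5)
Your lifting and rank-matching strategy matches the paper closely: both you and the paper construct $\theta \colon E(\lambda_1)\otimes\Theta_* \to V(1)_*\THH(\kup)$ by lifting the explicit formulas for $\im(\rho'_*)$, verify that the relations in $\Theta_*$ hold automatically in low degrees when $p\ge5$, compute that both sides are free of rank $4(p-1)^2$ over $P(\mu_2)\cong P(\kappa_1^p)$, and reduce to establishing the single hidden multiplicative extension, namely that $\theta(u^{p-2}a_{p-1}) = du\cdot z$ is a \emph{nonzero} multiple of the generator of $\ker(\rho'_*)$ in degree $2p^2-1$. (One small conceptual slip: you write $\phi(\lambda_2)=\tau'_*(\epsilon_1\mu_1^{p-1})$, but $\lambda_2$ is not a generator of $E(\lambda_1)\otimes\Theta_*$. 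Pinning that class down in advance is not part of the definition of $\phi$; the statement you want is precisely that $\phi(u^{p-2}a_{p-1})$ hits it.)

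The genuine gap is in how you propose to resolve that extension. You suggest tracing $f_*(\lambda_2)$ through the cobase-change equivalence $\kup\wedge_\ell\THH(\ell,D(v))\simeq\THH(\kup,D(u))$ together with the $\sigma$-derivation. This does not give a path to the answer: the cobase change only tells you about $V(1)_*\THH(\kup,D(u))$, and the product $du\cdot z$ maps to $u^{p-1}\cdot d\log u\cdot\kappa_1^{p-1}=0$ there since $u^{p-1}=0$ in $P_{p-1}(u)$. Likewise the long exact sequence already tells you that $f_*(\lambda_2)=\tau'_*(\epsilon_1\mu_1^{p-1})$ generates $\ker(\rho'_*)_{2p^2-1}$; what it does \emph{not} tell you is whether the product $du\cdot z$ coincides with that generator or vanishes. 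No further information about $V(1)_*\THH(\kup)$ can be extracted from either of your two maps without some independent multiplicative input. Your fallback of invoking Ausoni's B\"okstedt-spectral-sequence argument would of course close the gap, but it defeats the stated purpose of giving an independent proof. The paper resolves the extension by a different, self-contained device: it runs the $\Tor$ spectral sequence $\Tor^{H_*\bS^\cJ[B^{\cy}(D(u))]}_{**}(V(1)_*\THH(\kup),H_*\bS^\cJ[B^{\rep}(D(u))])\Rightarrow V(1)_*\THH(\kup,D(u))$ \emph{backwards}, feeding in the partially known additive structure of $V(1)_*\THH(\kup)$ (up to the ambiguity $du\cdot z\doteq\lambda_2$ or $du\cdot z=0$), and shows that the second alternative would force at least two linearly independent classes to survive to $E^\infty$ in total degree $2p^2-1$, contradicting the known abutment $V(1)_{2p^2-1}\THH(\kup,D(u))=\bF_p\{\lambda_1\kappa_1^{p-1}\}$. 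That contradiction argument is the essential new ingredient and is absent from your proposal.
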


\begin{definition}
The assignments
\begin{align*}
\lambda_1 &\longmapsto \lambda_1 \\
a_i &\longmapsto u \cdot d\log u \cdot \kappa_1^i \\
b_j &\longmapsto u \kappa_1^j \\
\mu_2 &\longmapsto \kappa_1^p
\end{align*}
define a surjective algebra homomorphism $\bar\theta \: E(\lambda_1)
\otimes \Theta_* \to \im(\rho'_*)$, with kernel
$E(\lambda_1) \otimes P(\mu_2)\{u^{p-2} a_{p-1}\}$.
For $p\ge5$ it
lifts uniquely to an algebra homomorphism
\[
\theta \: E(\lambda_1) \otimes \Theta_* \longto V(1)_* \THH(\kup) \,,
\]
because $\ker(\rho'_*) = 0$ in the degrees of the algebra generators and
relations of $\Theta_*$.  (This is not true for $p=3$.)  For brevity,
let $z = u^{p-2} \kappa_1^{p-1}$.
\end{definition}

In the remainder of this section we will give a new proof of Ausoni's
theorem, in the cases $p\ge5$.  The only obstruction to carrying
out the same proof for $p=3$ is the need to check that $\bar\theta$
admits a multiplicative lift~$\theta$, i.e., that the relations in
degree~$|\lambda_2| = 2p^2-1 = 17$ and $|\lambda_1\lambda_2| = 2p^2+2p-2 =
22$ in $\Theta_*$ also hold in $V(1)_* \THH(ku_{(p)})$.  We do not carry
out this check.  A similar complication occurs in Ausoni's original
calculation for $p=3$, cf.~\cite[p.~1305]{Ausoni_THH-ku}.

\begin{lemma} \label{lem:theta_low}
For $p\ge5$, the homomorphism $\theta$ is an isomorphism in degrees $* <
|\lambda_2| = 2p^2-1$, and maps $\ker(\bar\theta) = E(\lambda_1) \otimes
P(\mu_2) \{u^{p-2} a_{p-1}\}$ to $\ker(\rho'_*) \cong E(\lambda_1)
\otimes P(\mu_2) \{\lambda_2\}$.  In degree~$2p^2-1$ it maps $u^{p-2}
a_{p-1}$ to the product $du \cdot z$, which is a multiple of $\lambda_2$.
It is an isomorphism in all degrees if and only if this multiple is nonzero.
\end{lemma}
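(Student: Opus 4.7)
The plan is to exploit the factorization $\rho'_*\circ\theta=\bar\theta$, which holds on generators by construction and hence globally by multiplicativity, together with the short exact sequence $0\to\ker(\rho'_*)\to V(1)_*\THH(\kup)\to\im(\rho'_*)\to 0$. The key numerical input is that both $\ker(\bar\theta)=E(\lambda_1)\otimes P(\mu_2)\{u^{p-2}a_{p-1}\}$ and $\ker(\rho'_*)\cong E(\lambda_1)\otimes P(\mu_2)\{\lambda_2\}$ are free rank-one $E(\lambda_1)\otimes P(\mu_2)$-modules whose generators sit in the same degree $2p^2-1$.

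First I would observe that in degrees $*<2p^2-1$ both kernels vanish, so $\bar\theta$ is an isomorphism onto $\im(\rho'_*)$, which in this range agrees with $V(1)_*\THH(\kup)$ via $\rho'_*$; the triangle $\rho'_*\theta=\bar\theta$ then forces $\theta$ to be an isomorphism in these degrees. Multiplicativity of $\theta$, combined with $\rho'_*\theta=\bar\theta$, also immediately gives the inclusion $\theta(\ker(\bar\theta))\subseteq\ker(\rho'_*)$ in every degree.

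Next I would compute $\theta(u^{p-2}a_{p-1})$ explicitly. The $\Theta_*$-relation $a_0 b_{p-1}=u\,a_{p-1}$ gives $u^{p-2}a_{p-1}=u^{p-3}\cdot a_0\, b_{p-1}$, so
\[
\theta(u^{p-2}a_{p-1}) = u^{p-3}\,\theta(a_0)\,\theta(b_{p-1}).
\]
Here $\theta(a_0)\in V(1)_{3}\THH(\kup)$ lifts $\bar\theta(a_0)=u\cdot d\log u=\sigma(u)$ along $\rho'_*$, and $u^{p-3}\theta(b_{p-1})\in V(1)_{2p^2-4}\THH(\kup)$ lifts $u^{p-2}\kappa_1^{p-1}$. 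Both degrees are strictly smaller than $2p^2-1$, so $\ker(\rho'_*)$ vanishes there and both lifts are unique. Using that $\rho'_*$ commutes with the suspension operator (Theorem~\ref{thm:thhkudu}), the first lift is $du:=\sigma(u)$, and the second is the element denoted $z$. Consequently $\theta(u^{p-2}a_{p-1})=du\cdot z$, which is forced to lie in $\ker(\rho'_*)$ in degree $2p^2-1$, and this $\bF_p$-line is spanned by $\lambda_2$. Thus $du\cdot z=c\lambda_2$ for a unique $c\in\bF_p$.

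Finally I would apply the five-lemma to the map of short exact sequences
\[
\xymatrix@-1pc{
0\ar[r] & \ker(\bar\theta)\ar[r]\ar[d]^-{\theta|} & E(\lambda_1)\otimes\Theta_*\ar[r]^-{\bar\theta}\ar[d]^-{\theta} & \im(\rho'_*)\ar[r]\ar@{=}[d] & 0\\
0\ar[r] & \ker(\rho'_*)\ar[r] & V(1)_*\THH(\kup)\ar[r]^-{\rho'_*} & \im(\rho'_*)\ar[r] & 0
}
\]
of $E(\lambda_1)\otimes P(\mu_2)$-modules. The restriction $\theta|$ sends the free generator $u^{p-2}a_{p-1}$ to $c\lambda_2$, so by $E(\lambda_1)\otimes P(\mu_2)$-linearity it is multiplication by $c$ between rank-one free modules, hence an isomorphism precisely when $c\ne 0$. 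The five-lemma then upgrades this to the global statement that $\theta$ is an isomorphism in all degrees if and only if $c\ne 0$. The main obstacle in this plan is the identification step pinning down $\theta(u^{p-2}a_{p-1})$ as $du\cdot z$; its rigor rests on the uniqueness-of-lift argument, which in turn exploits the $(2p^2-1)$-connectivity of $\ker(\rho'_*)$ and the hypothesis $p\ge 5$.
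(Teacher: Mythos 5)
Your argument is correct and follows the same route as the paper's: factor through $\bar\theta$, observe that both kernels are free rank-one $E(\lambda_1)\otimes P(\mu_2)$-modules generated in degree $2p^2-1$, use the relation $u^{p-2}a_{p-1}=u^{p-3}a_0 b_{p-1}$ together with uniqueness of lifts below degree $2p^2-1$ to identify $\theta(u^{p-2}a_{p-1})=du\cdot z$, and conclude by the five lemma. The paper's proof is considerably more terse (it leaves the five-lemma step and the uniqueness-of-lift argument implicit), but the content is the same.
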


\begin{proof}
The lift $\theta$ is an isomorphism if and only if it maps $\ker(\bar\theta)$
isomorphically to $\ker(\rho'_*)$, which happens if and only if $\theta$
maps $u^{p-2} a_{p-1} = a_0 \cdot u^{p-3} b_{p-1}$ to a nonzero multiple of
$\lambda_2$.  Since $a_0$ maps to $u \cdot d\log u = \rho'_*(du)$
and $u^{p-3} b_{p-1}$ maps to $z = u^{p-2} \kappa_1^{p-1}$, this
happens if and only if $du \cdot z \doteq \lambda_2$ in $V(1)_* \THH(\kup)$.
The only alternative is that $du \cdot z = 0$.
\end{proof}

\begin{proposition}
Consider the case $A = \kup$ and $M = D(u)$ of the spectral
sequence~\eqref{eq:logthhamss}.  It is an algebra spectral sequence
\begin{align*}
E^2_{**} &= \Tor^{H_\circledast (B^{\cy}(D(u))_{h\cJ})}_{**}
	(V(1)_* \THH(\kup), H_\circledast (B^{\rep}(D(u))_{h\cJ})) \\
&\Longrightarrow V(1)_* \THH(\kup, D(u)) \,,
\end{align*}
where
\begin{align*}
H_\circledast (B^{\cy}(D(u))_{h\cJ}) &= P(u) \otimes E(du) \otimes C_* \\
H_\circledast (B^{\rep}(D(u))_{h\cJ}) &= P(u) \otimes E(d\log u) \otimes C_* \\
V(1)_* \THH(\kup) &\cong E(\lambda_1) \otimes \Theta_* \\
V(1)_* \THH(\kup, D(u)) &= P_{p-1}(u) \otimes
	E(\lambda_1, d\log u) \otimes P(\kappa_1) \,.
\end{align*}
Here
\[
E^2_{**} = \Tor^{E(du)}_{**}
	(E(\lambda_1) \otimes \Theta_*, E(d\log u))
\]
is the tensor product of $E(\lambda_1, d\log u) \otimes P(\mu_2)$ with
\begin{multline*}
\bF_p \{ 1, u^{p-3} b_{p-1}, u^i b_j
	\mid 0 \le i \le p-4, 0 \le j \le p-1\}
\\ \oplus\ %
\Gamma([du]) \{u^{p-3} b_{j-1}, a_j \mid 1 \le j \le p-1\}
\end{multline*}
where $b_0 = u$.
There are nontrivial differentials
\[
d^2(\gamma_k[du] \cdot u^{p-3} b_{j-1}) \doteq \gamma_{k-2}[du] \cdot a_j
\]
modulo $(\lambda_1,d\log u),$ for all $k\ge2$ and $1 \le j \le p-1$, leaving $E^3_{**} = E^\infty_{**}$
equal to the tensor product of $E(\lambda_1, d\log u) \otimes P(\mu_2)$ with
\begin{multline*}
\bF_p \{ 1, u^{p-3} b_{p-1}, u^i b_j
        \mid 0 \le i \le p-4, 0 \le j \le p-1\}
\\ \oplus\ %
E([du]) \{u^{p-3} b_{j-1} \mid 1 \le j \le p-1\} \,.
\end{multline*}
Here $u$ represents $u$, $[du] u^{p-3} b_{j-1}$ represents $\kappa_1^j$
up to a unit in $\bF_p$, for $1 \le j \le p-1$, and there are
multiplicative extensions $u \cdot [du] u^{p-3} b_{j-1} \doteq b_j$
and $[du]u^{p-3}b_{j-1} \cdot [du]u^{p-3}b_{k-1} \doteq \mu_2$ for $1
\le j,k \le p-1$ with $j+k=p$.
\end{proposition}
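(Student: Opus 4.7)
The strategy parallels the proof of Theorem~\ref{thm:thhelldv}, with Ausoni's Theorem~\ref{thm:ausoni} supplying the richer input $V(1)_*\THH(\kup) = E(\lambda_1) \otimes \Theta_*$ in place of the simpler $V(1)_*\THH(\ell)$. First, Proposition~\ref{prop:homology-B^cyDx} applied to $x = u$ identifies the input algebras $H_\circledast(B^{\cy}(D(u))_{h\cJ})$ and $H_\circledast(B^{\rep}(D(u))_{h\cJ})$ as stated. Under the edge map $H_*\bS^{\cJ}[B^{\cy}(D(u))] \to V(1)_*\THH(\kup)$, the Bott class $u$ acts as $u \in \Theta_*$, the class $du$ acts as $\sigma(u) = a_0$ (using the suspension formula of Theorem~\ref{thm:thhkudu} together with Ausoni's identification $\rho'_*(a_0) = u \cdot d\log u$), and positive-degree elements of $C_*$ act trivially by the Adams filtration argument in the proof of Proposition~\ref{prop:thhz}. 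A change-of-rings, exploiting that $P(u) \otimes E(d\log u) \otimes C_*$ is free over $P(u) \otimes C_*$, reduces the $E^2$-term to $\Tor^{E(du)}_{**}(E(\lambda_1) \otimes \Theta_*, E(d\log u))$ with $du$ acting as $a_0$ on the left and as zero on the right.

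Next I would compute this $\Tor$ from the standard periodic Koszul resolution of $E(d\log u)$ over $E(du)$. Since $a_0^2 = 0$ in $\Theta_*$ (by $a_ia_j = 0$), the complex gives $M/a_0M \otimes E(d\log u)$ in homological degree~$0$ and $\ker(a_0)/\im(a_0) \otimes E(d\log u)$ in every higher degree. Direct bookkeeping with the $\Theta_*$-relations $a_ib_j = ua_{i+j}$ (for $i+j \leq p-1$) and $u^{p-2}a_i = 0$ (for $0 \leq i \leq p-2$) identifies $M/a_0M$ as the span of Part~A together with the filtration-$0$ generators $\{u^{p-3}b_{j-1}, a_j : 1 \leq j \leq p-1\}$ of Part~B, and identifies $\ker(a_0)/\im(a_0)$ as spanned by that same latter set: the classes $u^{p-3}b_{j-1}$ lie in $\ker(a_0)$ via $a_0 \cdot u^{p-3}b_{j-1} = u^{p-2}a_{j-1} = 0$, and the $a_j$ via $a_0 a_j = 0$, while neither class is in $\im(a_0)$ by a degree parity argument on $\Theta_*$. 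Tensoring with the divided powers $\Gamma([du])$ from the resolution then yields the stated $E^2$-term.

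Finally, I would pin down the differentials and extensions using the fact that the abutment $V(1)_*\THH(\kup, D(u)) = P_{p-1}(u) \otimes E(\lambda_1, d\log u) \otimes P(\kappa_1)$ is already known from Theorem~\ref{thm:thhkudu}. A rank count in each total degree shows that the $a_j$ (for $1 \leq j \leq p-1$) and the higher divided powers $\gamma_k[du] \cdot u^{p-3}b_{j-1}$ (for $k \geq 2$) cannot all survive to $E^\infty$; the only differential pattern consistent with the filtration, total degree, and Leibniz rule is the asserted $d^2(\gamma_k[du] \cdot u^{p-3}b_{j-1}) \doteq \gamma_{k-2}[du] \cdot a_j$ modulo $(\lambda_1, d\log u)$, which cancels precisely these classes. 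The resulting $E^\infty$ matches the abutment, and the multiplicative extensions $u \cdot [du]u^{p-3}b_{j-1} \doteq b_j$ and $[du]u^{p-3}b_{j-1} \cdot [du]u^{p-3}b_{k-1} \doteq \mu_2$ (for $j+k = p$) are forced by the known algebra structure on the abutment together with the identification of $[du]u^{p-3}b_{j-1}$ with $\kappa_1^j$ up to a unit (matching total degree~$2pj$). The principal obstacle is verifying that the $d^2$-differentials are genuinely nonzero rather than some other rank-consistent pattern; I would address this either by a Massey-product analysis of the secondary relation underlying $a_0 \cdot u^{p-3}b_{j-1} = u^{p-2}a_{j-1} = 0$ in $\Theta_*$, or by naturality of the spectral sequence along the map induced by $\ell \to \kup$, using the cobase change equivalence $\kup \wedge_\ell \THH(\ell, D(v)) \simeq \THH(\kup, D(u))$ from Theorem~\ref{thm:ell-ku-Du-Dv-logthh-etale}.
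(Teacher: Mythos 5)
Your proposal diverges from the paper's proof at a point that is not merely cosmetic but logically essential, and it also leaves the differential argument unfinished.

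The paper's proof of this proposition is deliberately written so as to \emph{not} assume Ausoni's Theorem~\ref{thm:ausoni}.  The paper explicitly states just before Lemma~\ref{lem:theta_low} that the goal of the remainder of the section is to give a new proof of Ausoni's theorem for $p\ge 5$, and the proof of this proposition is the place where that happens.  Accordingly, the only thing the paper allows itself to use about $V(1)_*\THH(\kup)$ is Lemma~\ref{lem:theta_low}, which says $\theta$ is an isomorphism in degrees $* < |\lambda_2| = 2p^2-1$.  The paper then computes the $E^2$-term only in bidegrees with internal degree $t < 2p^2-1$, identifies the $d^2$-differentials in that range by a filtration-degree induction forced by the even-concentrated abutment, and then devotes the bulk of the proof to excluding the alternative hypothesis $du\cdot z = 0$: one assumes $du\cdot z = 0$, writes down what the $E^2$-term would then have to look like in total degrees $2p^2-2$, $2p^2-1$ and $2p^2$, and derives a contradiction with the known abutment.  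Only at that point does one conclude $du\cdot z \doteq \lambda_2$, and hence (via Lemma~\ref{lem:theta_low}) that $\theta$ is an isomorphism in all degrees, and hence that the stated $E^2$-term and differentials are correct globally.  Your proposal, by contrast, takes $V(1)_*\THH(\kup)\cong E(\lambda_1)\otimes\Theta_*$ as an input from the start.  Within the paper's framework that is circular: the subsequent ``Proof of Theorem~\ref{thm:ausoni} for $p\ge 5$'' cites exactly this proposition.  Even granting Ausoni's theorem as an external input, your proposal skips the entire part of the paper's argument that constitutes the new content.

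Separately, your proposal acknowledges that pinning down the $d^2$-differentials is ``the principal obstacle'' and offers two unexplored ways out (a Massey-product analysis, or naturality along $\ell\to\kup$).  Neither is carried out, and the naturality route is itself delicate because the abutment $V(1)_*\THH(\kup,D(u))$ was computed in Theorem~\ref{thm:thhkudu} precisely by cobase change from $\ell$, so one must take care not to run in a circle.  The paper instead closes this gap directly: the abutment $P_{p-1}(u)\otimes E(\lambda_1,d\log u)\otimes P(\kappa_1)$ is free over $E(\lambda_1,d\log u)$ on even-degree monomial generators, so the odd-degree $E^2$-classes $\gamma_i[du]\cdot a_j$ must all die; an induction over increasing total degree and decreasing filtration degree then forces the asserted pairing $d^2(\gamma_k[du]\cdot u^{p-3}b_{j-1}) \doteq \gamma_{k-2}[du]\cdot a_j$ and rules out any other rank-consistent pattern, first in internal degrees $t<2p^2-1$ and then, once $du\cdot z\ne 0$ is established, in all degrees.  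Your Koszul-resolution computation of the $\Tor$ group from the $a_0$-action is a reasonable way to organize the $E^2$-term and agrees with the paper's change-of-rings step, but it is the only part of the proposal that lines up with the paper's proof.
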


\begin{proof}
We first rewrite the $E^2$-term to clarify its dependence on $V(1)_*
\THH(\kup)$, using change-of-rings:
\begin{align*}
E^2_{**} &= \Tor^{P(u) \otimes E(du) \otimes C_*}_{**}
	(V(1)_* \THH(\kup), P(u) \otimes E(d\log u) \otimes C_*) \\
&\cong \Tor^{E(du)}_{**}
	(V(1)_* \THH(\kup), E(d\log u)) \,.
\end{align*}
Here $E(du)$ acts trivially on $E(d\log u)$, so only the $E(du)$-module
structure on $V(1)_* \THH(\kup)$ is relevant.  We know that $\theta$ is
an isomorphism in degrees $* < |\lambda_2| = 2p^2-1 < |\mu_2| = 2p^2$,
so in this range of degrees $V(1)_* \THH(\kup)$ is additively isomorphic
to the tensor product of $E(\lambda_1)$ and the $E(du)$-module
\begin{multline*}
E(du) \{ 1, u^{p-3} b_{p-1}, u^i b_j \mid 0 \le i \le p-4, 0 \le j \le p-1 \}
\\ \oplus\ %
\bF_p \{ u^{p-3} b_{j-1}, a_j \mid 1 \le j \le p-1 \} \,,
\end{multline*}
with $u^{p-3} b_{p-1}$ corresponding to $z$ in degree~$2p^2-4$.

It follows that the $E^2$-term is as stated in the proposition in
bidegrees $(s,t)$ with $t < 2p^2-1$.  In particular, it is isomorphic
to a free module over $E(\lambda_1, d\log u)$ in this range of degrees.
Since the abutment is a free module over $E(\lambda_1, d\log u)$ on
the monomial generators of $P_{p-1}(u) \otimes P(\kappa_1)$, which
are concentrated in even degrees, it follows that the $E^2$-classes
$\gamma_i[du] \cdot a_j$ in (odd) total degrees less than $2p^2-1$ cannot
survive to the $E^\infty$-term.  By induction over increasing total
degrees~$s+t$, and over decreasing filtration degrees~$s$ within each total
degree, it follows that there must be nonzero $d^2$-differentials as
stated in the proposition, cancelling the $E(\lambda_1, d\log u)$-module
generators $\gamma_k[du] \cdot u^{p-3} b_{j-1}$ and $\gamma_{k-2}[du]
\cdot a_j$ for $k\ge2$ and $1\le j\le p-1$, in total degrees $s+t <
2p^2-1$.

If $du \cdot z \doteq \lambda_2$, so that $\theta$ is an isomorphism, the
same inductive argument continues to cover all total degrees, extending
linearly over $P(\mu_2)$.  The $E^\infty$-term is then concentrated
in filtration degrees $0 \le s \le 1$, and the final claims of the
proposition follow directly from a comparison with the known abutment.

It remains to exclude the alternative, namely that $du \cdot z = 0$.
In that hypothetical case, $V(1)_* \THH(\kup)$ would be isomorphic in
degrees $* \le 2p^2-1$ to the $E(du)$-module
\begin{multline*}
E(du) \{ 1, u^i b_j \mid 0 \le i \le p-4, 0 \le j \le p-1 \}
\\ \oplus\ %
\bF_p \{ z, \lambda_2, u^{p-3} b_{j-1}, a_j \mid 1 \le j \le p-1 \} \,.
\end{multline*}
This would lead to a modified $E^2$-term, where the summand $\bF_p
\{u^{p-3} b_{p-1}\}$ is replaced by $\Gamma([du]) \{z, \lambda_2\}$,
at least in internal degrees $t \le 2p^2-1$.

By our initial analysis for $t < 2p^2-1$, all $E^2$-generators in total
degree $s+t = 2p^2-2$ support linearly independent $d^2$-differentials.
Hence all generators in total degree $2p^2-1$ must be $d^2$-cycles.

Under this assumption, the $E^2$-generators in total degree $s+t = 2p^2-1$
would be $\lambda_2$ in filtration $s=0$, the $m = (p-1)/2$ classes
\[
\{ \gamma_{pi-1}[du] \cdot a_{p-2i} \mid 1 \le i \le m \} \,,
\]
and some $\lambda_1$- or $d\log u$-multiples of classes in lower total
degrees.

The $E^2$-generators in total degree $s+t = 2p^2$ and filtration
degree $s\ge2$ would be the $m$ classes
\[
\{ \gamma_{pi+1}[du] \cdot u^{p-3} b_{p-2i-1} \mid 1 \le i \le m \} \,,
\]
and some $\lambda_1$- or $d\log u$-multiples of classes in lower total
degrees.

By our previous analysis, the $\lambda_1$- or $d\log u$-multiples in total
degree $2p^2$ and filtration degree $\ge2$ support $d^2$-differentials
that kill all but one of the $\lambda_1$- or $d\log u$-multiples in
total degree $2p^2-1$, leaving only $\lambda_1 [du] u^{p-3} b_{p-2}$
in filtration degree~$s=1$.  This is $\lambda_1$ times the permanent
cycle $[du] u^{p-3} b_{p-2}$ representing $\kappa_1^{p-1}$.

The only remaining $d^r$-differentials affecting total degree $2p^2-1$,
for $r\ge2$, are those mapping from the $m$ classes $\gamma_{pi+1}[du]
\cdot u^{p-3} b_{p-2i-1}$ to the $m+2$ classes $\gamma_{pi-1}[du]
\cdot a_{p-2i}$, $\lambda_2$ and $\lambda_1 [du] u^{p-3} b_{p-2}$.
It follows that at least $(m+2)-m = 2$ linearly independent classes are
left at the $E^\infty$-term in total degree~$2p^2-1$.  This contradicts
the fact that the abutment in degree~$2p^2-1$ is generated by the single
class $\lambda_1 \kappa_1^{p-1}$.

This contradiction eliminates the possibility that $du \cdot z = 0$.
Hence $\theta$ is an isomorphism in all degrees, and the structure
of the spectral sequence is as asserted.
\end{proof}

\begin{proof}[Proof of Theorem~\ref{thm:ausoni} for $p\ge5$]
In view of Lemma~\ref{lem:theta_low}, and the conclusion from the proof
of the previous proposition that $du \cdot z \doteq \lambda_2$,
the claims of the theorem have now been verified for $p\ge5$.
\end{proof}

\begin{proposition}\phantomsection \label{prop:circle-operator-values}
\begin{enumerate}[(i)]
\item In $V(1)_* \THH(\ell) \cong E(\lambda_1, \lambda_2) \otimes
P(\mu_2)$ the suspension operator satisfies $\sigma(\lambda_1) = 0$,
$\sigma(\lambda_2) = 0$ and $\sigma(\mu_2) = 0$. 

\item In $V(1)_* \THH(\ell, D(v)) \cong E(\lambda_1, d\log v) \otimes
P(\kappa_1)$ the suspension operator satisfies $\sigma(\lambda_1) = 0$,
$\sigma(d\log v) = 0$ and $\sigma(\kappa_1) = \kappa_1 \cdot d\log v$.

\item In $V(1)_* \THH(ku_{(p)}) \cong E(\lambda_1) \otimes \Theta_*$ the
suspension operator satisfies $\sigma(\lambda_1) = 0$, $\sigma(a_i) =
0$, $\sigma(b_j) = (1-j) a_j$ and $\sigma(\mu_2) = 0$.

\item In $V(1)_* \THH(ku_{(p)}, D(u)) \cong P_{p-1}(u) \otimes E(\lambda_1,
d\log u) \otimes P(\kappa_1)$ the suspension operator satisfies $\sigma(u)
= u \cdot d\log u$, $\sigma(\lambda_1) = 0$, $\sigma(d\log u) = 0$
and $\sigma(\kappa_1) = - \kappa_1 \cdot d\log u$.
\end{enumerate}
\end{proposition}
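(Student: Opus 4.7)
The four parts are tightly linked: the crucial nontrivial value $\sigma(\kappa_1) = \kappa_1 \cdot d\log v$ from part~(ii) propagates to parts~(iii) and~(iv) via the repletion map~$\rho'$, the ring spectrum map~$f$, and the étale cobase change equivalence $\kup \wedge_\ell \THH(\ell, D(v)) \simeq \THH(\kup, D(u))$ of Theorem~\ref{thm:ell-ku-Du-Dv-logthh-etale}. Throughout I use that $\sigma$ is a derivation on each of the four algebras and is natural with respect to every map in play. For part~(i), $\sigma(\lambda_1) = 0$ and $\sigma(\mu_2) = 0$ hold by degree count: in $V(1)_* \THH(\ell) = E(\lambda_1, \lambda_2) \otimes P(\mu_2)$ the degrees $2p$ and $2p^2+1$ are zero. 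The only candidate for $\sigma(\lambda_2)$ is a multiple of~$\mu_2$; applying $\rho_*$ together with $\rho_*(\lambda_2) = 0$ and $\rho_*(\mu_2) \doteq \kappa_1^p \ne 0$ in $V(1)_* \THH(\ell, D(v))$ from Lemma~\ref{lem:v1exseqelldv} forces this scalar to vanish.

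For part~(ii), $\sigma(\lambda_1) = 0$ transfers from part~(i) by naturality of~$\rho$, while $\sigma(d\log v) = 0$ holds for degree reasons (degree~$2$ is empty in $E(\lambda_1, d\log v) \otimes P(\kappa_1)$). In degree $2p+1$ the only candidate for $\sigma(\kappa_1)$ is a scalar multiple of $d\log v \cdot \kappa_1$. To identify the scalar I apply the connecting map $\partial_*$ of the homotopy cofiber sequence of Theorem~\ref{thm:localization-seq-for-Dx}, which commutes with~$\sigma$ up to the suspension sign. Lemma~\ref{lem:v1exseqelldv} gives $\partial_*(\kappa_1) \doteq \epsilon_1$ modulo~$\lambda_1$, so $\partial_*(\sigma(\kappa_1)) \doteq \sigma(\epsilon_1)$ modulo $\sigma(\lambda_1)$. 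B{\"o}kstedt's classical computation of the circle action on $V(1)_* \THH(\bZ_{(p)})$ gives $\sigma(\epsilon_1) \doteq \mu_1$ and $\sigma(\lambda_1) = 0$, hence $\partial_*(\sigma(\kappa_1)) \doteq \mu_1$. Since $\partial_*(d\log v \cdot \kappa_1) \doteq \mu_1$ by Lemma~\ref{lem:v1exseqelldv}(4), the scalar is a unit in~$\bF_p$, and a suitable normalization of~$\kappa_1$ yields the stated formula.

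Part~(iv) follows next: $\sigma(u) = u \cdot d\log u$ and $\sigma(d\log u) = 0$ are part of Theorem~\ref{thm:thhkudu}, while $\sigma(\lambda_1) = 0$ and $\sigma(\kappa_1) = -\kappa_1 \cdot d\log u$ transfer from part~(ii) along the étale equivalence, using that the algebra isomorphism of Theorem~\ref{thm:thhkudu} fixes $\lambda_1$ and $\kappa_1$ but sends $d\log v$ to $-d\log u$. For part~(iii), $\sigma(\lambda_1) = 0$ follows from part~(i) along~$f_*$, and the remaining values are obtained by applying the derivation~$\sigma$ to the identities $\rho'_*(b_j) = u \kappa_1^j$, $\rho'_*(a_i) = u \cdot d\log u \cdot \kappa_1^i$, $\rho'_*(\mu_2) = \kappa_1^p$ from Theorem~\ref{thm:ausoni} and substituting the values from part~(iv). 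A short calculation using $(d\log u)^2 = 0$ and characteristic~$p$ yields $\rho'_*(\sigma(b_j)) = (1-j)\rho'_*(a_j)$, $\rho'_*(\sigma(a_i)) = 0$, and $\rho'_*(\sigma(\mu_2)) = p \kappa_1^{p-1} \sigma(\kappa_1) = 0$. These identities lift uniquely modulo $\ker(\rho'_*) = E(\lambda_1) \otimes P(\mu_2)\{\lambda_2\}$, and an inspection of degrees shows this kernel vanishes in each of the relevant degrees $2pj+3$, $2pi+4$ (for $1 \le j \le p-1$ and $0 \le i \le p-1$), and $2p^2+1$, so the formulas hold on the nose.

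\textbf{Main obstacle.} The crux of the proposition is the determination of $\sigma(\kappa_1)$ in part~(ii). Degree counting reduces the problem to a single coefficient in~$\bF_p$, but its non-vanishing cannot be detected inside the logarithmic $\THH$ framework alone; it genuinely requires external input, namely B{\"o}kstedt's computation that the circle action on $V(1)_* \THH(\bZ_{(p)})$ realizes the mod-$p$ Bockstein relation $\sigma(\epsilon_1) \doteq \mu_1$. Once this single non-vanishing is in hand, everything else cascades from the derivation property, degree counting, and the structural maps linking the four algebras.
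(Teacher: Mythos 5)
Your argument takes a genuinely different route from the paper's at the crux of the proposition, but contains one identifiable gap.

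\textbf{Comparison of approaches.} The paper establishes the key nonvanishing of $\sigma(\kappa_1)$ in case~\textit{(iv)} first, using the fact that Ausoni's class $b \in V(1)_{2p+2}K(ku)$ maps under the trace to $b_1 \in V(1)_{2p+2}\THH(\kup)$, so that $\sigma(b_1) = 0$; combined with $\rho'_*(b_1) = u\kappa_1$, the derivation property and multiplication by $u$ pin down $\sigma(\kappa_1) = -\kappa_1 \cdot d\log u$ \emph{including the sign}, and cases \textit{(ii)} and \textit{(iii)} then follow by naturality. You instead start from case~\textit{(ii)}, pushing $\sigma(\kappa_1)$ forward along the connecting map $\partial_*$ of Theorem~\ref{thm:localization-seq-for-Dx} and comparing with the circle action on $V(1)_*\THH(\bZ_{(p)})$; this detects the nonvanishing of $\sigma(\kappa_1)$ without the $K$-theory input, which is an appealing feature. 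Your treatment of $\sigma(\lambda_2)=0$ in case~\textit{(i)} via $\rho_*$ is also a clean alternative to the paper's Hurewicz-image argument. Cases~\textit{(iii)} and \textit{(iv)} are then handled by naturality with respect to $\rho'$ and $(f, f^\flat)$ much as in the paper, with a correct degree-check that $\ker(\rho'_*)$ does not interfere.

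\textbf{Gap.} Your $\partial_*$-argument establishes only that $\sigma(\kappa_1) = c\,\kappa_1 \cdot d\log v$ for some unit $c \in \bF_p^\times$, because both $\partial_*(\kappa_1) \doteq \epsilon_1 \pmod{\lambda_1}$ and $\partial_*(d\log v \cdot \kappa_1) \doteq \mu_1$ in Lemma~\ref{lem:v1exseqelldv} are only stated up to units. You claim that ``a suitable normalization of $\kappa_1$'' fixes $c = 1$, but this cannot work: if one replaces $\kappa_1$ by $c'\kappa_1$ (or by $\kappa_1 + a\lambda_1\,d\log v$, which is the actual indeterminacy in $\kappa_1$), the relation $\sigma(\kappa_1') = c\,\kappa_1'\cdot d\log v$ still holds with the \emph{same} scalar $c$, and $d\log v$ is a fixed class from the replete bar construction, so it cannot be rescaled either. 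The scalar $c$ is therefore an invariant of the algebra with $\sigma$-operator, and your argument leaves it undetermined, whereas the paper's use of $\sigma(b_1) = 0$ and $\rho'_*(b_1) = u\kappa_1$ produces the exact value. Secondarily, the asserted formula $\sigma(\epsilon_1) \doteq \mu_1$ in $V(1)_*\THH(\bZ_{(p)})$ is plausible but is not part of B\"okstedt's theorem as usually stated, and you give no reference; since your entire detection of nonvanishing hinges on it, a citation (or a short proof, e.g.\ via Connes' $B$-operator) is needed.
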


\begin{proof}
\textit{(i)} The Hurewicz image of $\lambda_2$ in $H_*(V(1) \wedge
\THH(\ell))$ is $\sigma\bar\xi_2$, hence $\sigma(\lambda_2) = 0$. The
classes $\sigma(\lambda_1)$ and $\sigma(\mu_2)$ are zero because they
lie in trivial groups.

\textit{(iv)}   We saw that $\sigma(u) = u \cdot d\log u$ and $\sigma(d\log
u) = 0$ in Theorem~\ref{thm:thhkudu}.  The class $\sigma(\lambda_1)$
is zero by case~\textit{(i)}, via naturality with respect to
$\THH(\ell) \to \THH(ku_{(p)}, D(u))$.
Under the trace map,  Ausoni's class $b \in V(1)_{2p+2} K(ku)$ is mapped to the class $b_1 \in V(1)_{2p+2} \THH(ku_{(p)})$. Hence $\sigma(b_1) = 0$.
This can also be deduced from the formula for
Connes' $B$-operator; compare~\cite{Ausoni_THH-ku}*{Remark~3.4}
and~\cite{Ausoni_Kku}*{Lemma~6.3}.  Since $\rho'_*(b_1) = u\kappa_1$
it follows that $0 = u \cdot d\log u \cdot \kappa_1 + u \cdot
\sigma(\kappa_1)$ and $\sigma(\kappa_1) = - \kappa_1 \cdot d\log u$.

\textit{(ii)} This follows from case~\textit{(iv)} by naturality with respect to
the morphism $(f, f^\flat) \colon (\ell, D(v)) \to (ku_{(p)}, D(u))$.

\textit{(iii)} This follows from case~\textit{(iv)} by naturality with respect to the morphism 
$\rho' \colon \THH(ku_{(p)}) \to \THH(ku_{(p)}, D(u))$.
\end{proof}

\begin{bibdiv}
\begin{biblist}

\bib{Angeltveit-R_Hopf-algebra}{article}{
      author={Angeltveit, Vigleik},
      author={Rognes, John},
       title={Hopf algebra structure on topological {H}ochschild homology},
        date={2005},
        ISSN={1472-2747},
     journal={Algebr. Geom. Topol.},
      volume={5},
       pages={1223\ndash 1290 (electronic)},
         url={http://dx.doi.org/10.2140/agt.2005.5.1223},
      review={\MR{2171809 (2007b:55007)}},
}

\bib{Ausoni-R_K-Morava}{article}{
      author={Ausoni, Christian},
      author={Rognes, John},
       title={Algebraic {$K$}-theory of the first {M}orava {$K$}-theory},
        date={2012},
        ISSN={1435-9855},
     journal={J. Eur. Math. Soc. (JEMS)},
      volume={14},
      number={4},
       pages={1041\ndash 1079},
         url={http://dx.doi.org/10.4171/JEMS/326},
      review={\MR{2928844}},
}

\bib{Ausoni_THH-ku}{article}{
      author={Ausoni, Christian},
       title={Topological {H}ochschild homology of connective complex
  {$K$}-theory},
        date={2005},
        ISSN={0002-9327},
     journal={Amer. J. Math.},
      volume={127},
      number={6},
       pages={1261\ndash 1313},
  url={http://muse.jhu.edu/journals/american_journal_of_mathematics/v127/127.6%
ausoni.pdf},
      review={\MR{2183525 (2006k:55016)}},
}

\bib{Ausoni_Kku}{article}{
      author={Ausoni, Christian},
       title={On the algebraic {$K$}-theory of the complex {$K$}-theory
  spectrum},
        date={2010},
        ISSN={0020-9910},
     journal={Invent. Math.},
      volume={180},
      number={3},
       pages={611\ndash 668},
         url={http://dx.doi.org/10.1007/s00222-010-0239-x},
      review={\MR{2609252 (2011g:19009)}},
}

\bib{Bousfield-K_homotopy-limits}{book}{
      author={Bousfield, A.~K.},
      author={Kan, D.~M.},
       title={Homotopy limits, completions and localizations},
      series={Lecture Notes in Mathematics, Vol. 304},
   publisher={Springer-Verlag, Berlin-New York},
        date={1972},
      review={\MR{0365573 (51 \#1825)}},
}

\bib{Blumberg-M_loc-sequenceTHH}{misc}{
      author={Blumberg, Andrew~J.},
      author={Mandell, Michael~A.},
       title={{Localization for THH(ku) and the topological Hochschild and
  cyclic homology of Waldhausen categories}},
        date={2011},
        note={\arxivlink{1111.4003}},
}

\bib{Baker-R_numerical}{article}{
      author={Baker, Andrew},
      author={Richter, Birgit},
       title={On the {$\Gamma$}-cohomology of rings of numerical polynomials
  and {$E_\infty$} structures on {$K$}-theory},
        date={2005},
        ISSN={0010-2571},
     journal={Comment. Math. Helv.},
      volume={80},
      number={4},
       pages={691\ndash 723},
         url={http://dx.doi.org/10.4171/CMH/31},
      review={\MR{2182697 (2006i:55012)}},
}

\bib{EKMM}{book}{
      author={Elmendorf, A.~D.},
      author={Kriz, I.},
      author={Mandell, M.~A.},
      author={May, J.~P.},
       title={Rings, modules, and algebras in stable homotopy theory},
      series={Mathematical Surveys and Monographs},
   publisher={American Mathematical Society},
     address={Providence, RI},
        date={1997},
      volume={47},
        ISBN={0-8218-0638-6},
        note={With an appendix by M. Cole},
      review={\MR{MR1417719 (97h:55006)}},
}

\bib{Goodwillie-cyclic}{article}{
      author={Goodwillie, Thomas~G.},
       title={Cyclic homology, derivations, and the free loopspace},
        date={1985},
        ISSN={0040-9383},
     journal={Topology},
      volume={24},
      number={2},
       pages={187\ndash 215},
         url={http://dx.doi.org/10.1016/0040-9383(85)90055-2},
      review={\MR{793184 (87c:18009)}},
}

\bib{Hesselholt-M_local_fields}{article}{
      author={Hesselholt, Lars},
      author={Madsen, Ib},
       title={On the {$K$}-theory of local fields},
        date={2003},
        ISSN={0003-486X},
     journal={Ann. of Math. (2)},
      volume={158},
      number={1},
       pages={1\ndash 113},
         url={http://dx.doi.org/10.4007/annals.2003.158.1},
      review={\MR{1998478 (2004k:19003)}},
}

\bib{Hollender-V_modules}{article}{
      author={Hollender, J.},
      author={Vogt, R.~M.},
       title={Modules of topological spaces, applications to homotopy limits
  and {$E_\infty$} structures},
        date={1992},
        ISSN={0003-889X},
     journal={Arch. Math. (Basel)},
      volume={59},
      number={2},
       pages={115\ndash 129},
         url={http://dx.doi.org/10.1007/BF01190675},
      review={\MR{1170635 (93e:55015)}},
}

\bib{McClure-S_thh-bu}{article}{
      author={McClure, J.~E.},
      author={Staffeldt, R.~E.},
       title={On the topological {H}ochschild homology of {$b{\rm u}$}. {I}},
        date={1993},
        ISSN={0002-9327},
     journal={Amer. J. Math.},
      volume={115},
      number={1},
       pages={1\ndash 45},
         url={http://dx.doi.org/10.2307/2374721},
      review={\MR{1209233 (94d:55020)}},
}

\bib{Oka_few-cells}{article}{
      author={Oka, Shichir{\^o}},
       title={Ring spectra with few cells},
        date={1979},
     journal={Japan. J. Math. (N.S.)},
      volume={5},
      number={1},
       pages={81\ndash 100},
      review={\MR{614695 (82i:55009)}},
}

\bib{Rognes_Galois}{article}{
      author={Rognes, John},
       title={Galois extensions of structured ring spectra. {S}tably dualizable
  groups},
        date={2008},
        ISSN={0065-9266},
     journal={Mem. Amer. Math. Soc.},
      volume={192},
      number={898},
       pages={viii+137},
      review={\MR{2387923 (2009c:55007)}},
}

\bib{Rognes_TLS}{incollection}{
      author={Rognes, John},
       title={Topological logarithmic structures},
        date={2009},
   booktitle={{New Topological Contexts for Galois Theory and Algebraic
  Geometry ({BIRS} 2008)}},
      series={Geom. Topol. Monogr.},
      volume={16},
   publisher={Geom. Topol. Publ., Coventry},
       pages={401\ndash 544},
}

\bib{RSS_LogTHH-I}{article}{
   author={Rognes, John},
   author={Sagave, Steffen},
   author={Schlichtkrull, Christian},
   title={Localization sequences for logarithmic topological Hochschild
   homology},
   journal={Math. Ann.},
   volume={363},
   date={2015},
   number={3-4},
   pages={1349--1398},
   issn={0025-5831},
   review={\MR{3412362}},
}

\bib{Sagave_log-on-k-theory}{article}{
      author={Sagave, Steffen},
       title={Logarithmic structures on topological {$K$}-theory spectra},
        date={2014},
     journal={Geom. Topol.},
      volume={18},
       pages={447\ndash 490 (electronic)},
   review={\MR{3159166}},
}

\bib{Sagave_spectra-of-units}{misc}{
      author={Sagave, Steffen},
       title={Spectra of units for periodic ring spectra and group completion
  of graded {$E_{\infty}$}~spaces},
        date={2015},
        note={\arxivlink{1111.6731v3}, to appear in  Algebr. Geom. Topol.},
}

\bib{Sagave-S_diagram}{article}{
      author={Sagave, Steffen},
      author={Schlichtkrull, Christian},
       title={Diagram spaces and symmetric spectra},
        date={2012},
        ISSN={0001-8708},
     journal={Adv. Math.},
      volume={231},
      number={3-4},
       pages={2116\ndash 2193},
         url={http://dx.doi.org/10.1016/j.aim.2012.07.013},
      review={\MR{2964635}},
}

\bib{Sagave-S_graded-Thom}{misc}{
      author={Sagave, Steffen},
      author={Schlichtkrull, Christian},
       title={Virtual vector bundles and graded {T}hom spectra},
        date={2014},
        note={\arxivlink{1410.4492}},
}

\end{biblist}
\end{bibdiv}

\end{document}